\documentclass[Vancouver,Times2COL]{arxivnjd}

\usepackage{amsmath,amsfonts,amssymb,latexsym,mathtools,amsthm}
\usepackage{graphicx}
\usepackage{enumitem}
\usepackage{tikz}
\usetikzlibrary{calc,patterns,decorations.pathmorphing,decorations.markings,arrows.meta,positioning,shapes.geometric,backgrounds}
\usepackage[usenames,dvipsnames]{xcolor}
\usepackage{cite}
\usepackage{booktabs}
\usepackage{multirow}
\usepackage{makecell}
\usepackage{float}
\usepackage{algorithm}
\usepackage{algpseudocode}
\algrenewcommand{\algorithmiccomment}[1]{\hfill$\triangleright$ #1}
\usepackage{BernsteinStyle}
\allowdisplaybreaks

\newtheorem{example}{Example}
\newcommand{\exmark}{\hfill{\Large$\diamond$}}
\providecommand{\Ts}{T_\rms}
\newcommand{\Iq}{{\mathcal I}_{\rm q}}   % pitch moment of inertia; script I avoids the identity $I_n$

\articletype{Research Article}%
\received{Date Month Year}
\revised{Date Month Year}
\accepted{Date Month Year}
\journal{Int. J. Robust Nonlinear Control}
\volume{00}
\copyyear{2026}
\startpage{1}
\begin{document}
	
	\title{Iterative State- and Control-Dependent Model Predictive Control: A Jacobian-Free Formulation for Constrained Nonlinear Systems}
	
	\author[1]{Mohammadreza Kamaldar}
	
	\authormark{Kamaldar, M.}
	\titlemark{Iterative State- and Control-Dependent Model Predictive Control: A Jacobian-Free Formulation}
	
	\address[1]{\orgdiv{Department of Mechanical, Aerospace, and Biomedical Engineering}, \orgname{University of South Alabama}, \orgaddress{\state{AL}, \country{USA}}}
	
	\corres{Corresponding author: Mohammadreza Kamaldar \email{mkamaldar@southalabama.edu}}
	
	\abstract[Abstract]{This paper presents an iterative model predictive control algorithm that stabilizes constrained nonlinear systems without evaluating a single plant derivative. By factoring the exact nonlinear dynamics into a pseudo-linear form using state- and control-dependent coefficients (SCDCs), we replace the standard nonconvex optimization with a sequence of constrained linear-quadratic programs. Refreezing the coefficient matrices along the previously predicted trajectory drives the iteration. Near the origin, we prove this sequence contracts to a unique fixed point. We explicitly bound the number of iterations required to reach any stopping tolerance, and we quantify the distance from the fixed point to a true Karush-Kuhn-Tucker point, showing this optimality gap vanishes quadratically as the state approaches the origin. Inflating the discrete algebraic Riccati equation generates terminal ingredients that guarantee recursive feasibility and asymptotic stability, even when the solver terminates early. We adapt the terminal penalty online, proving it remains uniformly bounded, and we secure output feedback through the block-observable canonical form, which extracts the exact system state directly from past inputs and outputs. Retaining the block-banded structure of the subproblem forces the computational cost to scale linearly with the horizon length $\ell$. This $O(\ell)$ complexity matches the iterative linear quadratic regulator (iLQR) but sharply undercuts the $O(\ell^3)$ scaling of dense sequential quadratic programming (SQP). Numerical studies on a saturated quadrotor, a nonholonomic integrator, and a nonminimum-phase plant illustrate the theoretical bounds and map how the algorithm compares with iLQR, SQP, and linear-parameter-varying MPC.}
	
	\keywords{nonlinear model predictive control, state- and control-dependent coefficients, quadratic programming, suboptimal MPC, output feedback}
	
	\jnlcitation{\cname{%
			\author{Kamaldar M.}}.
		\ctitle{Iterative State- and Control-Dependent Model Predictive Control: A Jacobian-Free Formulation for Constrained Nonlinear Systems.} \cjournal{\it Int. J. Robust and Nonlinear Control} \cvol{2026;00(00):1--22}.}
	
	\maketitle

	\section{Introduction}
	\label{sec:intro}
	
	Constraints on states and controls are a defining feature of most control applications, and model predictive control (MPC) addresses them directly by solving a constrained optimization problem at each sampling instant and applying the first element of the resulting sequence \cite{keerthi1988optimal,kwon2006receding,camacho2013model,mayne2000constrained,rawlings2017model}. Researchers and practitioners have adopted MPC across diverse domains, including aerospace systems \cite{eren2017model}, automotive powertrains \cite{cairano2007model}, process control \cite{qin2003survey}, and robotics \cite{kuhne2004model}. Comprehensive surveys of MPC theory and computation appear in \cite{mayne2014model,allgower2012nonlinear}.
	
	When governing dynamics are nonlinear, the receding-horizon optimization generally poses a nonconvex nonlinear program that controllers must solve within a single sampling interval. Direct solution methods discretize the continuous-time optimal control problem; prominent approaches include sequential quadratic programming (SQP) \cite{nocedal2006numerical,diehl2002real}, interior-point methods \cite{wachter2006implementation}, the real-time iteration scheme \cite{diehl2005nominal}, and structure-exploiting embedded solvers \cite{frison2020hpipm,verschueren2022acados}. Alternatively, indirect methods formulate a two-point boundary value problem using the Pontryagin minimum principle \cite{ohtsuka2004continuation}. Another major class of algorithms solves a sequence of local linear-quadratic approximations of the nonlinear optimal control problem. Differential dynamic programming \cite{jacobson1970differential} and the iterative linear quadratic regulator (iLQR) \cite{LiTodorov2004,TodorovLi2005,tomizukaiLQR} typify this approach. A related and computationally efficient technique casts the nonlinear dynamics into a linear-parameter-varying (LPV) model. This approach freezes the scheduling parameter at the current operating point for the duration of the prediction horizon \cite{cisneros2018lpv,morato2020model}.
	
	This paper develops an alternative approach to nonlinear MPC based on state- and control-dependent coefficients (SCDCs). An SCDC representation writes the dynamics map as a product of a state- and control-dependent matrix pair with the state and control, which is an algebraic identity rather than an approximation, so that no derivative of the dynamics is formed and no truncation error is incurred \cite{cloutier1996nonlinear}. The guarantees established for SCDC-based feedback are local \cite{tsiotras1996counterexample,mracek1998control}, and the technique has been used in a variety of applications \cite{findeisen2003state,mracek1998control,erdem2004design,ccimen2010systematic,weiss2012forward,prach2018output}. Previous research has also integrated SCDCs with receding-horizon optimization: Chang et al.\ \cite{chang2013constrained} employ a discrete-time state-dependent Riccati equation with a backward-propagating Riccati recursion, and Ru et al.\ \cite{ru2017nonlinear} freeze the coefficients at the current state over the horizon to control a quadcopter. The conceptual dual of SCDC-based control has recently been formalized for moving-horizon estimation \cite{kamaldarSCDMHE}.
	
	The primary contribution of this paper is iterative state- and control-dependent MPC (iSCD-MPC). At each control update, the algorithm solves a sequence of constrained linear-quadratic programs by refreezing the SCDC matrices along the trajectory predicted in the previous iteration. This mechanism offers distinct structural advantages over existing methods. Unlike iLQR and SQP, the algorithm evaluates zero plant Jacobians. Unlike LPV-MPC, the coefficient matrices vary along the predicted trajectory rather than remaining frozen at the current state. Updating the matrices along the prediction horizon is critical for controlling systems that lose stabilizability at the frozen point, such as kinematic models subject to Brockett's obstruction \cite{brockett1983asymptotic}.
	
	The specific contributions are as follows.
	First, we formulate the iSCD-MPC algorithm. We detail its block-banded quadratic-program (QP) structure, introduce a warm-starting procedure, and develop an online terminal-penalty adaptation that serves as the control-theoretic dual of the arrival-cost update in moving-horizon estimation \cite{Rao2003,Alessandri2008}.
	
	Second, we provide a rigorous theoretical analysis. We establish the existence and uniqueness of a fixed point for the SCDC iteration and derive a contraction estimate whose rate is proportional to the norm of the current state (Proposition~\ref{prop:contraction}). We explicitly bound the number of iterations required to meet a stopping tolerance (Proposition~\ref{prop:iteration_bound}) and prove the geometric convergence of the predicted cost (Proposition~\ref{prop:cost_decrease}). We then bound the distance between the fixed point and a Karush-Kuhn-Tucker (KKT) point of the underlying nonlinear program, demonstrating that this suboptimality gap vanishes quadratically at the origin (Proposition~\ref{prop:kkt_gap}). We show uniform boundedness of the adapted terminal penalty (Proposition~\ref{prop:Pk_bounded}), guarantee recursive feasibility (Theorem~\ref{thm:feasibility}), and establish asymptotic stability under both optimal (Theorem~\ref{thm:stability}) and early-terminated suboptimal (Theorem~\ref{thm:relaxed_lyapunov}) control sequences. The suboptimal analysis follows the framework of \cite{scokaert1999suboptimal}.
	
	Third, we extend the method to output-feedback control using the block-observable canonical form (BOCF) \cite{polderman1989state,islamPCAC}. This formulation, introduced preliminarily in \cite{kamaldar2023arxiv}, acts as a deadbeat observer and transfers the state-feedback stability guarantees to the output-feedback setting without requiring a separation argument.
	
	Fourth, we derive an analytical complexity comparison, demonstrating how the per-step cost of iSCD-MPC scales against iLQR, SQP, and LPV-MPC.
	
	Fifth, we present a numerical study on three distinct benchmarks designed around the theoretical results. We control a planar quadrotor subject to thrust and torque saturation to validate contraction, cost-convergence, and terminal-penalty bounds. We stabilize a nonholonomic integrator to demonstrate convergence where frozen-coefficient LPV-MPC structurally fails. Finally, we execute output-feedback control on a nonminimum-phase sampled-data triple integrator with asymmetric saturation to validate the BOCF architecture.
	
	A preliminary version of the algorithm appears in the preprint \cite{kamaldar2023arxiv}, which recent studies apply to six-degree-of-freedom aerospace guidance \cite{alhazmi2025nonlinear} and solve via root-finding methods \cite{alhazmi2025root}. The present paper formalizes the algorithm and supersedes that preprint: none of the theoretical results appears there, and the numerical study is new.
	
	The remainder of this paper is organized as follows. Section~\ref{sec:notation} presents notation. Section~\ref{sec:problem} formulates the receding-horizon control problem. Section~\ref{sec:algorithm} details the iSCD-MPC algorithm. Section~\ref{sec:OF} covers the BOCF-based output-feedback architecture. Section~\ref{sec:theory} provides the theoretical analysis, and Section~\ref{sec:complexity} compares computational complexity. Section~\ref{sec:examples} presents the numerical study, and Section~\ref{sec:conclusions} concludes the paper.
	
	\section{Notation}
	\label{sec:notation}
	
	Let $\BBR$, $\BBN$, and $\BBZ^+$ denote the sets of real numbers, nonnegative integers, and positive integers, respectively.
	Let $\|\cdot\|$ denote the Euclidean norm on $\BBR^n$ and the corresponding induced norm on $\BBR^{m\times n}$, that is, $\|M\|\triangleq \sup_{x\ne 0}\|Mx\|/\|x\|$.
	Let $x_{(i)}$ denote the $i$th component of $x\in\BBR^n$.
	The symmetric matrix $P\in\BBR^{n\times n}$ is positive semidefinite (respectively, positive definite) if all of its eigenvalues are nonnegative (respectively, positive), and we write $P\succeq 0$ (respectively, $P\succ 0$).
	For symmetric $P$, $\lambda_{\max}(P)$ and $\lambda_{\min}(P)$ denote the maximum and minimum eigenvalues of $P$.
	For $\SX,\SY\subseteq\BBR^n$, the Minkowski sum is $\SX\oplus\SY\triangleq \{x+y\colon x\in\SX,\ y\in\SY\}$, and the Pontryagin difference is $\SX\ominus\SY\triangleq \{x\in\BBR^n \colon x+y\in\SX \mbox{ for all } y\in\SY\}$.
	For a differentiable function $h\colon\BBR^n\times\BBR^m\to\BBR^q$, $\nabla_x h(x,u)\in\BBR^{q\times n}$ and $\nabla_u h(x,u)\in\BBR^{q\times m}$ denote the partial Jacobians of $h$ with respect to its first and second arguments, respectively.
	In Algorithm~\ref{alg:iscd_mpc}, the symbol $\gets$ denotes assignment.
	Finally, ${\rm int}\,\SX$ denotes the interior of $\SX$, $I_n$ denotes the $n\times n$ identity matrix, and $e_i\in\BBR^n$ denotes the $i$th column of $I_n$.
	
	\section{Problem Formulation}
	\label{sec:problem}
	
	Consider the discrete-time nonlinear system
	\begin{equation}
		x_{k+1}= f(x_k,u_k),
		\label{eq:xk1}
	\end{equation}
	where, for all $k\in\BBN$, $x_k\in\BBR^n$ is the state, $x_{0}\in\BBR^n$ is the initial condition, $u_k\in\BBR^m$ is the control, $u_0\in\BBR^m$ is the given initial control, and $f\colon \BBR^n\times \BBR^m\to\BBR^{n}$.
	Until Section~\ref{sec:OF}, we assume that the state $x_k$ is available for feedback; Section~\ref{sec:OF} treats output feedback.
	
	Let $\ell\ge2$ be the horizon length.
	For all $k\in\BBN$ and $j\in\{1,\ldots,\ell\}$, let $x_{k,j}$ denote the predicted state for step $k+j$ computed at step $k$ using
	\begin{equation}
		x_{k,j+1} = f(  x_{k,j},  u_{k,j} ),
		\label{eq:xfka}
	\end{equation}
	where $x_{k,0}\triangleq x_k$, $u_{k,0}\triangleq u_k$, and, for all $j\in\{1,\ldots,\ell-1\}$, $u_{k,j}$ is the predicted control for step $k+j$ computed at step $k$.
	Note that, for all $k\in\BBN$, \eqref{eq:xfka} with $j=0$ implies that
	\begin{equation}
		x_{k,1} = f( x_{k},u_{k} ),
		\label{eq:xk1_pred}
	\end{equation}
	which is determined by the current state $x_k$ and the current control $u_k$ and is thus independent of the predicted controls $u_{k,1},\ldots,u_{k,\ell-1}$.
	
	For all $k\in\BBN$, consider the cost $J_k\colon\BBR^{m(\ell-1)}\to[0,\infty)$ defined by
	\begin{align}
		J_k&(u_{k,1},\ldots,u_{k,\ell-1}) \triangleq  \tfrac{1}{2} x_{k,\ell}^\rmT P_k x_{k,\ell}\nn\\
		&\quad +\tfrac{1}{2} \sum_{j=1}^{\ell-1} \(x_{k,j}^\rmT Q_{k,j} x_{k,j} + u_{k,j}^\rmT R_{k,j} u_{k,j}\),
		\label{Jdefn}
	\end{align}
	where $x_{k,1},\ldots,x_{k,\ell}$ are given by \eqref{eq:xfka}; $P_k\in\BBR^{n\times n}$ is the positive-definite terminal weighting; and, for all $j\in\{1,\ldots,\ell-1\}$, $Q_{k,j}\in\BBR^{n\times n}$ is positive semidefinite and $R_{k,j}\in\BBR^{m\times m}$ is positive definite.
	
	At each step $k\in\BBN$, the objective is to determine a control sequence $u_{k,1},\ldots,u_{k,\ell-1}$ that minimizes \eqref{Jdefn} subject to the prediction dynamics \eqref{eq:xfka} and to state and control constraints.
	To express the constraints compactly, we stack the predicted states and controls in the vector
	\begin{equation*}
		v_k\triangleq \matl x_{k,1}^\rmT & \cdots & x_{k,\ell}^\rmT&u_{k,1}^\rmT&\cdots&u_{k,\ell-1}^\rmT\matr^\rmT \in\BBR^{n_v},
	\end{equation*}
	where $n_v\triangleq \ell(n+m)-m$, and we consider the constraints
	\begin{gather}
		\SA v_k \le b,\label{eq:ineqcon}\\
		\SA_{\rm eq} v_k = b_{\rm eq},\nonumber\\
		\underline v_{(s)}\le v_{k(s)}\le \overline v_{(s)},\quad s\in\{1,\ldots,n_v\},\label{eq:boxcon}
	\end{gather}
	where $\SA\in\BBR^{n_\rmc\times n_v}$, $b\in\BBR^{n_\rmc}$, $\SA_{\rm eq}\in\BBR^{n_\rme\times n_v}$, $b_{\rm eq} \in \BBR^{n_\rme}$, $\underline v,\overline v\in\BBR^{n_v}$, and $n_\rmc,n_\rme\in\BBN$.
	
	In accordance with receding-horizon control, only the first element of the computed sequence is applied, that is,
	\begin{equation}
		u_{k+1} = u_{k,1},
		\label{eq:uk1_applied}
	\end{equation}
	and $u_{k,2},\ldots,u_{k,\ell-1}$ are discarded.
	
	\begin{remark}\rm
		\label{rem:delay}
		The update \eqref{eq:uk1_applied} reflects a one-step computational delay: the control applied at step $k+1$ is computed during the interval between steps $k$ and $k+1$, using the state $x_k$ measured at step $k$ and the control $u_k$ applied at step $k$.
		Consequently, the first predicted state $x_{k,1}=f(x_k,u_k)$ is fixed, the earliest control that the optimization can influence is $u_{k,1}$, and \eqref{eq:xk1_pred} and \eqref{eq:uk1_applied} imply that $x_{k+1}=x_{k,1}$.
		This convention accounts for the computation time of the optimization in real-time implementation; the same convention is used in \cite{islamPCAC,kamaldarSCDMHE}.
	\end{remark}
	
	\section{\lowercase{i}SCD-MPC Algorithm}
	\label{sec:algorithm}
	
	This section reformulates \eqref{eq:xk1} using SCDCs and presents the iterative state- and control-dependent model predictive control (iSCD-MPC) algorithm; Section~\ref{sec:terminal_adaptation} presents an online adaptation of the terminal weighting $P_k$ in \eqref{Jdefn} based on a forward-propagating Riccati equation.
	Let $A\colon \BBR^n\times \BBR^m\to \BBR^{n\times n}$ and $B\colon \BBR^n\times \BBR^m \to \BBR^{n\times m}$ be such that, for all $x\in\BBR^n$ and $u\in\BBR^m$,
	\begin{equation}
		f(x,u) = A(x,u)\,x+B(x,u)\,u.
		\label{eq:fSCDC}
	\end{equation}
	Note that \eqref{eq:fSCDC} implies that $f(0,0)=0$, and thus the origin is an equilibrium of \eqref{eq:xk1}.
	If $n>1$, then the factorization \eqref{eq:fSCDC} is not unique \cite{ccimen2010systematic}, and thus the choice of $(A,B)$ is a design variable.

	Let $\rho\in\BBZ^+$ denote the maximum number of iterations, and let $i\in\{1,\ldots,\rho\}$ denote the iteration index.
	For all $k\in\BBN$, $j\in\{1,\hdots,\ell\}$, and $i\in\{1,\ldots,\rho\}$, let $x_{k,j|i}\in\BBR^n$ and $u_{k,j|i} \in\BBR^m$ denote the predicted state and predicted control for step $k+j$ computed at step $k$ and iteration $i$.
	The predicted states are generated by the nonlinear rollout
	\begin{equation}
		x_{k,j+1|i}= f(x_{k,j|i},u_{k,j|i}) = A_{k,j|i}\, x_{k,j|i}+B_{k,j|i}\, u_{k,j|i},
		\label{eq:xkj+1i}
	\end{equation}
	where
	\begin{equation}
		A_{k,j|i} \triangleq A(x_{k,j|i},u_{k,j|i}),\quad B_{k,j|i} \triangleq B(x_{k,j|i},u_{k,j|i}),
		\label{eq:AB_SCDC}
	\end{equation}
	and
	\begin{equation}
		x_{k,0|i}\triangleq x_k,\qquad u_{k,0|i} \triangleq u_k.
		\label{eq:init_cond}
	\end{equation}
	It follows from \eqref{eq:xk1_pred}--\eqref{eq:init_cond} that, for all $i\in\{1,\ldots,\rho\}$,
	\begin{equation*}
		x_{k,1|i} = f(x_k,u_k) = x_{k,1},
	\end{equation*}
	that is, the first predicted state is the same at every iteration.
	
	\subsection{Iterative QP}
	\label{sec:iterQP}
	
	For all $k\in\BBN$ and $i\in\{2,\hdots,\rho\}$, let $u_{k,1|i},\ldots,u_{k,\ell-1|i}$ be the solution of
	\begin{align}
		\min_{\mu_{1},\ldots,\mu_{\ell-1}} \Bigg(
		\tfrac{1}{2} \xi_{\ell}^\rmT P_k \xi_{\ell}+\tfrac{1}{2} &\sum_{j=1}^{\ell-1} \( \xi_{j}^\rmT Q_{k,j} \xi_{j} + \mu_{j}^\rmT R_{k,j} \mu_{j}\) \Bigg),
		\label{eq:QP_cost}
	\end{align}
	subject to
	\begin{gather}
		\xi_{1}= x_{k,1},\nonumber\\
		\xi_{j+1}= A_{k,j|i-1}\, \xi_{j}+B_{k,j|i-1}\, \mu_{j},\quad j\in\{1,\ldots,\ell-1\},\label{eq:QP_dyn}\\
		\SA \nu \le b,\qquad
		\SA_{\rm eq}  \nu = b_{\rm eq},\label{eq:QP_ineq}\\
		\underline v_{(s)}\le  \nu_{(s)}\le \overline v_{(s)},\quad s\in\{1,\ldots,n_v\},\label{eq:QP_box}
	\end{gather}
	where $\nu \triangleq  \matl \xi_{1}^\rmT& \cdots & \xi_{\ell}^\rmT&\mu_{1}^\rmT&\cdots&\mu_{\ell-1}^\rmT\matr^\rmT$.
	At iteration $i\ge 2$, the SCDC matrices $A_{k,j|i-1}$ and $B_{k,j|i-1}$ in \eqref{eq:QP_dyn} are frozen at the values obtained from the rollout \eqref{eq:xkj+1i} under the previous iterate, and thus \eqref{eq:QP_cost}--\eqref{eq:QP_box} is a constrained linear-quadratic program.
	Note that no Jacobian of $f$ is evaluated at any point of the algorithm; the linear prediction model \eqref{eq:QP_dyn} is obtained by refreezing the exact factorization \eqref{eq:fSCDC} along the previously predicted trajectory.
	
	Let $\varepsilon\in(0,\infty)$ denote the stopping tolerance, and define the stacked control vector
	\begin{equation*}
		U_{k|i} \triangleq \matl u_{k,1|i}^\rmT &\cdots &u_{k,\ell-1|i}^\rmT \matr^\rmT \in\BBR^{m(\ell-1)}.
	\end{equation*}
	The number of iterations performed at step $k$ is
	\begin{equation}
		\rho_k\triangleq \min\big\{\rho,\ \min\{i\in\{2,\ldots,\rho\}\colon \|U_{k|i}-U_{k|i-1}\|<\varepsilon\}\big\},
		\label{eq:rhokk}
	\end{equation}
	where the inner minimum is defined to be $\rho$ if the tolerance is not met for any $i\in\{2,\ldots,\rho\}$.
	In accordance with \eqref{eq:uk1_applied}, for all $k\in\BBN$, the applied control is
	\begin{equation}
		u_{k+1}=u_{k,1|\rho_k}.
		\label{eq:applied_control}
	\end{equation}
	
	\subsection{QP Block-Matrix Structure}
	\label{sec:QP_block}
	
	Define the stacked state vector
	\begin{equation*}
		X_{k|i}\triangleq \matl x_{k,1|i}^\rmT&\cdots& x_{k,\ell|i}^\rmT\matr^\rmT\in\BBR^{n\ell}.
	\end{equation*}
	For all $k\in\BBN$ and $i\in\{2,\ldots,\rho\}$, the constraint \eqref{eq:QP_dyn}, whose coefficients are frozen at iteration $i-1$, is equivalent to
	\begin{equation}
		X_{k|i} = \Phi_{k|i-1}\, x_{k,1} + \Gamma_{k|i-1}\, U_{k|i},
		\label{eq:X_block}
	\end{equation}
	where,  for all $i\in\{1,\ldots,\rho\}$,
	\begin{equation}
		\Phi_{k|i}\triangleq \matl I_n  \\ A_{k,1|i}\\\vdots\\ A_{k,\ell-1|i}\cdots A_{k,1|i}\matr\in\BBR^{n\ell\times n},
		\label{eq:Phidef}
	\end{equation}
	and $\Gamma_{k|i}\in\BBR^{n\ell\times m(\ell-1)}$ is the block-lower-triangular matrix whose $(r,c)$ block, for $r\in\{2,\ldots,\ell\}$ and $c\in\{1,\ldots,r-1\}$, is
	\begin{equation}
		\big[\Gamma_{k|i}\big]_{r,c} \triangleq
		\begin{cases}
			B_{k,r-1|i}, & c=r-1,\\
			A_{k,r-1|i}\cdots A_{k,c+1|i}\, B_{k,c|i}, & c<r-1,
		\end{cases}
		\label{eq:Gammadef}
	\end{equation}
	and whose remaining blocks are zero.
	For all $k\in\BBN$, define the block-diagonal weightings
	\begin{align*}
		\SQ_k&\triangleq {\rm diag}(Q_{k,1},\ldots,Q_{k,\ell-1},P_k)\in\BBR^{n\ell\times n\ell},\\
		\SR_k&\triangleq {\rm diag}(R_{k,1},\ldots,R_{k,\ell-1})\in\BBR^{m(\ell-1)\times m(\ell-1)}.
	\end{align*}
	Substituting \eqref{eq:X_block} into \eqref{eq:QP_cost} yields the reduced Hessian
	\begin{equation}
		\SH_{k|i} \triangleq \SR_k + \Gamma_{k|i-1}^\rmT \SQ_k\, \Gamma_{k|i-1}.
		\label{eq:Hessian}
	\end{equation}
	Since $\SR_k\succ0$ and $\SQ_k\succeq 0$, it follows from \eqref{eq:Hessian} that
	\begin{equation*}
		\SH_{k|i}\succeq \lambda_{\min}(\SR_k)\,I_{m(\ell-1)}\succ 0,
	\end{equation*}
	and thus \eqref{eq:QP_cost}--\eqref{eq:QP_box} is a strictly convex QP.
	In the case where the constraints \eqref{eq:QP_ineq} and \eqref{eq:QP_box} are absent or inactive, the QP has the explicit solution
	\begin{equation}
		U_{k|i} = -\SH_{k|i}^{-1}\Gamma_{k|i-1}^\rmT \SQ_k\,\Phi_{k|i-1}\, x_{k,1},
		\label{eq:Hmap}
	\end{equation}
	which defines the iteration map $\SM\colon \BBR^n\times\BBR^{m(\ell-1)}\to\BBR^{m(\ell-1)}$ by
	\begin{equation}
		U_{k|i} = \SM(x_{k,1},U_{k|i-1}).
		\label{eq:Hmap_def}
	\end{equation}
	The step index $k$ enters \eqref{eq:Hmap_def} only through the anchor $x_{k,1}$ and, in the case of time-varying weightings, through $\SQ_k$ and $\SR_k$; for the constant weightings used in Section~\ref{sec:theory}, $\SM$ is independent of $k$.
	The block-banded structure underlying \eqref{eq:Hessian} permits a Riccati-based factorization of each QP with computational cost $O(\ell(n+m)^3)$ \cite{rao1998application}; an analytical comparison with other nonlinear MPC methods is given in Section~\ref{sec:complexity}.
	
	\subsection{Terminal Control Law}
	\label{sec:terminal}
	
	The warm start of Section~\ref{sec:stopping} and the terminal ingredients of the stability analysis both use a terminal control law $\kappa_\rmf\colon\BBR^n\to\BBR^m$, which is constructed here.
	Define $A_0\triangleq A(0,0)$ and $B_0\isdef B(0,0)$. 
	Following the weight-inflation technique of \cite{chen1998quasi}, let $Q\succ0$, assume that  $(A_0,B_0)$ is stabilizable, and let $P\succ 0$ be the unique stabilizing solution of the inflated discrete algebraic Riccati equation
	\begin{equation}
		P = A_0^\rmT P A_0 - A_0^\rmT P B_0(2R+B_0^\rmT P B_0)^{-1}B_0^\rmT P A_0+2Q.
		\label{eq:DARE}
	\end{equation}
	Define
	\begin{gather}
		K_\rmf\triangleq -(2R+B_0^\rmT P B_0)^{-1}B_0^\rmT P A_0,\label{eq:Kf_def}\\
		\kappa_\rmf(x)\triangleq K_\rmf x,\label{eq:kappaf_def}\\
		A_{\rm cl}\triangleq A_0+B_0 K_\rmf.\label{eq:Acl_def}
	\end{gather}
	Note that using \eqref{eq:DARE}--\eqref{eq:Acl_def} and completion of squares implies 
	\begin{equation}
		A_{\rm cl}^\rmT P A_{\rm cl} - P =  -Q_\star,
		\label{eq:Riccati_identity}
	\end{equation}
	where $Q_\star\isdef2Q + 2K_\rmf^\rmT R K_\rmf \succ 0$ because $Q\succ 0$.
	
	\begin{remark}\rm
		\label{rem:inflation}
		Since the discrete algebraic Riccati equation is positively homogeneous in $(P,Q,R)$, the solution of \eqref{eq:DARE} is $P = 2\bar P$, where $\bar P$ is the stabilizing solution associated with the weights $(Q,R)$, and $-K_\rmf$, with $K_\rmf$ given by \eqref{eq:Kf_def}, equals the linear-quadratic-regulator gain associated with $(Q,R)$.
		Hence, the inflation doubles the terminal penalty without altering the terminal control law; the factor of two absorbs the nonlinear remainder in the proof of Proposition~\ref{prop:terminal_set}, as in the quasi-infinite-horizon construction of \cite{chen1998quasi}.
	\end{remark}
	
	The terminal set associated with $\kappa_\rmf$, together with the invariance and descent properties used in the stability analysis, is constructed in Section~\ref{sec:terminal_set}.
	
	\subsection{Warm Starting}
	\label{sec:stopping}
	
	For $k=0$, initialize
	\begin{equation}
		u_{0,j|1} \triangleq u_0,\quad j \in\{ 1,\ldots,\ell-1\}.
		\label{eq:u0_init}
	\end{equation}
	For all $k\ge1$ and $j \in\{ 1,\ldots,\ell-1\}$, define
	\begin{equation}
		u_{k,j|1} \triangleq \begin{cases}
			u_{k-1,j+1|\rho_{k-1}}, & j\in\{1,\ldots,\ell-2\},\\
			\kappa_\rmf\big(x_{k,\ell-1|1}\big), & j=\ell-1,
		\end{cases}
		\label{eq:warm_start}
	\end{equation}
	where $\kappa_\rmf$ is the terminal control law  constructed in Section~\ref{sec:terminal}, and $x_{k,\ell-1|1}$ is obtained from the rollout \eqref{eq:xkj+1i} under the shifted controls $u_{k,1|1},\ldots,u_{k,\ell-2|1}$.
	The warm start \eqref{eq:warm_start} shifts the previously computed control sequence forward by one step and appends the terminal control law; this construction mirrors the shifted candidate sequence used in the feasibility and stability analysis of Section~\ref{sec:theory} and is analogous to the warm start used in the real-time iteration scheme \cite{diehl2002real} and in moving-horizon estimation \cite{Alessandri2008,kamaldarSCDMHE}.
	
	\subsection{Online Terminal-Penalty Adaptation}
	\label{sec:terminal_adaptation}
	
	In moving-horizon estimation, the arrival cost compresses the discarded data into a quadratic penalty that is updated at each step by a Riccati recursion \cite{Rao2003,rawlings2017model}.
	As the control-theoretic dual, we update the terminal penalty $P_k$ in \eqref{Jdefn} online by the forward-propagating Riccati equation \cite{weiss2012forward,prach2018output}
	\begin{align}
		P_{k+1} &= \bar A_k^\rmT P_k \bar A_k\nn\\
		&\quad - \bar A_k^\rmT P_k \bar B_k (\bar B_k^\rmT P_k \bar B_k + R)^{-1} \bar B_k^\rmT P_k \bar A_k + Q,
		\label{eq:Pk_update}
	\end{align}
	where $P_0\succ 0$, $Q\succ 0$, $R\succ 0$, and
	\begin{equation}
		\bar A_k\triangleq A(x_k,u_k),\qquad \bar B_k\triangleq B(x_k,u_k)
		\label{eq:barAB}
	\end{equation}
	are the SCDC matrices at the current operating point.
	Note that, in the case where $(\bar A_k,\bar B_k)$ is constant, the sequence $(P_k)_{k=0}^\infty$ generated by \eqref{eq:Pk_update} converges to the stabilizing solution of the corresponding discrete algebraic Riccati equation \cite[Chap.~4]{Anderson1979}.
	Proposition~\ref{prop:Pk_bounded} shows that \eqref{eq:Pk_update} is uniformly bounded above and below along the closed-loop trajectory.
	Figure~\ref{fig:iscd_architecture} illustrates the two-loop execution architecture of iSCD-MPC, separating the outer receding-horizon update from the inner Jacobian-free optimization.
	Algorithm~\ref{alg:iscd_mpc} summarizes the complete method.
	
	\begin{algorithm}[t]
		\caption{iSCD-MPC with Warm Starting}
		\label{alg:iscd_mpc}
		\begin{algorithmic}[1]
			\Require horizon $\ell$, maximum iterations $\rho$, tolerance $\varepsilon$, weightings $Q_{k,j}$, $R_{k,j}$, $P_0$, terminal law $\kappa_\rmf$, initial control $u_0$.
			\State $u_{0,j|1}\gets u_0$ for $j\in\{1,\ldots,\ell-1\}$
			\For{$k = 0, 1, 2, \ldots$}
			\State Measure $x_k$ 
			\State $x_{k,1}\gets f(x_k,u_k)$
			\If{$k\ge 1$}
			\State $u_{k,j|1}\gets u_{k-1,j+1|\rho_{k-1}}$ for $j\in\{1,\ldots,\ell-2\}$
			\State Propagate $x_{k,\ell-1|1}$ by \eqref{eq:xkj+1i}
			\State$u_{k,\ell-1|1}\gets \kappa_\rmf(x_{k,\ell-1|1})$
			\EndIf
			\For{$i = 2,3,\ldots,\rho$}
			\State Roll out $x_{k,j|i-1}$ by \eqref{eq:xkj+1i}; evaluate \eqref{eq:AB_SCDC}
			\State Solve QP \eqref{eq:QP_cost}--\eqref{eq:QP_box} to obtain $U_{k|i}$
			\If{$\|U_{k|i}-U_{k|i-1}\|<\varepsilon$} \textbf{break}
			\EndIf
			\EndFor
			\State $\rho_k\gets i$; apply $u_{k+1}\gets u_{k,1|\rho_k}$
			\State Update $P_{k+1}$ by \eqref{eq:Pk_update}
			\EndFor
		\end{algorithmic}
	\end{algorithm}
	
	\begin{figure*}[t!]
		\centering
		\begin{tikzpicture}[
			>=Stealth,
			% --- Compact & Legible Functional Block Styles ---
			plantbox/.style={
				rectangle,
				draw=black!85,
				very thick,
				fill=black!8,
				rounded corners=3pt,
				minimum width=3.9cm,
				minimum height=1.6cm,
				align=center,
				font=\small
			},
			execbox/.style={
				rectangle,
				draw=teal!70!black,
				very thick,
				fill=teal!7,
				rounded corners=3pt,
				minimum width=3.9cm,
				minimum height=1.6cm,
				align=center,
				font=\small
			},
			warmbox/.style={
				rectangle,
				draw=orange!80!black,
				very thick,
				fill=orange!9,
				rounded corners=3pt,
				minimum width=4.1cm,
				minimum height=1.6cm,
				align=center,
				font=\small
			},
			riccatibox/.style={
				rectangle,
				draw=blue!40!black,
				very thick,
				fill=blue!8,
				rounded corners=3pt,
				minimum width=4.1cm,
				minimum height=1.6cm,
				align=center,
				font=\small
			},
			rolloutbox/.style={
				rectangle,
				draw=blue!40!black,
				very thick,
				fill=blue!8,
				rounded corners=3pt,
				minimum width=4.3cm,
				minimum height=1.6cm,
				align=center,
				font=\small
			},
			qpbox/.style={
				rectangle,
				draw=blue!40!black,
				very thick,
				fill=blue!12,
				rounded corners=3pt,
				minimum width=4.3cm,
				minimum height=1.6cm,
				align=center,
				font=\small
			},
			decision/.style={
				diamond,
				draw=red!80!black,
				very thick,
				fill=red!8,
				aspect=2.2,
				align=center,
				inner sep=1.5pt,
				font=\small
			},
			line/.style={draw, thick, ->},
			labeltext/.style={font=\small, text=black!85}
			]
			
			% ================= COLUMN 1: PLANT & EXECUTION (x = 0) =================
			\node[plantbox] (plant) at (0, 0) {
				\textbf{\normalsize Controlled Plant} \\[3pt]
				Dynamic Flow Eq.~\eqref{eq:xk1} \\
				BOCF Observer, Section~\ref{sec:OF}
			};
			
			\node[execbox] (apply) at (0, -4.6) {
				\textbf{\normalsize Control Execution} \\[3pt]
				Apply Update Eq.~\eqref{eq:applied_control} \\
				Discard Candidate Tail
			};
			
			% ================= COLUMN 2: WARM START & ADAPTATION (x = 5.3) =================
			\node[warmbox] (warm) at (5.3, 0) {
				\textbf{\normalsize Shifted Warm Start} \\[3pt]
				Candidate Shift Eq.~\eqref{eq:warm_start} \\
				Append Terminal Law Eq.~\eqref{eq:kappaf_def}
			};
			
			\node[riccatibox] (riccati) at (5.3, -2.3) {
				\textbf{\normalsize Terminal Adaptation} \\[3pt]
				Riccati Recursion Eq.~\eqref{eq:Pk_update} \\
				Operating Matrices Eq.~\eqref{eq:barAB}
			};
			
			% ================= COLUMN 3: INNER SCDC ITERATION (x = 11.2) =================
			\node[rolloutbox] (rollout) at (11.2, 0) {
				\textbf{\normalsize Pseudo-Linear Rollout} \\[3pt]
				SCDC Trajectory Eq.~\eqref{eq:xkj+1i} \\
				Factorization Map Eq.~\eqref{eq:AB_SCDC} \\[2pt]
				\bfseries\color{blue!40!black} (No Jacobians evaluated)
			};
			
			\node[qpbox] (qp) at (11.2, -2.3) {
				\textbf{\normalsize Block-Banded QP} \\[3pt]
				Reduced Hessian Eq.~\eqref{eq:Hessian} \\
				Iteration Map Eq.~\eqref{eq:Hmap_def}
			};
			
			\node[decision] (check) at (11.2, -4.6) {
				\textbf{\normalsize $\|U_{k|i} - U_{k|i-1}\| < \varepsilon$} \\
				or iteration cap $i = \rho$?
			};
			
			% ================= COMPACT SYMMETRICAL CONTAINER =================
			% Bounding box shifted right to x=8.5 so "Seed U_{k|1}" clears the dashed line.
			% Right edge extended to 14.7 for perfect symmetry and arrow clearance.
			\begin{pgfonlayer}{background}
				\draw[dashed, very thick, draw=blue!40!black, fill=blue!3, rounded corners=8pt]
				(8.5, 1.45) rectangle (14.7, -5.7);
				% Title anchored at the exact horizontal center of the box (x = 11.6)
				\node[font=\footnotesize\bfseries, text=blue!40!black, anchor=center] 
				at (11.6, 0.98) {Inner SCDC Contraction Loop ($i = 2, \ldots, \rho$)};
			\end{pgfonlayer}
			
			% ================= ORTHOGONAL ROUTING & INTERCONNECTIONS =================
			% Horizontal: Plant to Warm start
			\draw[line] (plant.east) -- node[above=2pt, labeltext] {\textbf{State} $x_k$} (warm.west);
			
			% Horizontal: Warm start to Rollout
			% pos=0.4 pulls the text leftward, keeping it safely away from the dashed boundary
			\draw[line] (warm.east) -- node[above=2pt, pos=0.4, labeltext] {\textbf{Seed} $U_{k|1}$} (rollout.west);
			
			% Vertical: Rollout down to QP
			\draw[line] (rollout.south) -- node[right=2pt, labeltext] {Refrozen Matrices} (qp.north);
			
			% Vertical: QP down to Check
			\draw[line] (qp.south) -- node[right=2pt, labeltext] {Iterate $U_{k|i}$} (check.north);
			
			% Horizontal: Check YES to Apply (Exactly Level at y = -4.6)
			\draw[line, blue!40!black] (check.west) -- node[above=2pt, font=\small\bfseries, text=blue!40!black] {Yes ($\rho_k = i$)} (apply.east);
			
			% Vertical: Apply up to Plant (Exactly Vertical at x = 0)
			\draw[line] (apply.north) -- node[left=2pt, labeltext] {\textbf{Control} $u_{k+1}$} (plant.south);
			
			% Orthogonal State-Tap to Online Riccati Adaptation
			\coordinate (tap) at (2.65, 0);
			\fill (tap) circle (1.8pt);
			\draw[line, dotted, very thick, blue!40!black] (tap) |- (riccati.west);
			\draw[line, dotted, very thick, blue!40!black] (riccati.north) -- node[right=2pt, font=\small, text=blue!40!black] {\textbf{Weight} $P_{k+1}$} (warm.south);
			
			% Orthogonal Right Retry Arrow
			% The label is positioned below the horizontal segment exiting the diamond
			\draw[line, red!80!black] (check.east) -- node[below=1pt, pos=0.4, font=\small\bfseries, text=red!85!black] {No ($i \gets i+1$)} ++(0.85, 0) |- (rollout.east);
			
		\end{tikzpicture}
		\caption{Algorithmic architecture of iSCD-MPC. The outer receding-horizon loop  advances the sampling step $k$, updates the SCDC terminal penalty $P_k$ via the forward Riccati recursion \eqref{eq:Pk_update}, and warm-starts the prediction by shifting the previous control sequence forward and appending the terminal control law \eqref{eq:warm_start}. The inner contraction loop  refreezes the exact pseudo-linear coefficients along the rollout of the previous iterate without evaluating plant Jacobians, converging locally at a rate proportional to the norm of the current state (Proposition~\ref{prop:contraction}).}
		\label{fig:iscd_architecture}
	\end{figure*}
	
	\section{Output-Feedback Control}
	\label{sec:OF}
	
	State-feedback predictive control requires the full state $x_k$ at every step.
	For a general nonlinear measurement map, output feedback requires a nonlinear asymptotic observer, and the state-feedback guarantees are not preserved unless a separation principle applies.
	We avoid the general observer problem by restricting attention to systems that admit an exact input-output realization, for which the block-observable canonical form (BOCF) provides deadbeat state reconstruction from a finite history of inputs and outputs.
	
	Consider the discrete-time input-output system
	\begin{align}
		y_{k} &=-\sum_{\tau=1}^n F_{\tau,k}\, y_{k-\tau} + \sum_{\tau=1}^n G_{\tau,k}\, u_{k-\tau}, \label{eq:yyy}
	\end{align}
	where $y_k\in\BBR^p$ is the output, $u_k\in\BBR^m$ is the control, and, for all $\tau\in\{1,\ldots,n\}$ and $k\in\BBN$,
	\begin{align}
		F_{\tau,k} &\triangleq f_\tau(y_{k-1},\ldots,y_{k-n},u_{k-1},\ldots, u_{k-n}), \label{eq:Fk_def}\\
		G_{\tau,k} &\triangleq g_\tau(y_{k-1},\ldots,y_{k-n},u_{k-1},\ldots, u_{k-n}),\label{eq:Gk_def}
	\end{align}
	where $f_\tau\colon \BBR^{np}\times \BBR^{nm}\to\BBR^{p\times p}$ and $g_\tau\colon \BBR^{np}\times \BBR^{nm} \to\BBR^{p\times m}$.
	The BOCF realization of \eqref{eq:yyy}, which is of the SCDC form \eqref{eq:fSCDC} with coefficients determined by \eqref{eq:Fk_def} and \eqref{eq:Gk_def}, is given by \cite{polderman1989state,islamPCAC}
	\begin{align}
		x_{k+1} &= A(x_k,u_k)\,x_{k} + B(x_k,u_k)\, u_{k},\label{eq:xk1_OF}\\
		y_{k} & = C x_{k},\label{eq:yk_OF}
	\end{align}
	where
	\begin{gather}
		A(x_k,u_k) \triangleq \matl -F_{1,k+1} & I_p & 0 & \cdots & 0  \\
		-F_{2,k+1} & 0 & I_p &\cdots  & 0\\
		\vdots & \vdots & \vdots & \ddots & \vdots \\
		-F_{n-1,k+1} & 0 & 0 & \cdots & I_p\\
		-F_{n,k+1} & 0 & 0 & \cdots & 0
		\matr,   \nonumber    \\
		B(x_k,u_k) \triangleq \matl  G_{1,k+1}  \\
		G_{2,k+1}  \\
		\vdots   \\
		G_{n,k+1}
		\matr,\quad
		C \triangleq \matl I_p & 0 & \cdots & 0\matr,
		\label{eq:BC_BOCF}
	\end{gather}
	and $x_k\triangleq \matl x_{k(1)}^\rmT &  \cdots& x_{k(n)}^\rmT\matr^\rmT \in\BBR^{np}$, where
	\begin{equation}
		x_{k(1)} \triangleq y_k,
		\label{eq:eta1}
	\end{equation}
	and, for all $\tau\in\{ 2,\hdots,n\}$,
	\begin{equation}
		x_{k(\tau)} \triangleq \sum_{s=1}^{n-\tau+1}  \( -F_{\tau+s-1,k}\, y_{k-s}  +  G_{\tau+s-1,k}\, u_{k-s}\).
		\label{eq:eta2}
	\end{equation}
	It follows from \eqref{eq:eta1} and \eqref{eq:eta2} that the BOCF state $x_k$ is an explicit function of the $n$ most recent outputs and inputs, and thus \eqref{eq:xk1_OF}--\eqref{eq:yk_OF} acts as a deadbeat observer with zero reconstruction error.
	The reconstruction is exact because \eqref{eq:eta1} and \eqref{eq:eta2} invert the input-output model \eqref{eq:yyy} algebraically rather than asymptotically, and it therefore presumes that $y_k$ is measured exactly and that \eqref{eq:yyy} describes the plant exactly.
	Under measurement noise, or for measurement maps that admit no exact finite-order input-output realization, the deadbeat property is lost and an estimator is required; Section~\ref{sec:conclusions} identifies the state- and control-dependent moving-horizon estimator of \cite{kamaldarSCDMHE} as the natural counterpart of the present method in that setting.
	Since \eqref{eq:xk1_OF} is in the SCDC form \eqref{eq:fSCDC}, iSCD-MPC applies directly to the BOCF realization for output-feedback control of \eqref{eq:yyy}; the corresponding stability statement is given in Proposition~\ref{prop:bocf_stability}.
	
	\section{Theoretical Analysis}
	\label{sec:theory}
	
	Throughout this section, we consider the system \eqref{eq:xk1} controlled by iSCD-MPC given by \eqref{eq:fSCDC}--\eqref{eq:applied_control}.
	To streamline the exposition, the weightings are taken to be constant, that is, for all $k\in\BBN$ and $j\in\{1,\ldots,\ell-1\}$, $Q_{k,j}= Q\succeq 0$ and $R_{k,j}= R\succ 0$; the algorithm supports time- and horizon-varying weightings.
	Except in Proposition~\ref{prop:Pk_bounded}, which concerns the adaptation \eqref{eq:Pk_update}, the terminal penalty is the static matrix $P\succ 0$ constructed in Section~\ref{sec:terminal}.
	Furthermore, for the analysis, we assume that the constraints \eqref{eq:ineqcon}--\eqref{eq:boxcon} are equivalent to the decoupled constraints
	\begin{equation}
		x_{k,j}\in\SX,\ \ j\in\{1,\ldots,\ell\},\quad u_{k,j}\in\SU,\ \ j\in\{1,\ldots,\ell-1\},
		\label{eq:decoupled_con}
	\end{equation}
	where $\SX\subseteq\BBR^n$ and $\SU\subseteq\BBR^m$ are closed, $0\in{\rm int}\,\SX$, and $0\in{\rm int}\,\SU$; the algorithm supports the general polyhedral constraints \eqref{eq:ineqcon}--\eqref{eq:boxcon}.
	
	The following assumptions are used in this section.
	
	\begin{enumerate}[label=({A\arabic*}),itemindent=5pt]
		\item \label{A:lipschitz}
		$A$ and $B$ are continuously differentiable, and there exist  $L_A, L_B\in(0,\infty)$ such that, for all $x, \hat{x} \in \BBR^n$ and $u, \hat{u} \in \BBR^m$,
		\begin{align}
			\|A(x,u) - A(\hat{x},\hat{u})\| &\le L_A \(\|x - \hat{x}\| + \|u - \hat{u}\|\), \label{eq:LipA}\\
			\|B(x,u) - B(\hat{x},\hat{u})\| &\le L_B \(\|x - \hat{x}\| + \|u - \hat{u}\|\).\label{eq:LipB}
		\end{align}
		
		\item \label{A:bounded}
		There exist $\bar a, \bar b\in(0,\infty)$ such that
		\begin{equation}
			\sup_{x\in\BBR^n,\,u\in\BBR^m}\|A(x,u)\|\le \bar a,\quad \sup_{x\in\BBR^n,\,u\in\BBR^m}\|B(x,u)\|\le \bar b.\label{eq:AB_bound}
		\end{equation}
		
		\item \label{A:stabilizable}
		The pair $(A_0,B_0)\triangleq (A(0,0),B(0,0))$ is stabilizable, and the pair $(A_0,Q^{1/2})$ is detectable.
	\end{enumerate}
	
	Assumption \ref{A:lipschitz} imposes uniform Lipschitz continuity and smoothness of the SCDC matrices; \ref{A:bounded} bounds the pseudo-linear flow; and \ref{A:stabilizable} enables the construction of the terminal ingredients from the linearization at the origin.
	For notational convenience, \ref{A:lipschitz} and \ref{A:bounded} are stated on $\BBR^n\times\BBR^m$; both hold globally for the saturated-input models of Section~\ref{sec:examples}, whose coefficient maps are built from bounded functions.
	If \ref{A:lipschitz} and \ref{A:bounded} hold only on a compact set, then the results of this section hold verbatim provided that the predicted and closed-loop trajectories remain in that set.
	Note that \eqref{eq:fSCDC} and \ref{A:lipschitz} imply that $f$ is continuously differentiable with $\nabla_x f(0,0) = A_0$ and $\nabla_u f(0,0) = B_0$, so that $(A_0,B_0)$ is the Jacobian linearization of \eqref{eq:xk1} at the origin.
	
	The results so far concern a single control update.
	Establishing that the closed loop is well posed and convergent requires the terminal ingredients, to which we now turn.
	
	\subsection{Terminal Set}
	\label{sec:terminal_set}
	
	This subsection constructs the terminal set associated with the terminal control law \eqref{eq:kappaf_def} and establishes the invariance and descent properties used in the feasibility and stability analysis.
	
	\begin{proposition}\label{prop:terminal_set}\rm
		Consider the system \eqref{eq:xk1}, where for all $k\in\BBN$, $u_k=\kappa_\rmf(x_k)$ and the terminal control law $\kappa_\rmf$ is given by \eqref{eq:Kf_def} and \eqref{eq:kappaf_def}. Assume that \ref{A:lipschitz} and \ref{A:stabilizable} hold, $Q\succ 0$, $0\in{\rm int}\,\SX$, and $0\in{\rm int}\,\SU$. Define $V_\rmf\colon\BBR^n\to[0,\infty)$ by $V_\rmf(x)\triangleq x^\rmT P x$, where $P$ is given by \eqref{eq:DARE}.
		Then, there exists $\alpha\in(0,\infty)$ such that the sublevel set
		\begin{equation}
			\SX_\rmf\triangleq \{x\in\BBR^n\colon V_\rmf(x)\le \alpha\}\label{eq:Xf_def}
		\end{equation}
		satisfies the following statements:
		\begin{enumerate}[label=({\it \roman*}),itemindent=1pt]
			\item\label{ts:i} $\SX_\rmf\subseteq \SX$, and, for all $x\in\SX_\rmf$, $\kappa_\rmf(x)\in\SU$.
			\item\label{ts:ii} For all $x\in\SX_\rmf$, $f(x,\kappa_\rmf(x))\in\SX_\rmf$.
			\item\label{ts:iii} For all $x\in\SX_\rmf$,
			\begin{equation}
				V_\rmf(f(x,\kappa_\rmf(x)))-V_\rmf(x)\le - x^\rmT Q x - \kappa_\rmf(x)^\rmT R\, \kappa_\rmf(x).\label{eq:Vf_descent}
			\end{equation}
		\end{enumerate}
	\end{proposition}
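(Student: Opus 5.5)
We follow the quasi-infinite-horizon argument of \cite{chen1998quasi}, writing the closed loop under $\kappa_\rmf$ as a perturbation of $A_{\rm cl}$. Because of \eqref{eq:fSCDC},
\begin{equation*}
f(x,K_\rmf x) = A_{\rm cl}x + \phi(x),\qquad \phi(x)\triangleq \big(A(x,K_\rmf x)-A_0\big)x+\big(B(x,K_\rmf x)-B_0\big)K_\rmf x .
\end{equation*}
By \ref{A:lipschitz},
\begin{equation*}
\|\phi(x)\|\le (L_A+L_B\|K_\rmf\|)(1+\|K_\rmf\|)\|x\|^2\triangleq L_\phi\|x\|^2 ,
\end{equation*}
so the remainder is quadratic. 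Only the Lipschitz bounds on $A,B$ are needed here; global boundedness \ref{A:bounded} is not.

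\emph{Descent inequality.} First expand
\begin{equation*}
V_\rmf(f)-V_\rmf(x) = x^\rmT(A_{\rm cl}^\rmT PA_{\rm cl}-P)x + 2x^\rmT A_{\rm cl}^\rmT P\phi + \phi^\rmT P\phi .
\end{equation*}
Substituting \eqref{eq:Riccati_identity} turns the first term into $-x^\rmT Q_\star x = -2x^\rmT Qx - 2x^\rmT K_\rmf^\rmT RK_\rmf x$. Half of this already equals the right-hand side of \eqref{eq:Vf_descent}, so the remaining half must absorb the cross and quadratic terms. It therefore suffices to have
\begin{equation*}
2\|A_{\rm cl}\|\lambda_{\max}(P)L_\phi\|x\|^3+\lambda_{\max}(P)L_\phi^2\|x\|^4\le \lambda_{\min}(Q)\|x\|^2 .
\end{equation*}
Here we drop the nonnegative $K_\rmf^\rmT RK_\rmf$ contribution and use $Q\succ0$. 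This inequality holds for all $\|x\|\le r_1$, where $r_1>0$ is chosen explicitly, for instance as the positive root of the corresponding quadratic in $\|x\|$. This step is the crux of the proof: the inflation by $2$ in \eqref{eq:DARE} is exactly what supplies the spare $x^\rmT Qx$ margin, and $Q\succ0$ is what makes the margin dominate the $O(\|x\|^3)$ terms.

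\emph{Constraints.} Since $0\in{\rm int}\,\SX$ and $0\in{\rm int}\,\SU$, there are $r_2,r_3>0$ with $\{\|x\|\le r_2\}\subseteq\SX$ and $\{\|u\|\le r_3\}\subseteq\SU$. Set $r\triangleq\min\{r_1,r_2,r_3/\max(\|K_\rmf\|,1)\}$ and $\alpha\triangleq\lambda_{\min}(P)r^2>0$. On $\SX_\rmf$ we then have $\|x\|^2\le V_\rmf(x)/\lambda_{\min}(P)\le r^2$, and the three statements follow.
\begin{itemize}
\item[\ref{ts:i}] $x\in\SX$ because $\|x\|\le r_2$, and $\|K_\rmf x\|\le r_3$ gives $\kappa_\rmf(x)\in\SU$.
\item[\ref{ts:iii}] This holds because $\|x\|\le r_1$, by the descent step above.
\item[\ref{ts:ii}] Since the right-hand side of \eqref{eq:Vf_descent} is nonpositive, $V_\rmf(f(x,\kappa_\rmf(x)))\le V_\rmf(x)\le\alpha$, which is invariance.
\end{itemize}

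All steps other than the descent inequality are routine. The only thing to check there is that no hidden constant depends on $\alpha$, which holds because $r_1$ is determined by $L_\phi$, $P$, $A_{\rm cl}$, and $Q$ alone.
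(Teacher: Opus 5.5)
Your proof is correct and follows essentially the same route as the paper. Both arguments use the exact SCDC remainder $\phi$ with the quadratic bound from \ref{A:lipschitz}, expand $V_\rmf$ using \eqref{eq:Riccati_identity}, absorb the cubic and quartic terms into half of $x^\rmT Q_\star x$ on a small ball, and take $\alpha$ as $\lambda_{\min}(P)$ times the smallest squared radius. The only differences are cosmetic: you use the margin $\lambda_{\min}(Q)\|x\|^2$ where the paper uses $\tfrac12\lambda_{\min}(Q_\star)\|x\|^2$, and your guard $\max(\|K_\rmf\|,1)$ avoids the division by $\|K_\rmf\|$ in \eqref{eq:alpha_def}, which the paper's formula would need if $K_\rmf=0$.
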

	\begin{proof}
		The argument is the quasi-infinite-horizon construction of \cite{chen1998quasi}, and we indicate only the steps at which the factorization \eqref{eq:fSCDC} enters.
		Define $\phi(x)\triangleq f(x,\kappa_\rmf(x))-A_{\rm cl}x$, so that \eqref{eq:fSCDC}, \eqref{eq:kappaf_def}, and \eqref{eq:Acl_def} give
		\begin{equation}
			\phi(x)=\(A(x,\kappa_\rmf(x))-A_0\)x+\(B(x,\kappa_\rmf(x))-B_0\)K_\rmf x.\label{eq:phi_def}
		\end{equation}
		The factorization enters at exactly this point: \eqref{eq:phi_def} is an exact identity rather than a Taylor remainder, so \ref{A:lipschitz} bounds the deviation directly, giving $\|\phi(x)\|\le c_\phi\|x\|^2$, where $c_\phi\triangleq(L_A+L_B\|K_\rmf\|)(1+\|K_\rmf\|)$.
		Writing $f(x,\kappa_\rmf(x))=A_{\rm cl}x+\phi(x)$ and using \eqref{eq:Riccati_identity} yields
		\begin{align*}
			&V_\rmf(f(x,\kappa_\rmf(x)))-V_\rmf(x)\\
			&\qquad= -x^\rmT Q_\star x+2x^\rmT A_{\rm cl}^\rmT P\phi(x)+\phi(x)^\rmT P\phi(x).
		\end{align*}
		Let $r\in(0,\infty)$ satisfy $2\|A_{\rm cl}\|\,\|P\|\,c_\phi\,r+\|P\|c_\phi^2r^2\le\tfrac{1}{2}\lambda_{\min}(Q_\star)$.
		Then the quadratic bound on $\phi$ implies that the last two terms are dominated by $\tfrac{1}{2}\lambda_{\min}(Q_\star)\|x\|^2$ whenever $\|x\|\le r$, leaving $-\tfrac{1}{2}x^\rmT Q_\star x$, which, since $Q_\star=2Q+2K_\rmf^\rmT RK_\rmf$, is exactly the right-hand side of \eqref{eq:Vf_descent}.
		This is the role of the inflation in \eqref{eq:DARE} noted in Remark~\ref{rem:inflation}.
		Since $0\in{\rm int}\,\SX$ and $0\in{\rm int}\,\SU$, there exist $r_x,r_u\in(0,\infty)$ such that the balls of those radii are contained in $\SX$ and $\SU$, and setting
		\begin{equation}
			\alpha\triangleq \lambda_{\min}(P)\min\{r^2,\ r_x^2,\ r_u^2/\|K_\rmf\|^2\}\label{eq:alpha_def}
		\end{equation}
		gives $\|x\|\le\min\{r,r_x,r_u/\|K_\rmf\|\}$ for all $x\in\SX_\rmf$, which confirms \ref{ts:iii}.
		Statement \ref{ts:i} follows because $\|x\|\le r_x$ and $\|\kappa_\rmf(x)\|\le\|K_\rmf\|\,\|x\|\le r_u$ on $\SX_\rmf$, and \ref{ts:ii} follows because \eqref{eq:Vf_descent} with $Q\succ0$ and $R\succ0$ implies $V_\rmf(f(x,\kappa_\rmf(x)))\le V_\rmf(x)\le\alpha$.
	\end{proof}
	
	\subsection{Receding-Horizon Optimal Control Problem}
	\label{sec:ocp}
	
	For the accuracy, feasibility, and stability analysis, consider the receding-horizon optimal control problem with nonlinear prediction dynamics, that is, for each $k\in\BBN$,
	\begin{align}
		{\rm OCP}_k\colon\quad &\min_{u_{k,1},\ldots,u_{k,\ell-1}}\ J_k(u_{k,1},\ldots,u_{k,\ell-1})\nn\\
		&\mbox{subject to \eqref{eq:xfka}, \eqref{eq:decoupled_con}, and } x_{k,\ell}\in\SX_\rmf,
		\label{eq:OCP}
	\end{align}
	where $J_k$ is given by \eqref{Jdefn} with $P_k= P$ given by \eqref{eq:DARE}, and $\SX_\rmf$ is given by \eqref{eq:Xf_def}.
	Note that, for all $k\in\BBN$, ${\rm OCP}_k$ is the problem that the SCDC iteration of Section~\ref{sec:iterQP} approximates; Propositions~\ref{prop:contraction} and \ref{prop:kkt_gap} quantify the approximation, and Theorem~\ref{thm:relaxed_lyapunov} addresses the suboptimal case in which the applied sequence is not a minimizer of ${\rm OCP}_k$.
	Since \eqref{eq:xk1_pred} implies that, for all $k\in\BBN$, the data of ${\rm OCP}_k$ are determined by the pair $(x_k,u_k)$, define the extended state
	\begin{equation}
		z_k\triangleq \matl x_k\\ u_k\matr\in\BBR^{n+m},
		\label{eq:zk_def}
	\end{equation}
	and define the feasible set $\SF_\ell\subseteq\BBR^{n+m}$ of ${\rm OCP}_k$ by
	\begin{align}
		\SF_\ell\triangleq \Big\{ &\matl x^\rmT& u^\rmT\matr^\rmT\in\BBR^{n+m}\colon \mbox{There exists}\, u_1,\ldots,u_{\ell-1}\in\SU\nn\\
		& \mbox{ such that~}\mbox{with } x_1=f(x,u) \mbox{ and } x_{j+1}=f(x_j,u_j),\nn\\
		& x_\ell\in\SX_\rmf\mbox{ and }\mbox{for all } j\in\{1,\ldots,\ell-1\}, x_j\in\SX   \Big\}.
		\label{eq:Fl_def}
	\end{align}
	
	Both the warm start \eqref{eq:warm_start} and the feasibility and stability arguments below rest on the same construction, namely, discarding the first element of a control sequence and appending the terminal control law.
	It is convenient to name that construction once.
	For all $k\in\BBN$ and all $U=\matl u_{1}^\rmT&\cdots&u_{\ell-1}^\rmT\matr^\rmT$ that are feasible for ${\rm OCP}_k$, let $x_1,\ldots,x_\ell$ denote the predicted trajectory generated by \eqref{eq:xfka} from $x_1=x_{k,1}$ under $U$, and define the \emph{shifted candidate} $\SSS_k(U)\in\BBR^{m(\ell-1)}$ by
	\begin{equation}
		\SSS_k(U)\triangleq \matl u_{2}^\rmT&\cdots&u_{\ell-1}^\rmT&\kappa_\rmf(x_\ell)^\rmT\matr^\rmT.
		\label{eq:shift_op}
	\end{equation}
	Thus $\SSS_k(U)$ is a control sequence for ${\rm OCP}_{k+1}$, and the warm start \eqref{eq:warm_start} is precisely $U_{k+1|1}=\SSS_k(U_{k|\rho_k})$.
	
	\subsection{Contraction of the SCDC Iteration}
	\label{sec:contraction}
	
	This subsection analyzes the iteration map $\SM$ defined by \eqref{eq:Hmap_def} in the case where the constraints \eqref{eq:QP_ineq} and \eqref{eq:QP_box} are inactive, so that each iterate is given explicitly by \eqref{eq:Hmap}.
	Since $0\in{\rm int}\,\SX$ and $0\in{\rm int}\,\SU$, the constraints are inactive in a neighborhood of the origin, and thus the analysis applies to the constrained algorithm for all $\|x_k\|$ and $\|u_k\|$ in that neighborhood.
	Since the step index $k$ enters \eqref{eq:Hmap} only through the anchor $x_{k,1}$, we make that dependence explicit by writing the map as a function of an anchor $\xi\in\BBR^n$ and a control sequence $U$.
	Specifically, for all $\xi\in\BBR^n$ and $U\isdef\matl u_1^\rmT&\cdots&u_{\ell-1}^\rmT\matr^\rmT\in\BBR^{m(\ell-1)}$, let $x_1(\xi,U)\triangleq \xi$ and, for all $j\in\{1,\ldots,\ell-1\}$, $x_{j+1}(\xi,U)\triangleq f(x_j(\xi,U),u_j)$ denote the rollout \eqref{eq:xkj+1i} initialized at $\xi$ under $U$; let $\Phi(\xi,U)$, $\Gamma(\xi,U)$, and $\SH(\xi,U)$ denote \eqref{eq:Phidef}, \eqref{eq:Gammadef}, and \eqref{eq:Hessian} evaluated with $A_j = A(x_j(\xi,U),u_j)$ and $B_j = B(x_j(\xi,U),u_j)$; and write
	\begin{equation}
		\SM(\xi,U) = -\SH(\xi,U)^{-1}\Gamma(\xi,U)^\rmT \SQ\,\Phi(\xi,U)\, \xi,
		\label{eq:M_explicit}
	\end{equation}
	where $\SQ\triangleq{\rm diag}(Q,\ldots,Q,P)$ and $\SR\triangleq{\rm diag}(R,\ldots,R)$.
	For all $k\in\BBN$, the iterates \eqref{eq:Hmap_def} of iSCD-MPC are generated by $U_{k|i}=\SM(x_{k,1},U_{k|i-1})$.
	
	Let  $\delta_U\in(0,\infty)$, and define
	\begin{equation}
		\SB\triangleq\{U\in\BBR^{m(\ell-1)}\colon \|U\|\le \delta_U\}.
	\end{equation} 
	
	\begin{proposition}\label{prop:contraction}\rm
		Consider the system \eqref{eq:xk1} controlled by iSCD-MPC \eqref{eq:fSCDC}--\eqref{eq:applied_control}, where the iteration map $\SM$ is given by \eqref{eq:M_explicit}. 
		Assume that \ref{A:lipschitz} and \ref{A:bounded} hold. 	Let  $k\in\BBN$, and  assume that there exist $\gamma\in(0,1)$, $c_0\in[1,\infty)$, and $c_\SM\in(0,\infty)$ such that
		\begin{equation}
			\|x_{k,1}\|\le \min\left\{\frac{\delta_U}{c_0},\ \frac{\gamma}{c_\SM}\right\}.
			\label{eq:xk1_small}
		\end{equation}
		Then, the following statements hold:
		\begin{enumerate}[label=({\it \roman*}),itemindent=1pt]
			\item\label{ct:i} For all $U\in\SB$, $\SM(x_{k,1},U)\in\SB$. In addition, for all $U,\hat U\in\SB$,
			\begin{equation}
				\|\SM(x_{k,1},U)-\SM(x_{k,1},\hat U)\|\le \gamma \|U-\hat U\|.
				\label{eq:contraction_ineq}
			\end{equation}
			\item\label{ct:ii} $\SM$ has a unique fixed point $U_{k,\star}\in\SB$ satisfying $\SM(x_{k,1}, U_{k,\star}) = U_{k,\star}$.
			\item\label{ct:iii} For all $U_{k|1}\in\SB$ and all $i\in\BBZ^+$,
			\begin{equation}
				\|U_{k|i}-U_{k,\star}\|\le \gamma^{i-1}\|U_{k|1}-U_{k,\star}\|,
				\label{eq:geo_convergence}
			\end{equation}
			and thus, as $i\to\infty$, $U_{k|i}\to U_{k,\star}$.
		\end{enumerate}
	\end{proposition}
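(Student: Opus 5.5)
The plan is to write $\SM(\xi,U)=G(\xi,U)\,\xi$, where $G(\xi,U)\triangleq-\SH(\xi,U)^{-1}\Gamma(\xi,U)^\rmT\SQ\,\Phi(\xi,U)\in\BBR^{m(\ell-1)\times n}$. Both parts of (i) will come from two uniform estimates on $G$ over $\SB$: a bound $\|G(\xi,U)\|\le c_0$ and a Lipschitz bound $\|G(\xi,U)-G(\xi,\hat U)\|\le c_\SM\|U-\hat U\|$. The statement takes $c_0,c_\SM$ as given; in the proof I would identify them as these constants, which depend only on $\bar a,\bar b,L_A,L_B,\ell,\SQ,\SR,\delta_U$.

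\textbf{Step 1: the bound on $G$.} By \ref{A:bounded}, every block of $\Phi$ and $\Gamma$ in \eqref{eq:Phidef}--\eqref{eq:Gammadef} is a product of at most $\ell-1$ factors bounded by $\bar a$ or $\bar b$. Hence $\|\Phi\|\le c_\Phi$ and $\|\Gamma\|\le c_\Gamma$ uniformly in $(\xi,U)$. Since $\SH\succeq\lambda_{\min}(\SR)I$, we also have $\|\SH^{-1}\|\le 1/\lambda_{\min}(R)$. Together these give $\|G\|\le c_\Gamma\|\SQ\|c_\Phi/\lambda_{\min}(R)$, and we can take $c_0$ as the maximum of this quantity and $1$. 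Then \eqref{eq:xk1_small} gives $\|\SM(x_{k,1},U)\|\le c_0\|x_{k,1}\|\le\delta_U$, which is invariance of $\SB$.

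\textbf{Step 2: the Lipschitz estimate, which is the main obstacle.} First, compare the rollouts $x_j=x_j(\xi,U)$ and $\hat x_j=x_j(\xi,\hat U)$. From \eqref{eq:fSCDC},
\[
x_{j+1}-\hat x_{j+1}=A_j(x_j-\hat x_j)+(A_j-\hat A_j)\hat x_j+B_j(u_j-\hat u_j)+(B_j-\hat B_j)\hat u_j .
\]
With \ref{A:lipschitz} and \ref{A:bounded}, and with $\|\hat x_j\|$ bounded on $\SB$ (say $\|\xi\|\le\delta_U$ and \ref{A:bounded}), a discrete Gr\"onwall induction from $x_1=\hat x_1=\xi$ yields $\|x_j-\hat x_j\|\le c_x\|U-\hat U\|$. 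Applying \ref{A:lipschitz} again gives $\|A_j-\hat A_j\|+\|B_j-\hat B_j\|\le c_1\|U-\hat U\|$. Telescoping the products in $\Phi$ and $\Gamma$ (replace one factor at a time, bounding the remaining factors by $\bar a,\bar b$) then gives Lipschitz bounds on $\Phi$ and $\Gamma$. For $\SH$, I would use the resolvent identity $\SH^{-1}-\hat\SH^{-1}=\SH^{-1}(\hat\SH-\SH)\hat\SH^{-1}$, where $\|\hat\SH-\SH\|\le 2\|\SQ\|c_\Gamma\|\Gamma-\hat\Gamma\|$. Expanding the product difference in $G$ term by term then produces $c_\SM$. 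This bookkeeping is tedious, and the one point needing care is that the bound on $\hat x_j$ must be uniform over $\SB$. That uniformity is what keeps $c_\SM$ independent of $U$.

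\textbf{Step 3: contraction and parts (ii)--(iii).} The difference of the iterates is $\SM(x_{k,1},U)-\SM(x_{k,1},\hat U)=(G(x_{k,1},U)-G(x_{k,1},\hat U))x_{k,1}$. Its norm is at most $c_\SM\|x_{k,1}\|\,\|U-\hat U\|\le\gamma\|U-\hat U\|$ by \eqref{eq:xk1_small}, which is \eqref{eq:contraction_ineq}. Since $\SB$ is closed in a complete space and $\SM(x_{k,1},\cdot)$ maps $\SB$ into itself contractively, Banach's fixed-point theorem gives the unique fixed point $U_{k,\star}$ in (ii). For (iii), induction on $i$ with $U_{k|i}=\SM(x_{k,1},U_{k|i-1})$ keeps every iterate in $\SB$. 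Using $U_{k,\star}=\SM(x_{k,1},U_{k,\star})$ together with \eqref{eq:contraction_ineq} gives $\|U_{k|i}-U_{k,\star}\|\le\gamma\|U_{k|i-1}-U_{k,\star}\|$, and iterating this yields \eqref{eq:geo_convergence}.
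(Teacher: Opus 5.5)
Your proposal is correct and follows essentially the same route as the paper. Uniform bounds on $\SH^{-1}$, $\Phi$, $\Gamma$ give invariance of $\SB$ with $c_0$; a Gr\"onwall-type rollout Lipschitz estimate (using the uniform bound on $\hat x_j$ over $\SB$), telescoped products, and the resolvent identity give the Lipschitz constant $c_\SM\|x_{k,1}\|$; and Banach's theorem yields (ii)--(iii). Your factorization $\SM(\xi,U)=G(\xi,U)\,\xi$ is only a cleaner packaging of the paper's three-term decomposition of $\SM(U)-\SM(\hat U)$ multiplied by $x_{k,1}$.
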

	\begin{proof}
		Let $k\in\BBN$, and throughout the proof, abbreviate $x_j(U)\triangleq x_j(x_{k,1},U)$, $\Phi(U)\triangleq\Phi(x_{k,1},U)$, $\Gamma(U)\triangleq\Gamma(x_{k,1},U)$, $\SH(U)\triangleq\SH(x_{k,1},U)$, and $\SM(U)\triangleq\SM(x_{k,1},U)$.
		Since $\SQ\succeq0$ and, for all $U\in\SB$, \eqref{eq:Hessian} implies $\SH(U)\succeq\lambda_{\min}(\SR)I_{m(\ell-1)}$, it follows that
		\begin{equation}
			\|\SH(U)^{-1}\|\le \lambda_{\min}(\SR)^{-1}.\label{eq:Hinv_bound}
		\end{equation}
		Furthermore, since each block of $\Phi(U)$ and $\Gamma(U)$ is a product of at most $\ell$ matrices, each of which is bounded by $\max\{\bar a,\bar b,1\}$ from \ref{A:bounded}, there exist $\bar\phi,\bar g\in(0,\infty)$, depending only on $\bar a$, $\bar b$, and $\ell$, such that, for all $U\in\SB$,
		\begin{equation}
			\|\Phi(U)\|\le\bar\phi,\qquad \|\Gamma(U)\|\le\bar g.\label{eq:PhiGam_bound}
		\end{equation}
		It follows from \eqref{eq:M_explicit}, \eqref{eq:Hinv_bound}, and \eqref{eq:PhiGam_bound} that, for all $U\in\SB$,
		\begin{equation}
			\|\SM(U)\|\le c_0\|x_{k,1}\|,
			\label{eq:M_bound}
		\end{equation}
		where $c_0\triangleq \max\{1,\,\lambda_{\min}(\SR)^{-1}\bar g\,\|\SQ\|\,\bar\phi\}\ge 1$.
		Hence, \eqref{eq:xk1_small} and \eqref{eq:M_bound} imply that, for all $U\in\SB$, $\|\SM(U)\|\le \delta_U$, which proves $\SM(\SB)\subseteq\SB$.
		
		Next, let $U,\hat U\in\SB$, and write $x_j\triangleq x_j(U)$ and $\hat x_j\triangleq x_j(\hat U)$.
		It follows from \eqref{eq:fSCDC}, \ref{A:bounded}, and the rollout recursion that, for all $j\in\{1,\ldots,\ell\}$,
		\begin{equation}
			\|x_j\|\le \bar a^{\,j-1}\|x_{k,1}\|+\bar b\sum_{s=1}^{j-1}\bar a^{\,j-1-s}\|u_s\|\le c_1\(\|x_{k,1}\|+\delta_U\),
			\label{eq:rollout_bound}
		\end{equation}
		where $c_1\triangleq \max\{1,\bar a^{\,\ell-1}\}(1+\bar b\,\ell)$.
		Since \eqref{eq:xk1_small} and $c_0\ge1$ imply $\|x_{k,1}\|\le \delta_U$, \eqref{eq:rollout_bound} implies $\|x_j\|\le 2 c_1\delta_U$.
		Next, it follows from \eqref{eq:fSCDC}, \eqref{eq:LipA}, \eqref{eq:LipB}, \eqref{eq:AB_bound}, and \eqref{eq:rollout_bound} that, for all $j\in\{1,\ldots,\ell-1\}$,
		\begin{align}
			\|x_{j+1}-\hat x_{j+1}\|&\le \|A(x_j,u_j)\|\,\|x_j-\hat x_j\|\nn\\
			&\quad+\|A(x_j,u_j)-A(\hat x_j,\hat u_j)\|\,\|\hat x_j\|\nn\\
			&\quad+\|B(x_j,u_j)\|\,\|u_j-\hat u_j\|\nn\\
			&\quad+\|B(x_j,u_j)-B(\hat x_j,\hat u_j)\|\,\|\hat u_j\|\nn\\
			&\le \bar c_2 \(\|x_j-\hat x_j\|+\|u_j-\hat u_j\|\),
			\label{eq:rollout_lip}
		\end{align}
		where $\bar c_2\triangleq \bar a+\bar b+ (L_A+L_B)(2 c_1+1)\delta_U$.
		Since $x_1=\hat x_1 = x_{k,1}$, iterating \eqref{eq:rollout_lip} yields, for all $j\in\{1,\ldots,\ell\}$,
		\begin{equation}
			\|x_j-\hat x_j\|\le c_3\|U-\hat U\|,
			\label{eq:xj_lip}
		\end{equation}
		where $c_3\triangleq \ell\,\max\{1,\bar c_2^{\,\ell}\}$.
		Since each block of $\Phi(U)$ and $\Gamma(U)$ is a product of at most $\ell$ factors, each of which is bounded by $\max\{\bar a,\bar b,1\}$ and, by \eqref{eq:LipA}, \eqref{eq:LipB}, and \eqref{eq:xj_lip}, is Lipschitz in $U$ with constant $\max\{L_A,L_B\}(c_3+1)$, it follows that there exist $c_\Phi,c_\Gamma\in(0,\infty)$, depending only on $\bar a$, $\bar b$, $L_A$, $L_B$, $\ell$, and $\delta_U$, such that, for all $U,\hat U\in\SB$,
		\begin{gather}
			\|\Phi(U)-\Phi(\hat U)\|\le c_\Phi\|U-\hat U\|,\label{eq:Phi_lip}\\
			\|\Gamma(U)-\Gamma(\hat U)\|\le c_\Gamma\|U-\hat U\|.\label{eq:Gam_lip}
		\end{gather}
		To prove \eqref{eq:contraction_ineq}, write, for brevity, $\SH\triangleq\SH(U)$, $\hat\SH\triangleq\SH(\hat U)$, $\Gamma\triangleq\Gamma(U)$, $\hat\Gamma\triangleq\Gamma(\hat U)$, $\Phi\triangleq\Phi(U)$, and $\hat\Phi\triangleq\Phi(\hat U)$.
		It follows from \eqref{eq:M_explicit} that
		\begin{align}
			\SM(U)-\SM(\hat U)
			&= -\Big[\(\SH^{-1}-\hat\SH^{-1}\)\hat\Gamma^\rmT \SQ\,\hat\Phi\nn\\
			&\quad +\SH^{-1}\(\Gamma-\hat\Gamma\)^\rmT \SQ\,\hat\Phi\nn\\
			&\quad +\SH^{-1}\Gamma^\rmT \SQ\,\(\Phi-\hat\Phi\)\Big]\, x_{k,1}.\label{eq:M_diff}
		\end{align}
		Using the identity $\SH^{-1}-\hat\SH^{-1} = \SH^{-1}(\hat\SH-\SH)\hat\SH^{-1}$ together with \eqref{eq:Hessian}, \eqref{eq:Hinv_bound}, \eqref{eq:PhiGam_bound}, and \eqref{eq:Gam_lip} yields
		\begin{equation}
			\|\SH^{-1}-\hat\SH^{-1}\|\le 2\lambda_{\min}(\SR)^{-2}\bar g\,\|\SQ\|\, c_\Gamma\|U-\hat U\|.\label{eq:Hinv_lip}
		\end{equation}
		Applying the triangle inequality to \eqref{eq:M_diff} and substituting \eqref{eq:Hinv_bound}, \eqref{eq:PhiGam_bound}, \eqref{eq:Phi_lip}, \eqref{eq:Gam_lip}, and \eqref{eq:Hinv_lip} yields
		\begin{equation}
			\|\SM(U)-\SM(\hat U)\|\le c_\SM\|x_{k,1}\|\,\|U-\hat U\|,
			\label{eq:M_lip}
		\end{equation}
		where
		$c_\SM\triangleq \lambda_{\min}(\SR)^{-1}\|\SQ\|\big(2\lambda_{\min}(\SR)^{-1}\bar g^2\|\SQ\|\,\bar\phi\, c_\Gamma+\bar\phi\, c_\Gamma+\bar g\, c_\Phi\big)$.
		Hence, \eqref{eq:xk1_small} and \eqref{eq:M_lip} imply \eqref{eq:contraction_ineq}, which proves \ref{ct:i}.
		Statements \ref{ct:ii} and \ref{ct:iii} follow from \ref{ct:i} and the Banach fixed-point theorem \cite[Appendix~B.1]{khalil2002nonlinear} applied to $\SM(x_{k,1},\cdot)$ on the closed ball $\SB$.
	\end{proof}
	
	Proposition~\ref{prop:contraction} shows that the SCDC iteration is locally a contraction whose rate is proportional to $\|x_{k,1}\|$; hence, the contraction rate decreases as the origin is approached.
	The condition \eqref{eq:xk1_small} is stated in terms of the anchor $x_{k,1}$ because that is the quantity the iteration map \eqref{eq:M_explicit} actually depends on.
	For design purposes, however, the quantity available at step $k$ is the extended state $z_k$ given by \eqref{eq:zk_def}, and the following consequence of \ref{A:bounded} restates the condition in those terms.
	
	\begin{corollary}\label{cor:contraction_zk}\rm
		Assume that the hypotheses of Proposition~\ref{prop:contraction} hold, let $k\in\BBN$, and define $L_\rmf\triangleq\sqrt2\max\{\bar a,\bar b\}$.
		If
		\begin{equation}
			\|z_k\|\le \frac{1}{L_\rmf}\min\left\{\frac{\delta_U}{c_0},\ \frac{\gamma}{c_\SM}\right\},
			\label{eq:zk_small}
		\end{equation}
		then \eqref{eq:xk1_small} holds, and thus statements \ref{ct:i}--\ref{ct:iii} of Proposition~\ref{prop:contraction} hold.
	\end{corollary}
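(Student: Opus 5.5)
The plan is to show that \eqref{eq:zk_small} forces $\|x_{k,1}\|$ to be small enough for \eqref{eq:xk1_small}, and then invoke Proposition~\ref{prop:contraction} directly. The main estimate is a linear bound $\|x_{k,1}\|\le L_\rmf\|z_k\|$, which follows from the factorization \eqref{eq:fSCDC} and the uniform bounds in \ref{A:bounded}.

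First, I will use \eqref{eq:xk1_pred} and \eqref{eq:fSCDC} to write $x_{k,1}=f(x_k,u_k)=A(x_k,u_k)x_k+B(x_k,u_k)u_k$. By \eqref{eq:AB_bound} and the triangle inequality,
\begin{equation*}
\|x_{k,1}\|\le \bar a\|x_k\|+\bar b\|u_k\|\le \max\{\bar a,\bar b\}\big(\|x_k\|+\|u_k\|\big).
\end{equation*}
Next, the Cauchy--Schwarz inequality in $\BBR^2$ gives $\|x_k\|+\|u_k\|\le\sqrt2\,(\|x_k\|^2+\|u_k\|^2)^{1/2}=\sqrt2\,\|z_k\|$, by \eqref{eq:zk_def}. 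Together these yield $\|x_{k,1}\|\le L_\rmf\|z_k\|$.

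Multiplying \eqref{eq:zk_small} by $L_\rmf$ then gives exactly \eqref{eq:xk1_small}. The constants $c_0$, $c_\SM$, $\gamma$, and $\delta_U$ are the same as in Proposition~\ref{prop:contraction}, so its hypotheses hold and statements \ref{ct:i}--\ref{ct:iii} follow.

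None of these steps is hard; the only point to check is that the factor $\sqrt2$ comes from passing from the sum of the block norms to the Euclidean norm of the stacked vector, which is what fixes the constant $L_\rmf$.
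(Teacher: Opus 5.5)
Your proposal is correct and follows the paper's proof: both bound $\|x_{k,1}\|=\|A(x_k,u_k)x_k+B(x_k,u_k)u_k\|\le\bar a\|x_k\|+\bar b\|u_k\|\le L_\rmf\|z_k\|$ using \ref{A:bounded} and Cauchy--Schwarz, and then multiply \eqref{eq:zk_small} by $L_\rmf$. The only difference is cosmetic. The paper applies Cauchy--Schwarz directly to $(\bar a,\bar b)$ and $(\|x_k\|,\|u_k\|)$, whereas you first factor out $\max\{\bar a,\bar b\}$, and both give the same constant $L_\rmf$.
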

	\begin{proof}
		It follows from \eqref{eq:xk1_pred}, \eqref{eq:fSCDC}, and \ref{A:bounded} that
		\begin{align*}
			\|x_{k,1}\| &= \|A(x_k,u_k)x_k+B(x_k,u_k)u_k\|\\
			&\le \bar a\|x_k\|+\bar b\|u_k\|\le L_\rmf\|z_k\|,
		\end{align*}
		where the last inequality follows from the Cauchy-Schwarz inequality applied to $\matl \bar a&\bar b\matr$ and $\matl \|x_k\|&\|u_k\|\matr^\rmT$.
		Hence \eqref{eq:zk_small} implies \eqref{eq:xk1_small}.
	\end{proof}

	The next result translates \eqref{eq:geo_convergence} into an explicit bound on the number of iterations $\rho_k$ defined by \eqref{eq:rhokk}, which is the quantity that determines the per-step cost in Section~\ref{sec:complexity}.
	
	\begin{proposition}\label{prop:iteration_bound}\rm
		Consider the system \eqref{eq:xk1} controlled by iSCD-MPC \eqref{eq:fSCDC}--\eqref{eq:applied_control} with stopping criterion \eqref{eq:rhokk}. Assume  that all the hypotheses of Proposition~\ref{prop:contraction} hold. Let $k\in\BBN$, and assume that $U_{k|1}\in\SB$ and $U_{k|2}\ne U_{k|1}$.
		Then, the termination iteration $\rho_k$ given by \eqref{eq:rhokk} satisfies
		\begin{equation}
			\rho_k \le \min\left\{\rho,\ 2+\max\left\{0,\ \left\lfloor \tfrac{\ln(\varepsilon/\|U_{k|2}-U_{k|1}\|)}{\ln\gamma}\right\rfloor + 1\right\}\right\}.
			\label{eq:iter_bound}
		\end{equation}
	\end{proposition}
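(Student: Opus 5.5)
The plan is to bound the successive differences of the iterates geometrically and then read off from the stopping rule \eqref{eq:rhokk} the first index at which the bound drops below $\varepsilon$.

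\textbf{Step 1: the iterates stay in $\SB$.} Since $U_{k|1}\in\SB$, statement \ref{ct:i} of Proposition~\ref{prop:contraction} gives $\SM(x_{k,1},\SB)\subseteq\SB$. By induction on $i$, every iterate $U_{k|i}=\SM(x_{k,1},U_{k|i-1})$ lies in $\SB$.

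\textbf{Step 2: geometric decay of the differences.} Apply the contraction inequality \eqref{eq:contraction_ineq} to the pair $U_{k|i-1},U_{k|i-2}$. This gives
\begin{equation*}
\|U_{k|i}-U_{k|i-1}\|\le\gamma\|U_{k|i-1}-U_{k|i-2}\|,\quad i\ge3.
\end{equation*}
Iterating, for all $i\ge2$,
\begin{equation*}
\|U_{k|i}-U_{k|i-1}\|\le\gamma^{i-2}\,d,\qquad d\triangleq\|U_{k|2}-U_{k|1}\|>0.
\end{equation*}
The positivity of $d$ is where the hypothesis $U_{k|2}\ne U_{k|1}$ enters. Working with differences rather than with \eqref{eq:geo_convergence} avoids any reference to the unknown fixed point $U_{k,\star}$.

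\textbf{Step 3: locate the stopping index.} By \eqref{eq:rhokk}, $\rho_k\le\rho$ always. It also suffices to exhibit an index $i^\ast\in\{2,\ldots,\rho\}$ with $\gamma^{i^\ast-2}d<\varepsilon$; then $\rho_k\le i^\ast$, and if no such index is $\le\rho$ the bound $\rho_k\le\rho$ already gives the claim. I split into two cases.

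\emph{Case $d<\varepsilon$.} Take $i^\ast=2$. The $\max$ in \eqref{eq:iter_bound} is at least $0$, so the right-hand side is at least $2$ (or equals $\rho\ge2$ when the cap is active), and the bound holds.

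\emph{Case $d\ge\varepsilon$.} Set $N\triangleq\lfloor\ln(\varepsilon/d)/\ln\gamma\rfloor+1$. Since $\ln(\varepsilon/d)\le0$ and $\ln\gamma<0$, the quotient $q\triangleq\ln(\varepsilon/d)/\ln\gamma$ is nonnegative, so $N\ge1$ and $N>q$. Multiplying $N>q$ by $\ln\gamma<0$ flips the inequality to $N\ln\gamma<\ln(\varepsilon/d)$, that is, $\gamma^{N}d<\varepsilon$. Then $i^\ast=N+2$ satisfies $\gamma^{i^\ast-2}d<\varepsilon$, so $\rho_k\le\min\{\rho,N+2\}$. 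This is \eqref{eq:iter_bound}, because the $\max\{0,N\}$ equals $N$ in this case.

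\textbf{Main obstacle.} The only delicate point is the bookkeeping of signs and indices in Step 3. Dividing by $\ln\gamma<0$ reverses the inequality, and the offset of $2$ comes from two sources: the exponent is $i-2$, and the first admissible index in \eqref{eq:rhokk} is $2$. The $\max\{0,\cdot\}$ and the floor must be reconciled with the case $\varepsilon>d$, where the logarithm is positive. I will also note that the stated bound is slightly conservative, since the exact first index with $\gamma^{N}d<\varepsilon$ is $\lceil q\rceil$ or $\lfloor q\rfloor+1$; this does not affect validity.
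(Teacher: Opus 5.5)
Your proof is correct and follows the paper's own argument: you bound the successive increments by $\gamma^{i-2}\|U_{k|2}-U_{k|1}\|$ via the contraction (your explicit induction keeping the iterates in $\SB$ and your two-case split just make precise what the paper handles implicitly and through the $\max\{0,\cdot\}$), and then solve for the first index at which this bound drops below $\varepsilon$. One small correction to your closing aside: $\lfloor q\rfloor+1$ is always exactly the least integer strictly greater than $q$, including when $q$ is an integer, since $\gamma^{q}d=\varepsilon$ is not $<\varepsilon$. The bound is therefore tight with respect to the geometric envelope rather than slightly conservative, although this does not affect the validity of your proof.
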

	\begin{proof}
		It follows from \eqref{eq:Hmap_def} and \eqref{eq:contraction_ineq} that, for all $i\ge 2$,
		\begin{equation*}
			\|U_{k|i}-U_{k|i-1}\|\le \gamma^{i-2}\|U_{k|2}-U_{k|1}\|.
		\end{equation*}
		Hence, $\|U_{k|i}-U_{k|i-1}\|<\varepsilon$ holds for all $i\ge 2$ satisfying $\gamma^{i-2}\|U_{k|2}-U_{k|1}\|<\varepsilon$, that is, for all $i\ge 2$ such that $i > 2+\ln\big(\varepsilon/\|U_{k|2}-U_{k|1}\|\big)/\ln\gamma$, where the direction of the inequality reverses because $\ln\gamma<0$.
		Since the smallest integer strictly greater than a real number $x$ is $\lfloor x\rfloor+1$, combining this threshold with the inner minimum in \eqref{eq:rhokk} and the outer minimum with $\rho$ yields \eqref{eq:iter_bound}.
	\end{proof}
	
	\begin{remark}\rm
		\label{rem:contraction_practice}
		Proposition~\ref{prop:contraction} and Corollary~\ref{cor:contraction_zk} are usable for tuning even though $c_\SM$ is not available in closed form, because \eqref{eq:M_lip} shows that the contraction rate observed at step $k$ is $\gamma\approx c_\SM\|x_{k,1}\|$.
		A single cold-start run therefore identifies $c_\SM$: fitting the geometric decay of $\|U_{k|i}-U_{k|i-1}\|$ at $k=0$ yields $\hat\gamma_0$, and $\hat c_\SM\triangleq\hat\gamma_0/\|x_{0,1}\|$ estimates the constant.
		On the quadrotor of Example~\ref{ex:quadrotor}, $\hat\gamma_0=0.354$ with $\|x_{0,1}\|=1.18$, so that $\hat c_\SM\approx0.30$.
		With $\hat c_\SM$ in hand, \eqref{eq:zk_small} delimits the extended states from which a prescribed rate $\gamma$ is expected, and \eqref{eq:iter_bound} converts that rate into an iteration budget, which is how $\rho$ and $\varepsilon$ may be chosen for a given sampling period.
		The two constants trade off against each other: enlarging $\delta_U$ widens the ball $\SB$ on which the analysis is valid, but increases $\bar c_2$ and hence $c_3$ and $c_\SM$, so that the right-hand side of \eqref{eq:zk_small} need not increase.
	\end{remark}
	
	\begin{remark}\rm
		\label{rem:warmstart}
		Numerical simulations suggest that convergence is achieved for a wider range of initializations than the ball $\SB$ centered at the origin, and, away from the origin, the observed contraction rate is often close to, but below, one, as illustrated in Section~\ref{sec:examples}.
		Along converging closed-loop trajectories, the warm start \eqref{eq:warm_start} reduces $\|U_{k|2}-U_{k|1}\|$ relative to the cold start \eqref{eq:u0_init}, so that the bound \eqref{eq:iter_bound} on $\rho_k$ decreases after the initial steps; this behavior parallels the real-time iteration scheme \cite{diehl2002real} and warm-started iLQR \cite{TodorovLi2005}.
	\end{remark}
	
	The next result shows that the cost of the nonlinear rollout converges geometrically along the iterates.
	For all $U\isdef\matl u_1^\rmT&\cdots&u_{\ell-1}^\rmT\matr^\rmT\in\BBR^{m(\ell-1)}$, define the rollout cost $\bar J_k\colon\BBR^{m(\ell-1)}\to[0,\infty)$ by
	\begin{align}
		\bar J_k(U)&\triangleq \tfrac{1}{2}x_\ell(x_{k,1},U)^\rmT P\, x_\ell(x_{k,1},U)\nn\\
		&\quad+\tfrac{1}{2}\sum_{j=1}^{\ell-1}\(x_j(x_{k,1},U)^\rmT Q\, x_j(x_{k,1},U)+u_j^\rmT R\, u_j\),
		\label{eq:Jbar_def}
	\end{align}
	which is the cost \eqref{Jdefn} evaluated along the nonlinear rollout \eqref{eq:xkj+1i} under $U$.
	
	\begin{proposition}\label{prop:cost_decrease}\rm
		Consider the system \eqref{eq:xk1} controlled by iSCD-MPC \eqref{eq:fSCDC}--\eqref{eq:applied_control}. Assume that all the hypotheses of Proposition~\ref{prop:contraction} hold. Let $k\in\BBN$, and assume that $U_{k|1}\in\SB$.
		Then, there exists $L_J\in(0,\infty)$ such that, for all $i\in\BBZ^+$,
		\begin{equation}
			\big|\bar J_k(U_{k|i})-\bar J_k(U_{k,\star})\big|\le L_J\,\gamma^{i-1}\|U_{k|1}-U_{k,\star}\|.
			\label{eq:cost_mono}
		\end{equation}
		In addition, as $i\to\infty$, $\bar J_k(U_{k|i})\to \bar J_k(U_{k,\star})$.
	\end{proposition}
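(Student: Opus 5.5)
The plan is to show that $\bar J_k$ is Lipschitz on the ball $\SB$, and then combine this with the geometric convergence \eqref{eq:geo_convergence} of Proposition~\ref{prop:contraction}\ref{ct:iii}.

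First, all iterates stay in $\SB$. Since $U_{k|1}\in\SB$ and $\SM(x_{k,1},\cdot)$ maps $\SB$ into itself by Proposition~\ref{prop:contraction}\ref{ct:i}, induction on $i$ gives $U_{k|i}\in\SB$ for all $i\in\BBZ^+$. Proposition~\ref{prop:contraction}\ref{ct:ii} gives $U_{k,\star}\in\SB$.

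The main step is a Lipschitz estimate for $\bar J_k$ on $\SB$. Take $U,\hat U\in\SB$ with rollouts $x_j\triangleq x_j(x_{k,1},U)$ and $\hat x_j\triangleq x_j(x_{k,1},\hat U)$. For any symmetric weight $W\in\{Q,R,P\}$, use the identity
\begin{equation*}
a^\rmT W a-\hat a^\rmT W\hat a=(a-\hat a)^\rmT W(a+\hat a).
\end{equation*}
This gives $|a^\rmT Wa-\hat a^\rmT W\hat a|\le\|W\|\,\|a-\hat a\|\,(\|a\|+\|\hat a\|)$. Apply it term by term to \eqref{eq:Jbar_def}:
\begin{itemize}
\item For the state terms, the rollout bound \eqref{eq:rollout_bound} together with $\|x_{k,1}\|\le\delta_U$ gives $\|x_j\|,\|\hat x_j\|\le 2c_1\delta_U$.
\item The state differences are controlled by \eqref{eq:xj_lip}, namely $\|x_j-\hat x_j\|\le c_3\|U-\hat U\|$.
\item For the control terms, $\|u_j\|,\|\hat u_j\|\le\delta_U$ and $\|u_j-\hat u_j\|\le\|U-\hat U\|$.
\end{itemize}
Summing over $j\in\{1,\ldots,\ell\}$ yields $|\bar J_k(U)-\bar J_k(\hat U)|\le L_J\|U-\hat U\|$ with
\begin{equation*}
L_J\triangleq \tfrac12\big[(\ell-1)\|Q\|+\|P\|\big]\,4c_1\delta_U c_3+\tfrac12(\ell-1)\|R\|\,2\delta_U .
\end{equation*}
This constant depends only on $\bar a$, $\bar b$, $L_A$, $L_B$, $\ell$, $\delta_U$, and the weightings, and in particular not on $i$.

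The only point needing care is that \eqref{eq:rollout_bound} and \eqref{eq:xj_lip} were derived inside the proof of Proposition~\ref{prop:contraction}. I would re-invoke them explicitly for arbitrary $U,\hat U\in\SB$; they hold there because they use only \ref{A:lipschitz}, \ref{A:bounded}, and \eqref{eq:xk1_small}.

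Finally, set $U=U_{k|i}$ and $\hat U=U_{k,\star}$ in the Lipschitz estimate and apply \eqref{eq:geo_convergence}. This gives $|\bar J_k(U_{k|i})-\bar J_k(U_{k,\star})|\le L_J\gamma^{i-1}\|U_{k|1}-U_{k,\star}\|$, which is \eqref{eq:cost_mono}. Since $\gamma\in(0,1)$, the right-hand side tends to zero as $i\to\infty$, so $\bar J_k(U_{k|i})\to\bar J_k(U_{k,\star})$.
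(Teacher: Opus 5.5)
Your proposal is correct and follows the paper's proof: you prove a Lipschitz bound for $\bar J_k$ on $\SB$ using the identity $a^\rmT W a-\hat a^\rmT W\hat a=(a-\hat a)^\rmT W(a+\hat a)$ with \eqref{eq:rollout_bound} and \eqref{eq:xj_lip}, then set $U=U_{k|i}$, $\hat U=U_{k,\star}$ and apply \eqref{eq:geo_convergence}. The differences are minor: bounding the control terms stage by stage gives an extra factor $(\ell-1)$ on $\delta_U\|R\|$, which the paper avoids by bounding the stacked term $(U-\hat U)^\rmT\SR(U+\hat U)$, and you state explicitly that $U_{k|i}\in\SB$ for all $i$, which the paper uses without saying so.
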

	\begin{proof}
		Let $U,\hat U\in\SB$, and abbreviate $x_j(U)\triangleq x_j(x_{k,1},U)$ as in the proof of Proposition~\ref{prop:contraction}.
		It follows from \eqref{eq:rollout_bound} and \eqref{eq:xj_lip} that, for all $j\in\{1,\ldots,\ell\}$, $\|x_j(U)\|\le 2c_1\delta_U$ and $\|x_j(U)-x_j(\hat U)\|\le c_3\|U-\hat U\|$.
		Applying the identity $a^\rmT W a - \hat a^\rmT W\hat a = (a+\hat a)^\rmT W(a-\hat a)$, which holds for all symmetric $W$, to each term of \eqref{eq:Jbar_def} yields
		\begin{align}
			|\bar J_k(U)-\bar J_k(\hat U)|&\le  L_J\|U-\hat U\|,\label{eq:Jk_lip}
		\end{align}
		where 
		\begin{equation}
			L_J\triangleq 2c_1c_3\delta_U\big(\|P\|+(\ell-1)\|Q\|\big)+\delta_U\|R\|.
		\end{equation}
		Setting $\hat U = U_{k,\star}$ and $U = U_{k|i}$ in \eqref{eq:Jk_lip} and substituting \eqref{eq:geo_convergence} yields \eqref{eq:cost_mono}. Finally, part \ref{ct:iii} of Proposition~\ref{prop:contraction} confirms that as $i\to\infty$, $\bar J_k(U_{k|i})\to \bar J_k(U_{k,\star})$ as $i\to\infty$.
	\end{proof}
	
	Propositions~\ref{prop:contraction}--\ref{prop:cost_decrease} describe the iteration as a numerical process: it converges, at a known rate, to a known limit.
	They say nothing about what that limit is, and the fixed point is not in general a stationary point of ${\rm OCP}_k$.
	The next subsection quantifies the discrepancy.
	
	\subsection{First-Order Accuracy of the Fixed Point}
	\label{sec:kkt}
	
	The fixed point $U_{k,\star}$ of the SCDC iteration is, in general, not a stationary point of the underlying nonlinear program, because the frozen-coefficient QP ignores the dependence of the SCDC matrices on the decision variables.
	This subsection quantifies the resulting optimality gap.
	The first result bounds the difference between the true Jacobian of $f$ and the pseudo-Jacobian $\matl A & B\matr$ furnished by the SCDC factorization.
	
	\begin{lemma}\label{lem:jacobian_gap}\rm
		Consider the SCDC factorization \eqref{eq:fSCDC}, and suppose that \ref{A:lipschitz} holds.
		Then, $f$ is continuously differentiable, and, for all $x\in\BBR^n$ and $u\in\BBR^m$, the Jacobian $J_f(x,u)\triangleq\matl \nabla_x f(x,u)& \nabla_u f(x,u)\matr$ satisfies
		\begin{equation}
			\big\|J_f(x,u)-\matl A(x,u)& B(x,u)\matr\big\|\le \beta\(\|x\|+\|u\|\),
			\label{eq:Jac_gap}
		\end{equation}
		where $\beta\triangleq \sqrt{2}\max\{L_A,L_B\}$.
	\end{lemma}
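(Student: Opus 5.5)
Differentiability of $f$ follows from the product rule: under \ref{A:lipschitz}, $A$ and $B$ are continuously differentiable, and $f(x,u)=A(x,u)x+B(x,u)u$ is a sum of products of $C^1$ maps. For the bound, I will compute the directional derivative. Fix $(x,u)$ and a direction $(\delta x,\delta u)$. Then
\begin{equation*}
	J_f(x,u)\matl \delta x\\ \delta u\matr = A(x,u)\delta x + B(x,u)\delta u + \big(DA(x,u)[\delta x,\delta u]\big)x + \big(DB(x,u)[\delta x,\delta u]\big)u,
\end{equation*}
where $DA(x,u)[\delta x,\delta u]\triangleq \lim_{t\to0}\big(A(x+t\delta x,u+t\delta u)-A(x,u)\big)/t$ and $DB$ is defined analogously. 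Subtracting $\matl A(x,u)&B(x,u)\matr\matl \delta x\\ \delta u\matr$ leaves only the two remainder terms.

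Each remainder term can be bounded directly by \ref{A:lipschitz}. By \eqref{eq:LipA}, for every $t\neq0$,
\begin{equation*}
	\|A(x+t\delta x,u+t\delta u)-A(x,u)\|/|t|\le L_A(\|\delta x\|+\|\delta u\|),
\end{equation*}
and passing to the limit gives $\|DA(x,u)[\delta x,\delta u]\|\le L_A(\|\delta x\|+\|\delta u\|)$. The same argument with \eqref{eq:LipB} gives the analogous bound for $DB$ with constant $L_B$. Hence
\begin{equation*}
	\Big\|\big(J_f-\matl A&B\matr\big)\matl \delta x\\ \delta u\matr\Big\|\le \max\{L_A,L_B\}\,(\|\delta x\|+\|\delta u\|)(\|x\|+\|u\|).
\end{equation*}

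The main obstacle is only constant bookkeeping. Cauchy--Schwarz gives $\|\delta x\|+\|\delta u\|\le\sqrt2\,\big\|\matl \delta x^\rmT&\delta u^\rmT\matr^\rmT\big\|$. Taking the supremum over unit directions then yields \eqref{eq:Jac_gap} with $\beta=\sqrt2\max\{L_A,L_B\}$.
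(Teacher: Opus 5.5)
Your proof is correct and follows essentially the same route as the paper: apply the product rule to $f(x,u)=A(x,u)x+B(x,u)u$, then use \ref{A:lipschitz} to bound the directional derivatives of $A$ and $B$. The only difference is bookkeeping. You differentiate along a joint direction $(\delta x,\delta u)$ and obtain the $\sqrt2$ from Cauchy--Schwarz on $\|\delta x\|+\|\delta u\|$, whereas the paper bounds the blocks $\nabla_x f-A$ and $\nabla_u f-B$ separately by $L_A\|x\|+L_B\|u\|$ and obtains the $\sqrt2$ from the block-column norm inequality; both reach the same intermediate bound $\sqrt2\,(L_A\|x\|+L_B\|u\|)$.
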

	\begin{proof}
		Continuous differentiability of $f$ follows from \eqref{eq:fSCDC} and \ref{A:lipschitz}.
		Let $h\in\BBR^n$.
		It follows from \eqref{eq:fSCDC} and the product rule that
		\begin{align}
			\nabla_x f(x,u)\,h &= A(x,u)h + \big(D_x A(x,u)[h]\big)x\nn\\
			&\quad+\big(D_x B(x,u)[h]\big)u,\label{eq:dfx}
		\end{align}
		where $D_x A(x,u)[h]\in\BBR^{n\times n}$ and $D_x B(x,u)[h]\in\BBR^{n\times m}$ denote the directional derivatives of $A$ and $B$ at $(x,u)$ in the direction $(h,0)$.
		It follows from \eqref{eq:LipA} that $\|D_x A(x,u)[h]\|\le L_A\|h\|$, and, similarly, \eqref{eq:LipB} implies $\|D_x B(x,u)[h]\|\le L_B\|h\|$.
		Hence, \eqref{eq:dfx} implies
		\begin{equation}
			\|\nabla_x f(x,u)-A(x,u)\|\le L_A\|x\|+L_B\|u\|.\label{eq:dfx_bound}
		\end{equation}
		The same argument applied to directions of the form $(0,h_u)$, where $h_u\in\BBR^m$, yields
		\begin{equation}
			\|\nabla_u f(x,u)-B(x,u)\|\le L_A\|x\|+L_B\|u\|.\label{eq:dfu_bound}
		\end{equation}
		Combining \eqref{eq:dfx_bound} and \eqref{eq:dfu_bound} with the block-matrix norm inequality $\|\matl M_1& M_2\matr\|\le \sqrt{\|M_1\|^2+\|M_2\|^2}$ and the bound $L_A\|x\|+L_B\|u\|\le \max\{L_A,L_B\}(\|x\|+\|u\|)$ yields \eqref{eq:Jac_gap}.
	\end{proof}
	
	Consider the nonlinear program underlying ${\rm OCP}_k$ defined in Section~\ref{sec:ocp}, namely, minimization of \eqref{Jdefn} subject to the nonlinear prediction dynamics \eqref{eq:xfka}, and define the Lagrangian $\SL\colon\BBR^{n\ell}\times\BBR^{m(\ell-1)}\times\BBR^{n(\ell-1)}\to\BBR$ by
	\begin{align}
		\SL(X,U,\Lambda)&\triangleq \tfrac{1}{2}x_\ell^\rmT P x_\ell+\tfrac{1}{2}\sum_{j=1}^{\ell-1}\(x_j^\rmT Q x_j+u_j^\rmT R u_j\)\nn\\
		&\quad +\sum_{j=1}^{\ell-1}\lambda_{j+1}^\rmT\(f(x_j,u_j)-x_{j+1}\),
		\label{eq:Lagrangian}
	\end{align}
	where $X\triangleq\matl x_1^\rmT&\cdots&x_\ell^\rmT\matr^\rmT$, $U\triangleq\matl u_1^\rmT&\cdots&u_{\ell-1}^\rmT\matr^\rmT$, $\Lambda\triangleq\matl \lambda_2^\rmT&\cdots&\lambda_\ell^\rmT\matr^\rmT$, and $x_1 = x_{k,1}$ is fixed.
	
	\begin{proposition}\label{prop:kkt_gap}\rm
		Consider the system \eqref{eq:xk1} controlled by iSCD-MPC \eqref{eq:fSCDC}--\eqref{eq:applied_control}. Assume that the hypotheses of Proposition~\ref{prop:contraction} hold. Let $k\in\BBN$, and let $U_{k,\star}=\matl u_{1,\star}^\rmT&\cdots&u_{\ell-1,\star}^\rmT\matr^\rmT$ be the fixed point given by Proposition~\ref{prop:contraction}.
		Furthermore, let $x_{1,\star},\ldots,x_{\ell,\star}$ denote the rollout \eqref{eq:xkj+1i} under $U_{k,\star}$, define $A_{j,\star}\triangleq A(x_{j,\star},u_{j,\star})$ and $B_{j,\star}\triangleq B(x_{j,\star},u_{j,\star})$, and define the costates $\lambda_{\ell,\star}\triangleq P x_{\ell,\star}$ and, for all $j\in\{\ell-1,\ldots,2\}$,
		\begin{equation}
			\lambda_{j,\star}\triangleq Q x_{j,\star}+A_{j,\star}^\rmT\lambda_{j+1,\star}.
			\label{eq:costate}
		\end{equation}
		Then,
		\begin{align}
			&\big\|\nabla_{(X,U)}\SL(X_{k,\star},U_{k,\star},\Lambda_{k,\star})\big\|\nn\\
			&\qquad\le \beta\sum_{j=1}^{\ell-1}\(\|x_{j,\star}\|+\|u_{j,\star}\|\)\|\lambda_{j+1,\star}\|,
			\label{eq:KKTgap}
		\end{align}
		where $X_{k,\star}\triangleq\matl x_{2,\star}^\rmT&\cdots&x_{\ell,\star}^\rmT\matr^\rmT$ and $\Lambda_{k,\star}\triangleq\matl \lambda_{2,\star}^\rmT&\cdots&\lambda_{\ell,\star}^\rmT\matr^\rmT$ stack the free predicted states and costates, respectively, and $\beta$ is given by Lemma~\ref{lem:jacobian_gap}.
	\end{proposition}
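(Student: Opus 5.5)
We will compute the gradient of the Lagrangian \eqref{eq:Lagrangian} block by block at $(X_{k,\star},U_{k,\star},\Lambda_{k,\star})$. We then use the fixed-point property to show that, with the costates \eqref{eq:costate}, every block reduces to a term of the form $(J_f-\matl A & B\matr)^\rmT\lambda$, which Lemma~\ref{lem:jacobian_gap} bounds.

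\emph{Step 1 (state blocks).} For $j\in\{2,\ldots,\ell-1\}$ the partial gradient of $\SL$ with respect to $x_j$ is
\[
\nabla_{x_j}\SL = Qx_{j,\star}+\nabla_x f(x_{j,\star},u_{j,\star})^\rmT\lambda_{j+1,\star}-\lambda_{j,\star}.
\]
Substituting \eqref{eq:costate}, this becomes $(\nabla_x f(x_{j,\star},u_{j,\star})-A_{j,\star})^\rmT\lambda_{j+1,\star}$. For $j=\ell$ we get $\nabla_{x_\ell}\SL=Px_{\ell,\star}-\lambda_{\ell,\star}=0$ by definition of $\lambda_{\ell,\star}$. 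Feasibility of the dynamics (the $\Lambda$-gradient) holds trivially, since $X_{k,\star}$ is the nonlinear rollout; it is not part of the $(X,U)$-gradient anyway.

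\emph{Step 2 (control blocks via the fixed point).} The key observation is that $U_{k,\star}=\SM(x_{k,1},U_{k,\star})$. By \eqref{eq:M_explicit}, this says that $U_{k,\star}$ minimizes the unconstrained frozen QP \eqref{eq:QP_cost}--\eqref{eq:QP_dyn} with matrices $A_{j,\star},B_{j,\star}$. By \eqref{eq:fSCDC}, the QP's predicted states under $U_{k,\star}$ coincide with the nonlinear rollout $x_{j,\star}$. The KKT conditions of this linear-quadratic problem, that is, the discrete minimum principle for the linear time-varying system, give costates satisfying exactly the recursion \eqref{eq:costate} with terminal value $Px_{\ell,\star}$, together with
\[
Ru_{j,\star}+B_{j,\star}^\rmT\lambda_{j+1,\star}=0,\qquad j\in\{1,\ldots,\ell-1\}.
\]
We will verify this by writing $\nabla_U$ of the reduced QP cost as $\SR U+\Gamma^\rmT\SQ X$ and unrolling $\Gamma^\rmT$ backward, which yields the adjoint recursion. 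It follows that $\nabla_{u_j}\SL=Ru_{j,\star}+\nabla_u f^\rmT\lambda_{j+1,\star}=(\nabla_u f(x_{j,\star},u_{j,\star})-B_{j,\star})^\rmT\lambda_{j+1,\star}$.

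\emph{Step 3 (assembling the bound).} Group the $x_{j+1}$-block, for $j\le\ell-2$, together with the $u_j$-block into the single vector $(J_f(x_{j,\star},u_{j,\star})-\matl A_{j,\star}&B_{j,\star}\matr)^\rmT\lambda_{j+1,\star}$. One index offset needs care here: the $x_j$-gradient involves the pair $(x_{j,\star},u_{j,\star})$, so we pair $x_j$ with $u_j$, not $x_{j+1}$. Lemma~\ref{lem:jacobian_gap} bounds the norm of each such pair by $\beta(\|x_{j,\star}\|+\|u_{j,\star}\|)\|\lambda_{j+1,\star}\|$. For $j=1$, $x_1$ is fixed, so only the $u_1$-part appears, and it is bounded by the same quantity. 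We then bound the Euclidean norm of the stacked gradient by the sum of the block norms, which gives \eqref{eq:KKTgap}.

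The main obstacle is Step 2: making rigorous that the fixed-point characterization \eqref{eq:M_explicit} yields precisely the costate recursion \eqref{eq:costate}, including the correct indexing and the terminal condition. This relies on constraints being inactive, which is guaranteed here under the hypotheses of Proposition~\ref{prop:contraction}.
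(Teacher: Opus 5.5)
Your proposal is correct and follows essentially the paper's argument. The paper writes $\nabla_{(X,U)}\SL=\nabla_{(X,U)}\SL_{\rm f}+\nabla_{(X,U)}(\SL-\SL_{\rm f})$, where $\SL_{\rm f}$ is the Lagrangian frozen at $(A_{j,\star},B_{j,\star})$, and uses optimality of $U_{k,\star}$ for the frozen QP to get $\nabla_{(X,U)}\SL_{\rm f}=0$; that is exactly your costate substitution together with $Ru_{j,\star}+B_{j,\star}^\rmT\lambda_{j+1,\star}=0$. It then bounds the remaining blocks $(J_f-\matl A_{j,\star}&B_{j,\star}\matr)^\rmT\lambda_{j+1,\star}$ by Lemma~\ref{lem:jacobian_gap} and the triangle inequality, as you do.

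Two small cleanups. First, delete the mis-indexed opening sentence of Step~3, since the correct pairing is $x_j$ with $u_j$, as you then say. Second, Step~2 does not rely on inactive constraints. The map $\SM$ is defined by the unconstrained formula \eqref{eq:M_explicit}, so its fixed point satisfies the unconstrained stationarity conditions regardless. Your claim that the hypotheses of Proposition~\ref{prop:contraction} guarantee inactivity is not accurate, since they do not by themselves ensure it.
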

	\begin{proof}
		At the fixed point, the QP \eqref{eq:QP_cost}--\eqref{eq:QP_dyn} with coefficients frozen at $\{(A_{j,\star},B_{j,\star})\}_{j=1}^{\ell-1}$ is solved by $U_{k,\star}$, and, since \eqref{eq:fSCDC} implies $A_{j,\star}x_{j,\star}+B_{j,\star}u_{j,\star}=f(x_{j,\star},u_{j,\star})$, the QP state trajectory coincides with the rollout $x_{1,\star},\ldots,x_{\ell,\star}$.
		Define the frozen Lagrangian $\SL_{\rm f}$ as \eqref{eq:Lagrangian} with $f(x_j,u_j)$ replaced by $A_{j,\star}x_j+B_{j,\star}u_j$.
		Since the initial state $x_1=x_{k,1}$ is fixed, the first-order optimality conditions for the free decision variables of the frozen QP are
		\begin{gather*}
			\nabla_{x_j}\SL_{\rm f} = Q x_{j,\star}+A_{j,\star}^\rmT\lambda_{j+1,\star}-\lambda_{j,\star}=0,\ j\in\{2,\ldots,\ell{-}1\},\\
			\nabla_{x_\ell}\SL_{\rm f} = P x_{\ell,\star}-\lambda_{\ell,\star}=0,\\
			\nabla_{u_j}\SL_{\rm f} = R u_{j,\star}+B_{j,\star}^\rmT\lambda_{j+1,\star}=0,\ j\in\{1,\ldots,\ell{-}1\},
		\end{gather*}
		which hold with the costates \eqref{eq:costate} by the optimality of $U_{k,\star}$ for the frozen QP; hence, $\nabla_{(X,U)}\SL_{\rm f}(X_{k,\star},U_{k,\star},\Lambda_{k,\star})=0$.
		Next, it follows from \eqref{eq:Lagrangian} that $\SL$ and $\SL_{\rm f}$ differ only in the dynamic-constraint terms.
		For $j\in\{2,\ldots,\ell-1\}$, where both $x_j$ and $u_j$ are free, the gradient difference is
		\begin{align*}
			&\nabla_{(x_j,u_j)}\big(\SL-\SL_{\rm f}\big)(X_{k,\star},U_{k,\star},\Lambda_{k,\star})\nn\\
			&\qquad= \Big(J_f(x_{j,\star},u_{j,\star})-\matl A_{j,\star}& B_{j,\star}\matr\Big)^\rmT \lambda_{j+1,\star},
		\end{align*}
		whose norm is bounded by $\beta(\|x_{j,\star}\|+\|u_{j,\star}\|)\|\lambda_{j+1,\star}\|$ by Lemma~\ref{lem:jacobian_gap}.
		At $j=1$, since $x_1$ is fixed, only $u_1$ is differentiated, yielding $\nabla_{u_1}(\SL-\SL_{\rm f})=(\nabla_u f(x_{1,\star},u_{1,\star})-B_{1,\star})^\rmT \lambda_{2,\star}$, whose norm is likewise bounded by $\beta(\|x_{1,\star}\|+\|u_{1,\star}\|)\|\lambda_{2,\star}\|$ by Lemma~\ref{lem:jacobian_gap}.
		Finally, at $j=\ell$, $\nabla_{x_\ell}(\SL-\SL_{\rm f})=0$ because the dynamic constraints do not evaluate $f$ at step $\ell$.
		Summing these bounds over $j\in\{1,\ldots,\ell-1\}$ and applying the triangle inequality yields \eqref{eq:KKTgap}.
	\end{proof}
	
	\begin{remark}\rm
		\label{rem:kkt_scaling}
		Since \eqref{eq:rollout_bound} and \eqref{eq:costate} imply that, on $\SB$, $\|x_{j,\star}\|$, $\|u_{j,\star}\|$, and $\|\lambda_{j,\star}\|$ are each bounded by a constant times $\|x_{k,1}\|$, the bound \eqref{eq:KKTgap} is $O(\|x_{k,1}\|^2)$; hence, the first-order optimality residual of the SCDC fixed point vanishes quadratically at the origin, where local stabilization demands the highest accuracy.
		Away from the origin, \eqref{eq:KKTgap} provides a computable certificate of first-order accuracy in terms of the Lipschitz constants of the chosen factorization $(A,B)$; among the nonunique factorizations \eqref{eq:fSCDC}, those with smaller $L_A$ and $L_B$ yield smaller optimality gaps.
	\end{remark}
	
	\begin{remark}\rm
		\label{rem:factorization_invariance}
		Since the factorization \eqref{eq:fSCDC} is not unique when $n>1$, it is useful to record which objects in the analysis depend on that choice and which do not.
		Let $(A,B)$ and $(\hat A,\hat B)$ be continuously differentiable factorizations of the same $f$.
		Evaluating \eqref{eq:dfx} at $(x,u)=(0,0)$ gives $\nabla_x f(0,0)=A(0,0)$ and $\nabla_u f(0,0)=B(0,0)$, and likewise for $(\hat A,\hat B)$, so that $A(0,0)=\hat A(0,0)$ and $B(0,0)=\hat B(0,0)$.
		The pair $(A_0,B_0)$ is therefore the Jacobian linearization of \eqref{eq:xk1} at the origin and is common to all such factorizations, and hence so are the solution $P$ of \eqref{eq:DARE}, the gain $K_\rmf$ given by \eqref{eq:Kf_def}, the matrix $A_{\rm cl}$ given by \eqref{eq:Acl_def}, and the function $V_\rmf$.
		The quantities that do depend on the factorization are those involving the Lipschitz constants $L_A$ and $L_B$ of \ref{A:lipschitz}, namely the constant $c_\SM$ of Proposition~\ref{prop:contraction}, the bound $\beta$ of Lemma~\ref{lem:jacobian_gap} and hence the residual \eqref{eq:KKTgap}, and the level $\alpha$ in \eqref{eq:alpha_def}, which decreases as $L_A$ and $L_B$ increase through the constant $c_\phi$ in \eqref{eq:phi_def}.
		The terminal control law and the terminal penalty are thus fixed by the plant, whereas the size of the terminal set and the sharpness of the contraction and optimality-gap estimates are affected by the choice of $(A,B)$.
	\end{remark}
	
	\subsection{Boundedness of the Online Terminal Penalty}
	\label{sec:Pk_bound}
	
	The next result shows that the adapted terminal penalty \eqref{eq:Pk_update} is uniformly bounded above and below along arbitrary trajectories.
	The upper bound uses the following uniform stabilizability assumption on the sequence \eqref{eq:barAB}.
	
	\begin{enumerate}[label=({A\arabic*}),itemindent=5pt]
		\setcounter{enumi}{3}
		\item \label{A:unif_stab}
		There exist $\bar\kappa\in(0,\infty)$, $c_\rme\in[1,\infty)$, $\lambda_\rme\in(0,1)$, and a sequence $\{K_k\}_{k=0}^\infty\subset\BBR^{m\times n}$ such that, for all $k\in\BBN$, $\|K_k\|\le\bar\kappa$ and, for all $k> j\ge0$,
		\begin{equation}
			\big\|(\bar A_{k-1}+\bar B_{k-1}K_{k-1})\cdots(\bar A_{j}+\bar B_{j}K_{j})\big\|\le c_\rme\,\lambda_\rme^{\,k-j}.
			\label{eq:unif_stab}
		\end{equation}
	\end{enumerate}
	
	Assumption \ref{A:unif_stab} states that the time-varying pair $(\bar A_k,\bar B_k)$ evaluated along the closed-loop trajectory is uniformly stabilizable \cite{anderson1981detectability}; it holds, in particular, whenever the trajectory remains in a compact set on which a continuous stabilizing gain schedule exists.
	
	\begin{proposition}\label{prop:Pk_bounded}\rm
		Consider the system \eqref{eq:xk1} controlled by iSCD-MPC \eqref{eq:fSCDC}--\eqref{eq:applied_control} with terminal-penalty adaptation \eqref{eq:Pk_update}, where $P_0\succ0$, $Q\succ 0$, and $R\succ0$, and suppose that \ref{A:bounded} and \ref{A:unif_stab} hold.
		Then, for all $k\in\BBZ^+$,
		\begin{equation}
			\underline p\, I_n\preceq P_k\preceq \bar p\, I_n,
			\label{eq:Pk_bounds}
		\end{equation}
		where $\underline p\triangleq \lambda_{\min}(Q)$ and $\bar p\triangleq c_\rme^2\lambda_\rme^{2}\|P_0\|+c_\rme^2\dfrac{\|Q\|+\bar\kappa^2\|R\|}{1-\lambda_\rme^2}$.
	\end{proposition}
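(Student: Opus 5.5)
The plan is to read the recursion \eqref{eq:Pk_update} as one step of a dynamic-programming (Riccati) map and to bound $P_k$ from both sides through variational characterizations. Define, for $P\succeq0$ and the pair $(\bar A_k,\bar B_k)$, the Riccati operator
\begin{equation*}
\mathcal R_k(P)\triangleq \bar A_k^\rmT P\bar A_k-\bar A_k^\rmT P\bar B_k(\bar B_k^\rmT P\bar B_k+R)^{-1}\bar B_k^\rmT P\bar A_k+Q,
\end{equation*}
so that $P_{k+1}=\mathcal R_k(P_k)$. By completion of squares, for every $x\in\BBR^n$,
\begin{equation*}
x^\rmT\mathcal R_k(P)x=\min_{u\in\BBR^m}\Big[(\bar A_kx+\bar B_ku)^\rmT P(\bar A_kx+\bar B_ku)+u^\rmT Ru+x^\rmT Qx\Big].
\end{equation*}
Both bounds in \eqref{eq:Pk_bounds} will be read off from this identity.

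For the lower bound, every term inside the minimum except $x^\rmT Qx$ is nonnegative, because $P_k\succeq0$ and $R\succ0$. Hence $x^\rmT P_{k+1}x\ge x^\rmT Qx\ge\lambda_{\min}(Q)\|x\|^2$ for all $k\in\BBN$. This gives $P_k\succeq\underline p\,I_n$ for every $k\ge1$, and it explains why the statement is restricted to $k\in\BBZ^+$, since $P_0$ is arbitrary. Positive semidefiniteness propagates by induction from $P_0\succ0$.

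For the upper bound, I would unroll the minimization over $k$ steps, which is the standard finite-horizon cost interpretation of the forward Riccati recursion. Iterating the variational identity shows that $x^\rmT P_kx$ is the minimum, over input sequences, of a cost: propagate backward in the index, $\xi_k=x$, $\xi_{s}=\bar A_{s}\xi_{s+1}+\bar B_{s}w_{s}$ in the order dictated by the composition $\mathcal R_{k-1}\circ\cdots\circ\mathcal R_0$, and collect the stage terms $\xi^\rmT Q\xi+w^\rmT Rw$ plus a terminal term $\xi^\rmT P_0\xi$. Any admissible input sequence gives an upper bound on this minimum. The natural choice is the feedback $w=K_s\xi$ from \ref{A:unif_stab}. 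With that choice, each state in the chain is a product of closed-loop factors $(\bar A_s+\bar B_sK_s)$ applied to $x$, with norm at most $c_\rme\lambda_\rme^{\,\text{length}}\|x\|$ by \eqref{eq:unif_stab}. Each stage contributes at most $(\|Q\|+\bar\kappa^2\|R\|)c_\rme^2\lambda_\rme^{2t}\|x\|^2$. The terminal term contributes at most $\|P_0\|c_\rme^2\lambda_\rme^{2k}\|x\|^2\le c_\rme^2\lambda_\rme^2\|P_0\|\|x\|^2$ for $k\ge1$. Summing the geometric series yields $\bar p$.

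The main obstacle is bookkeeping the composition order. $\mathcal R_{k-1}(\mathcal R_{k-2}(\cdots))$ applies $\bar A_{k-1}$ first (outermost), then $\bar A_{k-2}$, and so on. The state chain generated by the minimization is therefore $x,\ (\bar A_{k-1}+\bar B_{k-1}K_{k-1})x,\ (\bar A_{k-2}+\bar B_{k-2}K_{k-2})(\bar A_{k-1}+\bar B_{k-1}K_{k-1})x,\dots$. This is the product in the reverse order from \eqref{eq:unif_stab}. I would handle this either by interpreting the assumption as covering such products, which is the case for the uniformly-stabilizable setting cited, or by a direct induction. In the induction I would prove $x^\rmT P_{k}x\le x^\rmT S_kx$, where $S_{k+1}=(\bar A_k+\bar B_kK_k)^\rmT S_k(\bar A_k+\bar B_kK_k)+Q+K_k^\rmT RK_k$ and $S_0=P_0$, using monotonicity of $\mathcal R_k$ and suboptimality of $u=K_kx$. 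I would then expand $S_k$ as a sum of congruences and bound each summand by \eqref{eq:unif_stab}, matching the indices so that each product is a consecutive closed-loop transition. The stage-term exponents start at $t=0$, so there is one factor $c_\rme^2$ without $\lambda_\rme$ reduction. I would bound the sum crudely by $c_\rme^2/(1-\lambda_\rme^2)$, which produces the stated constant.
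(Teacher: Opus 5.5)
Your skeleton is the paper's. For the lower bound you use $P_{k+1}\succeq Q$; your variational identity does the same job as the paper's matrix-inversion-lemma form. For the upper bound you use the suboptimal gain $K_k$, which gives $P_{k+1}\preceq Q+K_k^\rmT RK_k+\bar E_k^\rmT P_k\bar E_k$ with $\bar E_k\triangleq\bar A_k+\bar B_kK_k$, and then unroll. All of that is sound. The gap is the last step, and it is exactly the obstacle you flagged; neither of your remedies removes it. Unrolling gives
\[
P_k\preceq \Theta_{0}^\rmT P_0\,\Theta_{0}+\sum_{j=0}^{k-1}\Theta_{j+1}^\rmT\big(Q+K_j^\rmT RK_j\big)\Theta_{j+1},\qquad \Theta_{j}\triangleq\bar E_{j}\bar E_{j+1}\cdots\bar E_{k-1},\ \ \Theta_k\triangleq I_n .
\]
For example, at $k=2$ the $P_0$ term is $(\bar E_0\bar E_1)^\rmT P_0\,\bar E_0\bar E_1$.

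Your comparison sequence $S_k$ expands into precisely this sum, so ``matching the indices'' cannot help. The bound \eqref{eq:unif_stab} controls only $\bar E_{k-1}\cdots\bar E_j$, and for noncommuting factors $\|\bar E_j\cdots\bar E_{k-1}\|$ is unrelated to it. Reading \ref{A:unif_stab} as covering the reversed products is not an interpretation but a different hypothesis: \ref{A:unif_stab}, like the uniform stabilizability it cites, constrains the forward transition matrix.

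This is not a bookkeeping artifact; a counterexample shows it. Take $n=3$, $\bar B_k\equiv 0$ and $K_k\equiv0$. Then \eqref{eq:Pk_update} reads $P_{k+1}=\bar A_k^\rmT P_k\bar A_k+Q$, and the expansion above holds with equality. Let $\bar A_k\triangleq e_{s+1}v_s^\rmT$, where $s$ is the remainder of $k$ modulo $3$, with $v_0=(0,2,\tfrac12)^\rmT$, $v_1=(\tfrac12,0,2)^\rmT$, and $v_2=(2,\tfrac12,0)^\rmT$.

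\begin{itemize}
\item Every consecutive pair in forward order contributes a scalar factor $\tfrac12$, so $\|\bar A_{k-1}\cdots\bar A_j\|=\sqrt{17}\,2^{-(k-j)}$. Hence \ref{A:bounded} and \ref{A:unif_stab} hold with $c_\rme=\sqrt{17}$ and $\lambda_\rme=\tfrac12$.
\item Every consecutive pair in reversed order contributes a factor $2$, so $\|\bar A_0\cdots\bar A_{k-1}\|=\tfrac{\sqrt{17}}{4}\,2^{k}$. Hence $\|P_k\|\ge\tfrac{17}{16}\lambda_{\min}(P_0)\,4^{k}\to\infty$.
\end{itemize}

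This sequence is realized along the trajectory from $x_0=e_3$ of $x_{k+1}=A(x_k)x_k$, with $A$ constant on each coordinate axis away from the origin. \ref{A:bounded} allows this because \ref{A:lipschitz} is not assumed. Any argument using only \ref{A:bounded} and \ref{A:unif_stab} sees nothing but the sequence $(\bar A_k,\bar B_k)$, so the upper bound in \eqref{eq:Pk_bounds} cannot be derived from \ref{A:unif_stab} as written.

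The paper's proof has the same hole, unacknowledged. It writes \eqref{eq:Pk_ub_sum} with $\Psi_{k,j}\triangleq\bar E_{k-1}\cdots\bar E_j$. That is the order \eqref{eq:unif_stab} controls, but not the order produced by iterating \eqref{eq:Pk_ub_recursion}. The repair, for both arguments, is to impose \eqref{eq:unif_stab} on $(\bar A_j+\bar B_jK_j)\cdots(\bar A_{k-1}+\bar B_{k-1}K_{k-1})$. Equivalently, assume uniform detectability of the dual pair $(\bar A_k^\rmT,\bar B_k^\rmT)$, which is the natural hypothesis for a forward, filter-type Riccati recursion.

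Under that hypothesis your geometric-series bookkeeping is correct and reproduces $\bar p$ exactly. The terminal term is at most $c_\rme^2\lambda_\rme^{2k}\|P_0\|\le c_\rme^2\lambda_\rme^{2}\|P_0\|$ for $k\ge1$. The stage terms are at most $c_\rme^2\lambda_\rme^{2(k-j-1)}(\|Q\|+\bar\kappa^2\|R\|)$, with $c_\rme\ge1$ covering the identity factor at $j=k-1$.
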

	\begin{proof}
		To prove the lower bound, we use induction.
		The base case holds because $P_0\succ0$.
		Assume that $P_k\succ0$.
		Applying the matrix inversion lemma \cite[Cor.~3.9.8]{bernstein2018scalar} to \eqref{eq:Pk_update} yields
		\begin{equation*}
			P_{k+1} = Q + \bar A_k^\rmT \(P_k^{-1}+\bar B_k R^{-1}\bar B_k^\rmT\)^{-1}\bar A_k.
		\end{equation*}
		Since, in addition, $Q\succ0$ and $\(P_k^{-1}+\bar B_k R^{-1}\bar B_k^\rmT\)^{-1}\succ 0$, it follows that
		\begin{equation*}
			P_{k+1}\succeq Q\succeq \lambda_{\min}(Q)I_n\succ0,
		\end{equation*}
		which completes the induction and confirms the lower bound in \eqref{eq:Pk_bounds}.
		
		To prove the upper bound, note that completing the square implies that, for all $K\in\BBR^{m\times n}$ and $S\succeq0$,
		\begin{align}
			&Q+K^\rmT R K+(\bar A_k+\bar B_kK)^\rmT S(\bar A_k+\bar B_kK)\nn\\
			&\quad= Q+\bar A_k^\rmT S\bar A_k -\bar A_k^\rmT S\bar B_k(R+\bar B_k^\rmT S\bar B_k)^{-1}\bar B_k^\rmT S\bar A_k\nn\\
			&\qquad +(K-K_{S})^\rmT (R+\bar B_k^\rmT S\bar B_k)(K-K_{S}),
			\label{eq:completion}
		\end{align}
		where $K_{S}\triangleq -(R+\bar B_k^\rmT S\bar B_k)^{-1}\bar B_k^\rmT S\bar A_k$.
		Setting $S=P_k$ and $K=K_k$ in \eqref{eq:completion} and using \eqref{eq:Pk_update} and $(K_k-K_{P_k})^\rmT (R+\bar B_k^\rmT P_k\bar B_k)(K_k-K_{P_k})\succeq0$ yields
		\begin{equation}
			P_{k+1}\preceq Q+K_k^\rmT R K_k+\bar E_k^\rmT P_k\bar E_k,
			\label{eq:Pk_ub_recursion}
		\end{equation}
		where $\bar E_k\triangleq \bar A_k+\bar B_kK_k$.
		Iterating \eqref{eq:Pk_ub_recursion} from $0$ to $k$ yields
		\begin{equation}
			P_{k}\preceq \Psi_{k,0}^\rmT P_0 \Psi_{k,0}+\sum_{j=0}^{k-1}\Psi_{k,j+1}^\rmT\(Q+K_j^\rmT R K_j\)\Psi_{k,j+1},\label{eq:Pk_ub_sum}
		\end{equation}
		where, for all $k>j$, $\Psi_{k,j}\triangleq \bar E_{k-1}\cdots\bar E_j$ and $\Psi_{j,j}\triangleq I_n$.
		Substituting \eqref{eq:unif_stab} and the bound $\|K_j\|\le\bar\kappa$ into \eqref{eq:Pk_ub_sum} yields, for all $k\in\BBZ^+$,
		\begin{equation*}
			\|P_k\|\le c_\rme^2\lambda_\rme^{2k}\|P_0\|+c_\rme^2\(\|Q\|+\bar\kappa^2\|R\|\)\sum_{j=0}^{k-1}\lambda_\rme^{2(k-j-1)}\le \bar p,
		\end{equation*}
		which confirms the upper bound in \eqref{eq:Pk_bounds}.
	\end{proof}
	
	\subsection{Recursive Feasibility}
	\label{sec:feasibility}
	
	\begin{theorem}\label{thm:feasibility}\rm
		Consider the system \eqref{eq:xk1} controlled by receding-horizon control \eqref{eq:uk1_applied}, where, at each step $k$, the applied sequence is a minimizer of ${\rm OCP}_k$ given by \eqref{eq:OCP}. Assume that the hypotheses of Proposition~\ref{prop:terminal_set} hold, and let $z_0\in\SF_\ell$. Then, for all $k\in\BBN$, $z_k\in\SF_\ell$.
	\end{theorem}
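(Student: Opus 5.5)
The proof goes by induction on $k$; the base case $z_0\in\SF_\ell$ is the hypothesis. For the inductive step, assume $z_k=(x_k,u_k)\in\SF_\ell$. Then ${\rm OCP}_k$ is feasible. Its feasible set is closed, because $\SX$, $\SU$, and $\SX_\rmf$ are closed and $f$ is continuous. Its cost is coercive in $U$, because $R\succ0$. Hence a minimizer $U_k=\matl u_{k,1}^\rmT&\cdots&u_{k,\ell-1}^\rmT\matr^\rmT$ exists; as in the statement, the applied sequence is such a minimizer. Let $x_{k,1},\ldots,x_{k,\ell}$ denote its rollout under \eqref{eq:xfka}. Feasibility gives $x_{k,j}\in\SX$ for $j\in\{1,\ldots,\ell-1\}$, $u_{k,j}\in\SU$, and $x_{k,\ell}\in\SX_\rmf$.

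Next, identify the new extended state. By Remark~\ref{rem:delay}, \eqref{eq:xk1_pred} and \eqref{eq:uk1_applied} give $x_{k+1}=x_{k,1}$ and $u_{k+1}=u_{k,1}$. The anchor of ${\rm OCP}_{k+1}$ is therefore
\[
x_{k+1,1}=f(x_{k+1},u_{k+1})=f(x_{k,1},u_{k,1})=x_{k,2}.
\]
To show $z_{k+1}\in\SF_\ell$, I would use the shifted candidate $\SSS_k(U_k)$ from \eqref{eq:shift_op}, namely $\matl u_{k,2}^\rmT&\cdots&u_{k,\ell-1}^\rmT&\kappa_\rmf(x_{k,\ell})^\rmT\matr^\rmT$. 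Under it, the rollout of ${\rm OCP}_{k+1}$ starting from $x_{k,2}$ reproduces $x_{k,2},\ldots,x_{k,\ell}$ at positions $1,\ldots,\ell-1$. Its terminal state is $f(x_{k,\ell},\kappa_\rmf(x_{k,\ell}))$.

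Finally, check each constraint in \eqref{eq:Fl_def}. The controls $u_{k,2},\ldots,u_{k,\ell-1}$ lie in $\SU$ by feasibility of $U_k$, and $\kappa_\rmf(x_{k,\ell})\in\SU$ by Proposition~\ref{prop:terminal_set}\ref{ts:i}, since $x_{k,\ell}\in\SX_\rmf$. The states $x_{k,2},\ldots,x_{k,\ell-1}$ lie in $\SX$ by feasibility. The state $x_{k,\ell}$ now occupies position $\ell-1$, and it lies in $\SX$ because $\SX_\rmf\subseteq\SX$, again by Proposition~\ref{prop:terminal_set}\ref{ts:i}. The new terminal state lies in $\SX_\rmf$ by the invariance in Proposition~\ref{prop:terminal_set}\ref{ts:ii}. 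Hence $z_{k+1}\in\SF_\ell$, which completes the induction.

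I expect the main obstacle to be index bookkeeping rather than analysis. The one-step delay shifts the anchor from $x_{k,1}$ to $x_{k,2}$, and one must verify that the shifted candidate fits the definition \eqref{eq:Fl_def}: its fixed first state is $f(x,u)$ with $(x,u)=z_{k+1}$, and the constraint $x_j\in\SX$ covers only $j\le\ell-1$. A second point to keep explicit is that the argument never uses optimality of $U_k$, only its feasibility. The same proof therefore also covers any feasible applied sequence, which is what the early-terminated case will later need.
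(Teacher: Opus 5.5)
Your proof is correct and follows the paper's argument step for step: induction on $k$, the anchor identification $x_{k+1,1}=f(x_{k,1},u_{k,1})=x_{k,2}$, and feasibility of the shifted candidate $\SSS_k(U_k)$ via statements \ref{ts:i} and \ref{ts:ii} of Proposition~\ref{prop:terminal_set}. Your two additions are sound. The existence argument holds because the feasible set is closed and the cost is bounded below by $\tfrac{1}{2}\lambda_{\min}(R)\|U\|^2$, whereas the paper takes a minimizer as given. Your remark that only feasibility of $U_k$ is used is exactly how the paper later reuses this proof in Theorem~\ref{thm:relaxed_lyapunov}.
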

	\begin{proof}
		We proceed by induction on $k$.
		The base case $k=0$ holds by hypothesis. Let $k\ge1$, assume that $z_k\in\SF_\ell$, and let $u_{k,1,\star},\ldots,u_{k,\ell-1,\star}$ denote a minimizer of ${\rm OCP}_k$ with predicted trajectory $x_{k,1,\star},\ldots,x_{k,\ell,\star}$ satisfying \eqref{eq:xfka}, \eqref{eq:decoupled_con}, and $x_{k,\ell,\star}\in\SX_\rmf$, where, from \eqref{eq:xk1_pred}, $x_{k,1,\star} = x_{k,1} = f(x_k,u_k)$.
		It follows from \eqref{eq:xk1}, \eqref{eq:xk1_pred}, and \eqref{eq:uk1_applied} that
		\begin{equation}
			x_{k+1} = f(x_k,u_k)=x_{k,1},\qquad u_{k+1}=u_{k,1,\star},\label{eq:xk1_eq_x1star}
		\end{equation}
		and thus \eqref{eq:xfka} implies
		\begin{equation}
			x_{k+1,1}=f(x_{k+1},u_{k+1})=f(x_{k,1},u_{k,1,\star})=x_{k,2,\star}.\label{eq:shift_anchor}
		\end{equation}
		Let $U_{k,\star}\triangleq\matl u_{k,1,\star}^\rmT&\cdots&u_{k,\ell-1,\star}^\rmT\matr^\rmT$, and consider the shifted candidate $\SSS_k(U_{k,\star})=\matl u_{k+1,1,\rmc}^{\rmT}&\cdots&u_{k+1,\ell-1,\rmc}^{\rmT}\matr^\rmT$ given by \eqref{eq:shift_op}, whose elements are
		\begin{equation}
			u_{k+1,j,\rmc} =
			\begin{cases}
				u_{k,j+1,\star}, & j\in\{1,\ldots,\ell-2\},\\
				\kappa_\rmf(x_{k,\ell,\star}), & j=\ell-1.
			\end{cases}
			\label{eq:candidate_U}
		\end{equation}
		It follows from \eqref{eq:xfka}, \eqref{eq:shift_anchor}, and \eqref{eq:candidate_U} that the candidate predicted states satisfy
		\begin{equation}
			x_{k+1,j,\rmc}=x_{k,j+1,\star},\quad j\in\{1,\ldots,\ell-1\},\label{eq:tilde_x_match}
		\end{equation}
		which, by the feasibility hypothesis at step $k$, satisfy \eqref{eq:decoupled_con}.
		At the terminal step, \eqref{eq:xfka}, \eqref{eq:candidate_U}, \eqref{eq:tilde_x_match}, and \ref{ts:ii} of Proposition~\ref{prop:terminal_set} imply
		\begin{equation}
			x_{k+1,\ell,\rmc} = f(x_{k,\ell,\star},\kappa_\rmf(x_{k,\ell,\star}))\in\SX_\rmf.\label{eq:tilde_x_ell_in_Xf}
		\end{equation}
		Since, in addition, \ref{ts:i} of Proposition~\ref{prop:terminal_set} implies $\kappa_\rmf(x_{k,\ell,\star})\in\SU$ and $x_{k+1,\ell,\rmc}\in\SX_\rmf\subseteq\SX$, it follows from \eqref{eq:decoupled_con} and \eqref{eq:tilde_x_match}--\eqref{eq:tilde_x_ell_in_Xf} that the candidate \eqref{eq:candidate_U} is feasible for ${\rm OCP}_{k+1}$, and thus $z_{k+1}\in\SF_\ell$.
	\end{proof}
	
	Recursive feasibility guarantees that the optimization can be solved at every step, but not that the closed loop approaches the origin.
	The value function supplies the missing Lyapunov argument.
	
	\subsection{Asymptotic Stability under State Feedback}
	\label{sec:stability}
	
	Under the one-step-delay convention of Remark~\ref{rem:delay}, the closed-loop dynamics evolve on the extended state \eqref{eq:zk_def} according to
	\begin{equation}
		z_{k+1}=\matl f(x_k,u_k)\\ \kappa_{\rm mpc}(z_k)\matr,
		\label{eq:closed_loop}
	\end{equation}
	where $\kappa_{\rm mpc}(z_k)\triangleq u_{k,1,\star}$ denotes the first element of the applied sequence.
	Note that \eqref{eq:fSCDC} implies $f(0,0)=0$, and hence $z=0$ is an equilibrium of \eqref{eq:closed_loop} provided that $\kappa_{\rm mpc}(0)=0$, which holds since $U=0$ is the unique minimizer of ${\rm OCP}_k$ at $z_k=0$.
	
	\begin{theorem}\label{thm:stability}\rm
		Consider the system \eqref{eq:xk1} controlled by receding-horizon control \eqref{eq:uk1_applied}, where, at each step $k\in\BBN$, the applied sequence is a minimizer of ${\rm OCP}_k$ given by \eqref{eq:OCP}. Assume that \ref{A:lipschitz}--\ref{A:stabilizable} and the hypotheses of Proposition~\ref{prop:terminal_set} hold.
		Then, the equilibrium $z=0$ of the closed-loop dynamics \eqref{eq:closed_loop} is asymptotically stable, and, for all $z_0\in\SF_\ell$, $\lim_{k\to\infty}z_k= 0$.
	\end{theorem}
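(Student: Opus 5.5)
The plan is the standard value-function Lyapunov argument, carried out on the extended state $z_k$ because of the one-step delay. I would define $V_\ell(z)$ as the optimal value of ${\rm OCP}_k$ at $z_k=z$, for $z\in\SF_\ell$. Existence of a minimizer would follow from continuity of $f$ and closedness of $\SX$, $\SU$, $\SX_\rmf$; coercivity in $U$ comes from $R\succ0$. Theorem~\ref{thm:feasibility} makes $\SF_\ell$ positively invariant, so $V_\ell(z_k)$ is defined along the whole closed loop.

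\emph{Descent inequality.} I would reuse the shifted candidate $\SSS_k(U_{k,\star})$ from the proof of Theorem~\ref{thm:feasibility}. By \eqref{eq:tilde_x_match} its predicted states are $x_{k,2,\star},\ldots,x_{k,\ell,\star}$, followed by $f(x_{k,\ell,\star},\kappa_\rmf(x_{k,\ell,\star}))$. Comparing costs term by term and applying \eqref{eq:Vf_descent} of Proposition~\ref{prop:terminal_set} to the terminal pair should give
\begin{equation*}
V_\ell(z_{k+1})-V_\ell(z_k)\le -\tfrac12\big(x_{k,1}^\rmT Q x_{k,1}+u_{k+1}^\rmT R u_{k+1}\big)=-\tfrac12\big(x_{k+1}^\rmT Q x_{k+1}+u_{k+1}^\rmT R u_{k+1}\big).
\end{equation*}
Here $V_\rmf(x)=x^\rmT Px$ and the cost uses $\tfrac12x^\rmT Px$. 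The factor $\tfrac12$ is therefore consistent with \eqref{eq:Vf_descent} after halving. The equality uses $x_{k+1}=x_{k,1}$ and $u_{k+1}=u_{k,1,\star}$. With $Q\succ0$ from the hypotheses of Proposition~\ref{prop:terminal_set}, the right-hand side is bounded above by $-c\|z_{k+1}\|^2$ for some $c>0$.

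\emph{Lower bound and convergence.} We have $V_\ell(z)\ge \tfrac12 x_1^\rmT Qx_1$ with $x_1=f(x,u)$, which does not directly bound $\|z\|$ from below. Two routes give convergence instead. The first is telescoping: summing the descent inequality gives $\sum_k \|z_{k+1}\|^2\le V_\ell(z_0)/c<\infty$, hence $z_k\to0$ for every $z_0\in\SF_\ell$. The second route would use $V'(z)\triangleq V_\ell(z)+\tfrac12 x^\rmT Qx+\tfrac12 u^\rmT Ru$ if a lower bound were needed; I expect the telescoping route to suffice.

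\emph{Upper bound and Lyapunov stability.} For stability I need $V_\ell(z)\le \bar c\|z\|^2$ near $z=0$. I would bound $V_\ell$ by the cost of the feasible sequence $u_j=\kappa_\rmf(x_j)$. For $\|z\|$ small, $x_1=f(x,u)$ satisfies $\|x_1\|\le L_\rmf\|z\|$ as in Corollary~\ref{cor:contraction_zk}, so $x_1\in\SX_\rmf$. By invariance the closed-loop trajectory stays in $\SX_\rmf\subseteq\SX$ with controls in $\SU$, and its cost is at most $\tfrac12 V_\rmf(x_1)\le\tfrac12\|P\|L_\rmf^2\|z\|^2$. This telescoping step uses \eqref{eq:Vf_descent} and needs $Q\succ0$, $R\succ0$ together with \ref{A:bounded}.

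\emph{Main obstacle.} The hard step is turning this into $\epsilon$–$\delta$ stability, since $V_\ell$ upper-bounds $\|x_{k+1}\|^2$ but not $\|u_k\|$ directly. Given small $V_\ell(z_k)$, I would bound $\|u_{k+1}\|^2\le 2V_\ell(z_k)/\lambda_{\min}(R)$ and $\|x_{k+1}\|^2\le 2V_\ell(z_k)/\lambda_{\min}(Q)$. These hold because $u_{k+1}$ and $x_{k+1}=x_{k,1}$ both appear in the cost at step $k$. Thus $\|z_{k+1}\|^2\le c'V_\ell(z_k)\le c'V_\ell(z_0)\le c'\bar c\|z_0\|^2$ by monotonicity, and $\|z_0\|$ itself is small. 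Together with attractivity this yields asymptotic stability.
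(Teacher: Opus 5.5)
Your proposal is correct and follows essentially the same route as the paper. The common steps are these: the shifted candidate combined with the terminal descent property gives $V(z_{k+1})-V(z_k)\le -c\|z_{k+1}\|^2$; telescoping gives convergence for every feasible initial condition; the terminal-law candidate gives the local quadratic upper bound and feasibility near the origin; and Lyapunov stability follows from $\|z_{k+1}\|^2\le c'V(z_k)\le c'V(z_0)$. The differences are only cosmetic. You justify existence of a minimizer, which the theorem simply assumes. You also read the bound $\|z_{k+1}\|^2\le c'V(z_k)$ directly off the stage costs at step $k$, whereas the paper extracts it from the descent inequality; this is the same estimate.
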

	\begin{proof}
		For all $k\in\BBN$ and $z_k\in\SF_\ell$, define the value function $V_\star\colon\SF_\ell\to[0,\infty)$ by
		\begin{equation*}
			V_\star(z_k)\triangleq J_k(u_{k,1,\star},\ldots,u_{k,\ell-1,\star}),
		\end{equation*}
		where $u_{k,1,\star},\ldots,u_{k,\ell-1,\star}$ denote a minimizer of ${\rm OCP}_k$.
		
		For all $k\in\BBN$ such that $z_k\in\SF_\ell$, consider the candidate sequence \eqref{eq:candidate_U} for ${\rm OCP}_{k+1}$, which is feasible by the proof of Theorem~\ref{thm:feasibility}.
		It follows from \eqref{Jdefn}, \eqref{eq:xk1_eq_x1star}, \eqref{eq:candidate_U}, and \eqref{eq:tilde_x_match} that
		\begin{align}
			&J_{k+1}(u_{k+1,1,\rmc},\ldots,u_{k+1,\ell-1,\rmc}) - V_\star(z_k)\nn\\
			&\quad= -\tfrac{1}{2}x_{k,1}^\rmT Q\, x_{k,1} - \tfrac{1}{2}u_{k,1,\star}^\rmT R\, u_{k,1,\star} + \Delta_\rmf,
			\label{eq:costdiff}
		\end{align}
		where
		\begin{align}
			\Delta_\rmf &\triangleq \tfrac{1}{2}\Big(V_\rmf\big(f(x_{k,\ell,\star},\kappa_\rmf(x_{k,\ell,\star}))\big) - V_\rmf(x_{k,\ell,\star})\nn\\
			&\qquad + x_{k,\ell,\star}^\rmT Q\, x_{k,\ell,\star} + \kappa_\rmf(x_{k,\ell,\star})^\rmT R\, \kappa_\rmf(x_{k,\ell,\star})\Big).\label{eq:Deltaf_def}
		\end{align}
		Since $x_{k,\ell,\star}\in\SX_\rmf$, it follows from \ref{ts:iii} of Proposition~\ref{prop:terminal_set}, whose terminal ingredients $P\succ0$ and $\kappa_\rmf$ exist by \ref{A:lipschitz} and \ref{A:stabilizable}, that
		\begin{equation}
			\Delta_\rmf\le 0.
			\label{eq:Deltaf_neg}
		\end{equation}
		By the optimality of the minimizer of ${\rm OCP}_{k+1}$,
		\begin{equation}
			V_\star(z_{k+1})\le J_{k+1}(u_{k+1,1,\rmc},\ldots,u_{k+1,\ell-1,\rmc}),\label{eq:opt_bound}
		\end{equation}
		and thus, since \eqref{eq:xk1_eq_x1star} implies $x_{k,1}=x_{k+1}$ and $u_{k,1,\star}=u_{k+1}$, combining \eqref{eq:costdiff}--\eqref{eq:opt_bound} yields
		\begin{align}
			V_\star(z_{k+1}) - V_\star(z_k) &\le -\tfrac{1}{2}x_{k+1}^\rmT Q\, x_{k+1} - \tfrac{1}{2}u_{k+1}^\rmT R\, u_{k+1}\nn\\
			&\le -c\,\|z_{k+1}\|^2,
			\label{eq:Vdecrease}
		\end{align}
		where, since $Q\succ0$ and $R\succ0$ by hypothesis, $c\triangleq \tfrac{1}{2}\min\{\lambda_{\min}(Q),\lambda_{\min}(R)\}>0$.
		
		Let $z_0\in\SF_\ell$.
		Since \eqref{eq:Vdecrease} implies that $\{V_\star(z_k)\}_{k=0}^\infty$ is nonincreasing and bounded below by zero, summing \eqref{eq:Vdecrease} from $0$ to $N-1$ yields, for all $N\in\BBZ^+$,
		\begin{equation}
			0\le c\sum_{k=1}^{N}\|z_{k}\|^2\le V_\star(z_0)-V_\star(z_N)\le V_\star(z_0),\label{eq:summability}
		\end{equation}
		where the lower and upper bounds imply that $\sum_{k=1}^{\infty}\|z_k\|^2$ exists, and thus $\lim_{k\to\infty}z_k=0$.
		
		To establish a local upper bound on $V_\star$, it follows from \eqref{eq:fSCDC} and \ref{A:bounded} that, for all $k\in\BBN$ and all $z_k=\matl x_k^\rmT& u_k^\rmT\matr^\rmT\in\BBR^{n+m}$,
		\begin{equation}
			\|f(x_k,u_k)\|\le \bar a\|x_k\|+\bar b\|u_k\|\le L_f \|z_k\|,\label{eq:f_lip0}
		\end{equation}
		where $L_f \triangleq \sqrt{2}\max\{\bar a,\bar b\}$.
		Define $\delta_1\triangleq \sqrt{\alpha/\lambda_{\max}(P)}/L_f$, and let $k\in\BBN$ be such that $\|z_k\|\le\delta_1$.
		Then, \eqref{eq:f_lip0} implies $V_\rmf(f(x_k,u_k))\le\lambda_{\max}(P)L_f^2\|z_k\|^2\le\alpha$, and thus \eqref{eq:Xf_def} implies $f(x_k,u_k)\in\SX_\rmf$.
		Consider the candidate sequence for ${\rm OCP}_k$ defined by $u_{k,j} = \kappa_\rmf(x_{k,j})$ for $j\in\{1,\ldots,\ell-1\}$, where $x_{k,1}=f(x_k,u_k)$ and $x_{k,j+1}=f(x_{k,j},\kappa_\rmf(x_{k,j}))$.
		By \ref{ts:i} and \ref{ts:ii} of Proposition~\ref{prop:terminal_set}, this candidate is feasible, and summing \eqref{eq:Vf_descent} along the candidate trajectory yields
		\begin{align*}
			&\tfrac{1}{2}\sum_{j=1}^{\ell-1}\(x_{k,j}^\rmT Qx_{k,j}+u_{k,j}^\rmT Ru_{k,j}\)+\tfrac{1}{2}V_\rmf(x_{k,\ell})\le \tfrac{1}{2}V_\rmf(x_{k,1}),
		\end{align*}
		and thus, for all $k\in\BBN$ satisfying $\|z_k\|\le\delta_1$,
		\begin{equation}
			V_\star(z_k)\le \tfrac{1}{2}V_\rmf(f(x_k,u_k))\le  c_2\|z_k\|^2,\label{eq:V_upper}
		\end{equation}
		where $c_2\triangleq \tfrac{1}{2}\lambda_{\max}(P)L_f^2$.
		
		Let $\epsilon\in(0,\infty)$, and define $\delta\triangleq\min\{\delta_1,\ \epsilon,\ \epsilon\sqrt{c/c_2}\}$.
		Let $\|z_0\|\le\delta$.
		Since $f(x_0,u_0)\in\SX_\rmf$, the terminal candidate above shows $z_0\in\SF_\ell$.
		It follows from \eqref{eq:Vdecrease}, \eqref{eq:summability}, and \eqref{eq:V_upper} that, for all $k\in\BBZ^+$,
		\begin{equation*}
			c\|z_k\|^2\le V_\star(z_{k-1})\le V_\star(z_0)\le c_2\|z_0\|^2\le c_2\,\delta^2,
		\end{equation*}
		and thus $\|z_k\|\le \delta\sqrt{c_2/c}\le\epsilon$; furthermore, $\|z_0\|\le\delta\le\epsilon$.
		Hence, the equilibrium $z=0$ is Lyapunov stable, which, combined with $\lim_{k\to\infty}z_k=0$, confirms asymptotic stability.
	\end{proof}
	
	\begin{remark}\rm
		\label{rem:semidefQ}
		If $Q$ is positive semidefinite but not positive definite, then \eqref{eq:Vdecrease} yields $u_{k}\to0$ and $Q^{1/2}x_{k}\to0$, and convergence $x_k\to 0$ can be recovered from the detectability of $(A_0,Q^{1/2})$ in \ref{A:stabilizable}; see \cite[Theorem~2.24]{rawlings2017model}.
		For clarity, Theorem~\ref{thm:stability} is stated with $Q\succ 0$, which is also required by Proposition~\ref{prop:terminal_set}.
	\end{remark}
	
	\begin{remark}\rm
		\label{rem:terminal_equality}
		Theorems~\ref{thm:feasibility} and \ref{thm:stability} also hold with the terminal equality constraint $x_{k,\ell}=0$ in place of $x_{k,\ell}\in\SX_\rmf$, with $\kappa_\rmf\equiv0$ and $V_\rmf\equiv0$ in \eqref{eq:candidate_U} and \eqref{eq:Deltaf_def}: since $f(0,0)=0$ by \eqref{eq:fSCDC}, the appended candidate control keeps the terminal state at the origin, and $\Delta_\rmf=0$ in \eqref{eq:costdiff}.
		This variant, which dates to \cite{keerthi1988optimal}, does not require \ref{A:stabilizable} and is the appropriate formulation for systems whose linearization at the origin is not stabilizable, such as those subject to Brockett's obstruction \cite{brockett1983asymptotic}; it is used in Section~\ref{sec:ex_brockett}, where the terminal equality constraint is implemented through its exact-penalty relaxation \cite{rawlings2017model}.
	\end{remark}
	
	Theorem~\ref{thm:stability} presumes that the applied sequence minimizes ${\rm OCP}_k$, which iSCD-MPC does not deliver: the stopping criterion \eqref{eq:rhokk} terminates the iteration after finitely many steps, and Proposition~\ref{prop:kkt_gap} bounds the residual that remains.
	The following result removes that presumption.
	
	\subsection{Suboptimal Stability under Early Termination}
	\label{sec:subopt}
	
	In practice, for all $k\in\BBN$, the applied sequence is $U_{k|\rho_k}$, which is generally not a minimizer of ${\rm OCP}_k$.
	Following the framework of \cite{scokaert1999suboptimal}, the next result shows that stability is retained provided that the applied sequence is feasible and improves upon the shifted candidate.
	For all $k\in\BBZ^+$, the shifted candidate constructed from the applied sequence at step $k-1$ is $\SSS_{k-1}(U_{k-1|\rho_{k-1}})$ given by \eqref{eq:shift_op}, which by \eqref{eq:warm_start} coincides with the warm start $U_{k|1}$.
	This construction coincides exactly with the warm-start initialization \eqref{eq:warm_start}.
	
	\begin{theorem}\label{thm:relaxed_lyapunov}\rm
		Consider the system \eqref{eq:xk1} controlled by iSCD-MPC \eqref{eq:fSCDC}--\eqref{eq:applied_control} with stopping criterion \eqref{eq:rhokk} and warm start \eqref{eq:warm_start}. Assume that \ref{A:lipschitz}--\ref{A:stabilizable} and the hypotheses of Proposition~\ref{prop:terminal_set} hold with $Q\succ0$.
		Furthermore, assume that, for all $k\in\BBN$, the applied sequence $U_{k|\rho_k}$ is feasible for ${\rm OCP}_k$ given by \eqref{eq:OCP} and, for all $k\in\BBZ^+$, satisfies the descent condition
		\begin{equation}
			\bar J_k(U_{k|\rho_k})\le \bar J_k\big(\SSS_{k-1}(U_{k-1|\rho_{k-1}})\big),\label{eq:subopt_descent}
		\end{equation}
		where $\bar J_k$ is given by \eqref{eq:Jbar_def} with $P$ given by \eqref{eq:DARE}.
		Then, for all $z_0\in\SF_\ell$ such that $U_{0|\rho_0}$ is feasible for ${\rm OCP}_0$, $\lim_{k\to\infty}z_k= 0$.
		If, in addition, there exists $c_V\in(0,\infty)$ such that, for all $z_0$ in a neighborhood of the origin, $\bar J_0(U_{0|\rho_0})\le c_V\|z_0\|^2$, then the equilibrium $z=0$ of the closed-loop dynamics is asymptotically stable.
	\end{theorem}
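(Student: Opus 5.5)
The plan is to rerun the Lyapunov argument of Theorem~\ref{thm:stability}, using the realized cost $W_k\triangleq \bar J_k(U_{k|\rho_k})$ in place of the value function $V_\star$. The descent condition \eqref{eq:subopt_descent} will take over the role that optimality played in \eqref{eq:opt_bound}.

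First, the standing assumption that $U_{k|\rho_k}$ is feasible for ${\rm OCP}_k$ gives $z_k\in\SF_\ell$ for all $k$. Its predicted terminal state, call it $x_{k,\ell}$, therefore lies in $\SX_\rmf$. This is exactly what the proof of Theorem~\ref{thm:feasibility} needs to make the shifted candidate $\SSS_k(U_{k|\rho_k})$ feasible for ${\rm OCP}_{k+1}$. The key observation is that nothing in that proof, or in the cost identity \eqref{eq:costdiff}, uses optimality. Both use only feasibility, the delay relations $x_{k+1}=x_{k,1}$ and $u_{k+1}=u_{k,1|\rho_k}$ from \eqref{eq:applied_control}, and the anchor shift \eqref{eq:shift_anchor}, which becomes $x_{k+1,1}=f(x_{k,1},u_{k,1|\rho_k})=x_{k,2}$.

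Repeating the computation \eqref{eq:costdiff}--\eqref{eq:Deltaf_neg} with $U_{k|\rho_k}$ in place of the minimizer, and invoking \ref{ts:iii} of Proposition~\ref{prop:terminal_set} at $x_{k,\ell}\in\SX_\rmf$, I expect
\[
\bar J_{k+1}\big(\SSS_k(U_{k|\rho_k})\big)-W_k\le -\tfrac12 x_{k+1}^\rmT Q x_{k+1}-\tfrac12 u_{k+1}^\rmT R u_{k+1}.
\]
Here $\bar J_k$ coincides with $J_k$ evaluated at $P_k=P$, since both are computed along the nonlinear rollout from $x_{k,1}$. Combining this with \eqref{eq:subopt_descent} at step $k+1$ yields $W_{k+1}-W_k\le -c\|z_{k+1}\|^2$, with $c$ as in \eqref{eq:Vdecrease}. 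Telescoping as in \eqref{eq:summability} gives $\sum_{k\ge1}\|z_k\|^2\le W_0/c<\infty$, and hence $z_k\to0$ from every admissible $z_0$.

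For Lyapunov stability, I use the additional hypothesis $W_0\le c_V\|z_0\|^2$ near the origin. Since $W$ is nonincreasing, for all $k\ge1$ we get $c\|z_k\|^2\le W_{k-1}\le W_0\le c_V\|z_0\|^2$. Given $\epsilon>0$, I will choose $\delta\le\min\{\epsilon,\epsilon\sqrt{c/c_V}\}$, further shrunk so that the ball of radius $\delta$ lies in the neighborhood where the bound holds and inside $\SF_\ell$. The latter is available via the argument below \eqref{eq:V_upper}, using the radius $\delta_1$. This gives $\|z_k\|\le\epsilon$ for all $k$, and together with attractivity it yields asymptotic stability.

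The main subtlety is that the closed loop is no longer a memoryless map of $z_k$. The applied control depends on the warm start, hence on $U_{k-1|\rho_{k-1}}$, so $W_k$ is a function of the algorithm's internal state rather than of $z_k$ alone. I would handle this by making every argument a trajectory-wise bound: the decrease inequality and the quadratic bound on $W_0$ control $\|z_k\|$ directly, and no continuity or upper bound on $W$ as a function of $z$ is needed beyond $k=0$. I will also check that the first step ($k=0$ to $k=1$) is covered, since \eqref{eq:subopt_descent} is only imposed for $k\in\BBZ^+$. This is exactly what the chain needs, because the decrease from $W_0$ to $W_1$ uses the condition at $k=1$.
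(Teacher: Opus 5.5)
Your proposal is correct and follows the paper's proof essentially step for step. The realized cost $\bar J_k(U_{k|\rho_k})$ serves as the Lyapunov function, and the shifted-candidate computation with $\Delta_{\rm f}\le 0$, combined with the descent condition at step $k+1$, replaces optimality to give the decrease $-c\|z_{k+1}\|^2$; telescoping then yields attractivity, and the quadratic bound on the initial cost yields Lyapunov stability. Your trajectory-wise reading (the realized cost depends on the warm start, not on $z_k$ alone) and your explicit requirement $\delta\le\epsilon$ slightly tighten the paper's write-up, which treats $V(z_k)$ as a function of $z_k$ and uses $\|z_0\|\le\delta\le\epsilon$ without imposing $\delta\le\epsilon$.
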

	\begin{proof}
		For all $k\in\BBN$ and $z_k\in\SF_\ell$ such that $U_{k|\rho_k}$ is feasible for ${\rm OCP}_k$, define $V\colon\SF_\ell\to[0,\infty)$ by
		\begin{equation*}
			V(z_k)\triangleq \bar J_k(U_{k|\rho_k}).
		\end{equation*}
		Let $k\in\BBN$, and suppose $U_{k|\rho_k}$ is feasible for ${\rm OCP}_k$.
		By the proof of Theorem~\ref{thm:feasibility}, the shifted candidate $\SSS_k(U_{k|\rho_k})$ is feasible for ${\rm OCP}_{k+1}$.
		Repeating the computation \eqref{eq:costdiff}--\eqref{eq:Deltaf_neg} with $U_{k|\rho_k}$ in place of the minimizer yields
		\begin{equation}
			\bar J_{k+1}\big(\SSS_k(U_{k|\rho_k})\big)\le V(z_k) - \tfrac{1}{2}x_{k+1}^\rmT Q\, x_{k+1} - \tfrac{1}{2}u_{k+1}^\rmT R\, u_{k+1},\label{eq:Jk1_bound}
		\end{equation}
		where $x_{k+1}=x_{k,1}$ and $u_{k+1}=u_{k,1|\rho_k}$, and where $\Delta_\rmf\le 0$ holds by \ref{ts:iii} of Proposition~\ref{prop:terminal_set}, whose terminal ingredients exist by \ref{A:lipschitz} and \ref{A:stabilizable}.
		It follows from \eqref{eq:subopt_descent} at step $k+1$ and \eqref{eq:Jk1_bound} that
		\begin{equation}
			V(z_{k+1})-V(z_k)\le -c\|z_{k+1}\|^2,\label{eq:V_subopt_decrease}
		\end{equation}
		where, since $Q\succ0$ and $R\succ0$ by hypothesis, $c\triangleq \tfrac{1}{2}\min\{\lambda_{\min}(Q),\lambda_{\min}(R)\}>0$.
		
		Let $z_0\in\SF_\ell$ be such that $U_{0|\rho_0}$ is feasible for ${\rm OCP}_0$.
		Since \eqref{eq:V_subopt_decrease} implies that $\{V(z_k)\}_{k=0}^\infty$ is nonincreasing and bounded below by zero, summing \eqref{eq:V_subopt_decrease} from $0$ to $N-1$ yields, for all $N\in\BBZ^+$,
		\begin{equation}
			0\le c\sum_{k=1}^{N}\|z_{k}\|^2\le V(z_0)-V(z_N)\le V(z_0),\label{eq:summability_subopt}
		\end{equation}
		where the lower and upper bounds imply that $\sum_{k=1}^{\infty}\|z_k\|^2$ exists, which implies $\lim_{k\to\infty}z_k=0$.
		
		Finally, assume there exists $c_V\in(0,\infty)$ such that, for all $z_0$ in a neighborhood of the origin, $V(z_0)\le c_V\|z_0\|^2$.
		Let $\epsilon\in(0,\infty)$, and choose $\delta\in(0,\infty)$ such that $\|z_0\|\le\delta$ implies $V(z_0)\le c_V\|z_0\|^2$ and $\delta\le \epsilon\sqrt{c/c_V}$.
		It follows from \eqref{eq:V_subopt_decrease} and \eqref{eq:summability_subopt} that, for all $k\in\BBZ^+$,
		\begin{equation*}
			c\|z_k\|^2\le V(z_{k-1})\le V(z_0)\le c_V\|z_0\|^2\le c_V\,\delta^2,
		\end{equation*}
		and thus $\|z_k\|\le \delta\sqrt{c_V/c}\le\epsilon$; furthermore, $\|z_0\|\le\delta\le\epsilon$.
		Hence, the equilibrium $z=0$ is Lyapunov stable, which, combined with $\lim_{k\to\infty}z_k=0$, confirms asymptotic stability.
	\end{proof}
	
	\begin{remark}\rm
		\label{rem:descent_practice}
		The descent condition \eqref{eq:subopt_descent} can be enforced directly: since the warm start \eqref{eq:warm_start} equals $\SSS_{k-1}(U_{k-1|\rho_{k-1}})$, the algorithm evaluates $\bar J_k$ at that sequence when $i=1$ and accepts $U_{k|\rho_k}$ only if \eqref{eq:subopt_descent} holds, applying the warm start otherwise; this safeguard preserves \eqref{eq:V_subopt_decrease} regardless of the number of iterations performed.
		Feasibility of $U_{k|\rho_k}$ with respect to the nonlinear dynamics holds exactly at the fixed point, where the frozen-coefficient and nonlinear predicted trajectories coincide (see the proof of Proposition~\ref{prop:kkt_gap}); under early termination, the constraint mismatch is bounded by a constant times $\|U_{k|\rho_k}-U_{k,\star}\|\le\gamma^{\rho_k-1}\|U_{k|1}-U_{k,\star}\|$ by \eqref{eq:xj_lip} and \eqref{eq:geo_convergence}, and can be absorbed by constraint tightening.
		
		The same safeguard supplies the quadratic bound assumed in Theorem~\ref{thm:relaxed_lyapunov}, provided it is applied at $k=0$ as well.
		Note first that the descent condition \eqref{eq:subopt_descent} is imposed for $k\in\BBZ^+$ and that the initialization at $k=0$ is the cold start \eqref{eq:u0_init}, which carries no bound of the required form; the bound must therefore be secured separately at the initial step.
		Let $\delta_1$ be as in the proof of Theorem~\ref{thm:stability}, and let $\|z_0\|\le\delta_1$, so that $f(x_0,u_0)\in\SX_\rmf$.
		The terminal-law sequence defined by $u_{0,j}\triangleq\kappa_\rmf(x_{0,j})$ for $j\in\{1,\ldots,\ell-1\}$, where $x_{0,1}=f(x_0,u_0)$ and $x_{0,j+1}=f(x_{0,j},\kappa_\rmf(x_{0,j}))$, is feasible for ${\rm OCP}_0$ by \ref{ts:i} and \ref{ts:ii} of Proposition~\ref{prop:terminal_set}, and summing \eqref{eq:Vf_descent} along it gives a rollout cost of at most $\tfrac{1}{2}V_\rmf(f(x_0,u_0))\le c_2\|z_0\|^2$, with $c_2$ as in \eqref{eq:V_upper}.
		Hence, if at $k=0$ the algorithm accepts $U_{0|\rho_0}$ only when its rollout cost does not exceed that of the terminal-law sequence, then $\bar J_0(U_{0|\rho_0})\le c_2\|z_0\|^2$, and the hypothesis of Theorem~\ref{thm:relaxed_lyapunov} holds with $c_V=c_2$.
		The quadratic bound is thus not an additional assumption on the closed loop but a property that the algorithm can enforce by the same comparison it already performs at every other step.
	\end{remark}
	
	\subsection{Domain of Attraction}
	\label{sec:doa}
	
	\begin{proposition}\label{prop:doa_expansion}\rm
		For all $\ell\in\{2,3,\ldots\}$, let $\SF_\ell$ denote the feasible set \eqref{eq:Fl_def} with horizon $\ell$, and assume that the hypotheses of Proposition~\ref{prop:terminal_set} hold.
		Then, for all $\ell_1,\ell_2\in\{2,3,\ldots\}$ satisfying $\ell_1<\ell_2$, $\SF_{\ell_1}\subseteq \SF_{\ell_2}$.
	\end{proposition}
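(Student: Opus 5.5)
It suffices to prove $\SF_\ell\subseteq\SF_{\ell+1}$ for every $\ell\in\{2,3,\ldots\}$. The general claim $\SF_{\ell_1}\subseteq\SF_{\ell_2}$ then follows by induction on $\ell_2-\ell_1$, chaining the inclusions $\SF_{\ell_1}\subseteq\SF_{\ell_1+1}\subseteq\cdots\subseteq\SF_{\ell_2}$.

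\textbf{Construction of an extended sequence.} Let $z=\matl x^\rmT& u^\rmT\matr^\rmT\in\SF_\ell$. By \eqref{eq:Fl_def}, there exist $u_1,\ldots,u_{\ell-1}\in\SU$ whose trajectory, defined by $x_1=f(x,u)$ and $x_{j+1}=f(x_j,u_j)$, satisfies $x_j\in\SX$ for $j\in\{1,\ldots,\ell-1\}$ and $x_\ell\in\SX_\rmf$. For horizon $\ell+1$, I will use the sequence $u_1,\ldots,u_{\ell-1},u_\ell$ with $u_\ell\triangleq\kappa_\rmf(x_\ell)$. This is the same append-the-terminal-law step that underlies the shifted candidate \eqref{eq:shift_op}, except that the first element is not discarded here.

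\textbf{Verification of the constraints.} The first $\ell$ predicted states are unchanged, because they depend only on $x$, $u$, and $u_1,\ldots,u_{\ell-1}$. Hence $x_j\in\SX$ for $j\in\{1,\ldots,\ell-1\}$. For $x_\ell$ itself, which is now an intermediate state, \ref{ts:i} of Proposition~\ref{prop:terminal_set} gives $x_\ell\in\SX_\rmf\subseteq\SX$. The same statement gives the new control $u_\ell=\kappa_\rmf(x_\ell)\in\SU$. Finally, \ref{ts:ii} of Proposition~\ref{prop:terminal_set} gives the new terminal state $x_{\ell+1}=f(x_\ell,\kappa_\rmf(x_\ell))\in\SX_\rmf$. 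Every condition in \eqref{eq:Fl_def} therefore holds with horizon $\ell+1$, and so $z\in\SF_{\ell+1}$.

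\textbf{Anticipated difficulty.} I do not expect a real obstacle. The one point needing care is that $\SX_\rmf$ and $\kappa_\rmf$ do not depend on the horizon, so that Proposition~\ref{prop:terminal_set} applies uniformly for every $\ell$. They are indeed horizon-independent, since $P$, $K_\rmf$, and $\alpha$ in \eqref{eq:DARE}, \eqref{eq:Kf_def}, and \eqref{eq:alpha_def} are built only from $(A_0,B_0)$, $Q$, $R$, and the constraint sets. The other point is that $\SX_\rmf\subseteq\SX$, so that the old terminal state satisfies the stage constraint once it becomes an intermediate state.
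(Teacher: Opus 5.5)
Your proposal is correct and follows essentially the same argument as the paper: append the terminal law $\kappa_\rmf$ and use statements \ref{ts:i} and \ref{ts:ii} of Proposition~\ref{prop:terminal_set} to verify the constraints. The only difference is cosmetic. You prove $\SF_\ell\subseteq\SF_{\ell+1}$ and chain the inclusions, whereas the paper appends $\kappa_\rmf(x_j)$ for all $j\in\{\ell_1,\ldots,\ell_2-1\}$ at once and applies \ref{ts:ii} inductively within a single extension.
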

	\begin{proof}
		The result is the standard horizon monotonicity of terminal-set formulations \cite[Prop.~2.16]{rawlings2017model}, and follows by appending the terminal control law.
		Let $z\in\SF_{\ell_1}$ with feasible sequence $u_1,\ldots,u_{\ell_1-1}$ and trajectory $x_1,\ldots,x_{\ell_1}$, and extend it by $u_j\triangleq\kappa_\rmf(x_j)$ for $j\in\{\ell_1,\ldots,\ell_2-1\}$.
		Since $x_{\ell_1}\in\SX_\rmf$, \ref{ts:ii} of Proposition~\ref{prop:terminal_set} implies by induction that $x_j\in\SX_\rmf\subseteq\SX$ for all $j\in\{\ell_1,\ldots,\ell_2\}$, and \ref{ts:i} implies $\kappa_\rmf(x_j)\in\SU$; the extended sequence is therefore feasible for horizon $\ell_2$, and thus $z\in\SF_{\ell_2}$.
	\end{proof}

	The results above concern the nominal system \eqref{eq:xk1}.
	We close the analysis by admitting bounded disturbances.
	
	\subsection{Robust Feasibility under Bounded Disturbances}
	\label{sec:robust}
	
	Consider the disturbed system
	\begin{equation}
		x_{k+1}=f(x_k,u_k)+w_k,\label{eq:disturbed}
	\end{equation}
	where, for all $k\in\BBN$, $w_k\in\SW\subset\BBR^n$ and $\SW$ is compact and convex with $0\in\SW$.
	Following tube-based MPC \cite{mayne2005robust,mayne2011tube}, let $\bar x_{k}$ and $\bar u_{k}$ denote a nominal trajectory and control satisfying $\bar x_{k+1}=f(\bar x_k,\bar u_k)$, and let the applied control be the ancillary feedback
	\begin{equation}
		u_k= \bar u_{k}+K_\rmf(x_k-\bar x_{k}),\label{eq:tube_control}
	\end{equation}
	where $K_\rmf$ is given by \eqref{eq:Kf_def}.
	It follows from \eqref{eq:disturbed}, \eqref{eq:tube_control}, and the nominal dynamics that the tracking error $e_k\triangleq x_k-\bar x_k$ evolves according to
	\begin{equation}
		e_{k+1} = f(\bar x_k+e_k,\bar u_k+K_\rmf e_k)-f(\bar x_k,\bar u_k)+w_k.
		\label{eq:e_dynamics}
	\end{equation}
	
	A set $\SZ\subset\BBR^n$ is \emph{robustly positively invariant} for \eqref{eq:e_dynamics} over a nominal operating domain $\SX_{\rm tight}\times\SU_{\rm tight}\subseteq\SX\times\SU$ if $e_k\in\SZ$ implies $e_{k+1}\in\SZ$ for all $\bar x_k\in\SX_{\rm tight}$, all $\bar u_k\in\SU_{\rm tight}$, and all $w_k\in\SW$; see \cite{rakovic2005invariant} for the linear case.
	
	To guarantee that such an invariant cross-section exists and to bound the linearization mismatch incurred by the nonlinear flow, we impose the following condition.
	
	\begin{enumerate}[label=({A\arabic*}),itemindent=5pt]
		\setcounter{enumi}{4}
		\item \label{A:tube}
		There exist compact convex sets $\SE,\SZ\subset\BBR^n$, with $0\in\SE$, such that, defining the tightened constraint sets
		\begin{equation}
			\SX_{\rm tight}\triangleq \SX\ominus\SZ,\qquad \SU_{\rm tight}\triangleq \SU\ominus (K_\rmf\SZ),\label{eq:tightened}
		\end{equation}
		the following statements hold:
		\begin{enumerate}[label=({\it \alph*}),itemindent=1pt]
			\item for all $\bar x\in\SX_{\rm tight}$, $\bar u\in\SU_{\rm tight}$, and $e\in\SZ$,
			\begin{equation}
				f(\bar x+e,\bar u+K_\rmf e)-f(\bar x,\bar u)\in \{A_{\rm cl}\, e\}\oplus\SE.\label{eq:mismatch}
			\end{equation}
			\item $A_{\rm cl}\SZ\oplus\SE\oplus\SW\subseteq\SZ$.\label{item:rpi}
		\end{enumerate}
	\end{enumerate}
	
	Condition \ref{A:tube}(a) bounds the deviation of the nonlinear error dynamics from the linear error dynamics generated by $A_{\rm cl}$, whereas \ref{A:tube}(b) absorbs that mismatch and the disturbance into $\SZ$, establishing the robust positive invariance of $\SZ$.
	If $f$ is linear, then \eqref{eq:mismatch} holds with $\SE=\{0\}$, and \ref{A:tube}(b) holds with $\SZ$ equal to the minimal robust positively invariant set of $A_{\rm cl}$ and $\SW$ \cite{rakovic2005invariant}, which is compact since \eqref{eq:Riccati_identity} implies that $A_{\rm cl}$ is Schur.
	For nonlinear $f$, \ref{A:lipschitz} implies that the left-hand side of \eqref{eq:mismatch} deviates from $A_{\rm cl}e$ by a term of order $\|e\|(\|\bar x\|+\|\bar u\|+\|e\|)$, so that \ref{A:tube} is a smallness condition on $\SW$ and on the operating domain; see \cite{mayne2011tube} for constructions.
	
	\begin{proposition}\label{prop:robust_feasibility}\rm
		Consider the disturbed system \eqref{eq:disturbed} with control \eqref{eq:tube_control}, and assume that \ref{A:tube} holds, the nominal iSCD-MPC optimization with constraints \eqref{eq:tightened} is feasible at $k=0$, and $x_0-\bar x_{0}\in\SZ$.
		Then, for all $k\in\BBN$, $x_k\in\SX$ and $u_k\in\SU$.
	\end{proposition}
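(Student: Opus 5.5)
The plan follows the standard tube-MPC argument \cite{mayne2005robust,mayne2011tube}, in two parts.
\begin{itemize}
\item[] \emph{Nominal part.} The nominal iSCD-MPC, posed with the tightened sets \eqref{eq:tightened}, stays feasible for all $k$.
\item[] \emph{Error part.} The error $e_k=x_k-\bar x_k$ stays in $\SZ$.
\end{itemize}
Together these give the constraint satisfaction by Minkowski-sum arguments.

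For the nominal part, I will invoke Theorem~\ref{thm:feasibility}, with $\SX$ and $\SU$ replaced by $\SX_{\rm tight}$ and $\SU_{\rm tight}$, and with a terminal set contained in $\SX_{\rm tight}$ on which $K_\rmf$ maps into $\SU_{\rm tight}$. The nominal system is undisturbed, and the nominal pair $(\bar x_k,\bar u_k)$ is propagated by its own shifted candidate \eqref{eq:shift_op}. Hence recursive feasibility gives $\bar x_k\in\SX_{\rm tight}$ and $\bar u_k\in\SU_{\rm tight}$ for all $k\in\BBN$. The tightened sets are closed, and they still contain the origin in their interior once $\SZ$ is small; I take this to be implicit in the feasibility hypothesis at $k=0$. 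Proposition~\ref{prop:terminal_set} then applies with $\alpha$ chosen for the tightened sets.

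For the error part, I argue by induction. The base case is the hypothesis $e_0\in\SZ$. For the step, assume $e_k\in\SZ$. By the nominal part, $\bar x_k\in\SX_{\rm tight}$ and $\bar u_k\in\SU_{\rm tight}$, so \ref{A:tube}(a) applied to \eqref{eq:e_dynamics} gives
\[
e_{k+1}\in\{A_{\rm cl}e_k\}\oplus\SE\oplus\{w_k\}\subseteq A_{\rm cl}\SZ\oplus\SE\oplus\SW\subseteq\SZ,
\]
where the last inclusion is \ref{A:tube}(b). This proves $e_k\in\SZ$ for all $k\in\BBN$.

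To conclude, write $x_k=\bar x_k+e_k$ with $\bar x_k\in\SX\ominus\SZ$ and $e_k\in\SZ$. The definition of the Pontryagin difference gives $x_k\in\SX$. Likewise, $u_k=\bar u_k+K_\rmf e_k$ with $\bar u_k\in\SU\ominus(K_\rmf\SZ)$ and $K_\rmf e_k\in K_\rmf\SZ$, so $u_k\in\SU$.

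I expect the main obstacle to be the nominal feasibility step. The paper's timing convention is a one-step delay, in which $\bar u_{k+1}$ is computed from $(\bar x_k,\bar u_k)$. I must check that the nominal controller produces $\bar u_k\in\SU_{\rm tight}$ at every $k$, including $k=0$, and not only the predicted controls. I would handle this by requiring the given nominal $\bar u_0$ to lie in $\SU_{\rm tight}$, which I read as part of ``feasible at $k=0$,'' that is, $(\bar x_0,\bar u_0)$ belongs to the tightened $\SF_\ell$. Thereafter each applied nominal control $\bar u_{k+1}=u_{k,1}$ is a feasible predicted control and so lies in $\SU_{\rm tight}$. The state $\bar x_{k+1}=\bar x_{k,1}$ is a feasible predicted state and so lies in $\SX_{\rm tight}$.
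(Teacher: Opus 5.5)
Your proposal is correct and follows the paper's proof: nominal recursive feasibility from Theorem~\ref{thm:feasibility} on the tightened sets, induction on $e_k$ via \ref{A:tube}(a)--(b), and the Pontryagin-difference conclusion. One small correction to your last paragraph: membership of $(\bar x_0,\bar u_0)$ in the tightened $\SF_\ell$ does not constrain the current pair (see \eqref{eq:Fl_def}), so to cover $k=0$ you must read both $\bar u_0\in\SU_{\rm tight}$ and $\bar x_0\in\SX_{\rm tight}$ into ``feasible at $k=0$,'' which is what the paper's proof assumes implicitly.
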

	\begin{proof}
		The argument is that of tube-based MPC \cite[Sec.~3]{mayne2011tube}, with \ref{A:tube} supplying the invariance that the nonlinear flow would otherwise not provide.
		We show by induction that $e_k\in\SZ$ for all $k\in\BBN$, the base case holding by hypothesis.
		By Theorem~\ref{thm:feasibility} applied to the nominal system with constraints \eqref{eq:tightened}, nominal feasibility at $k=0$ implies $\bar x_k\in\SX_{\rm tight}$ and $\bar u_k\in\SU_{\rm tight}$ for all $k\in\BBN$, so that \eqref{eq:e_dynamics}, \eqref{eq:mismatch}, and $w_k\in\SW$ give
		\begin{equation*}
			e_{k+1}\in \{A_{\rm cl}e_k\}\oplus\SE\oplus\SW\subseteq A_{\rm cl}\SZ\oplus\SE\oplus\SW\subseteq\SZ,
		\end{equation*}
		the last inclusion by \ref{A:tube}(b).
		The constraint satisfaction now follows from \eqref{eq:tightened}, since $x_k=\bar x_k+e_k\in(\SX\ominus\SZ)\oplus\SZ\subseteq\SX$ and, similarly, $u_k=\bar u_k+K_\rmf e_k\in(\SU\ominus(K_\rmf\SZ))\oplus(K_\rmf\SZ)\subseteq\SU$.
	\end{proof}
	
	All of the foregoing presumes that $x_k$ is measured.
	The BOCF realization of Section~\ref{sec:OF} removes that presumption without a separation argument, because its reconstruction error is identically zero.
	
	\subsection{Asymptotic Stability under Output Feedback}
	\label{sec:OF_stability}
	
	\begin{proposition}\label{prop:bocf_stability}\rm
		Consider the input-output system \eqref{eq:yyy} with BOCF realization \eqref{eq:xk1_OF}--\eqref{eq:eta2}, controlled by iSCD-MPC \eqref{eq:fSCDC}--\eqref{eq:applied_control} applied to the BOCF state $x_k$ given by \eqref{eq:eta1} and \eqref{eq:eta2}.
		If the hypotheses of Theorem~\ref{thm:stability} (respectively, Theorem~\ref{thm:relaxed_lyapunov}) hold for the BOCF realization, then, for all initial extended states $z_0\in\SF_\ell$ (respectively, all $z_0$ such that $U_{0|\rho_0}$ is feasible for ${\rm OCP}_0$), $\lim_{k\to\infty}y_k= 0$.
	\end{proposition}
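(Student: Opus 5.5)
The argument reduces Proposition~\ref{prop:bocf_stability} to the state-feedback results. The BOCF reconstruction \eqref{eq:eta1}--\eqref{eq:eta2} is exact, so the controller acts on the true state of an SCDC system. Convergence of that state then forces the output to zero through the linear output map \eqref{eq:yk_OF}.

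\textbf{Step 1: the closed loop is a state-feedback loop.} First, I would verify that the sequence $x_k$ defined by \eqref{eq:eta1}--\eqref{eq:eta2} from past inputs and outputs satisfies \eqref{eq:xk1_OF} exactly. Concretely, I would substitute \eqref{eq:yyy} with $k$ replaced by $k+1$ into the first block row, and shift the sums in \eqref{eq:eta2} for the remaining rows. Coefficients frozen at time $k$ are rewritten as coefficients frozen at time $k+1$, which is the indexing used in \eqref{eq:BC_BOCF}. Since $F_{\tau,k+1}$ and $G_{\tau,k+1}$ are functions of $(y_k,\ldots,y_{k-n+1},u_k,\ldots,u_{k-n+1})$, and these data are recoverable from $(x_k,u_k)$, the map $A(x_k,u_k)$ is well defined. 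Hence $x_{k+1}=A(x_k,u_k)x_k+B(x_k,u_k)u_k$, which is the form \eqref{eq:fSCDC}. The controller computes $x_k$ from measured data with zero error, so the closed loop formed by iSCD-MPC and the BOCF realization coincides with the state-feedback closed loop \eqref{eq:closed_loop} for this system.

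\textbf{Step 2: invoke the stability theorems.} By hypothesis, the assumptions of Theorem~\ref{thm:stability} (respectively, Theorem~\ref{thm:relaxed_lyapunov}) hold for the BOCF realization. Under the stated conditions on $z_0$, those theorems give $\lim_{k\to\infty}z_k=0$, and hence $x_k\to0$.

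\textbf{Step 3: conclude for the output.} By \eqref{eq:yk_OF}, $\|y_k\|=\|Cx_k\|\le\|x_k\|$ because $\|C\|=1$. Therefore $y_k\to0$.

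\textbf{Main obstacle.} The delicate point is Step 1. It requires checking that the time indexing in \eqref{eq:eta2} and \eqref{eq:BC_BOCF} is consistent. It also requires checking that the coefficients $F_{\tau,k+1}$ and $G_{\tau,k+1}$ are genuinely functions of $(x_k,u_k)$, so that the realization is a time-invariant SCDC system to which the earlier theorems apply. I would handle this by noting that the hypothesis ``the hypotheses hold for the BOCF realization'' already presumes this SCDC structure. Deadbeat exactness is then the only ingredient that needs proof, and it follows from the algebraic inversion argument in Section~\ref{sec:OF}. No separation argument is needed, because the estimation error is identically zero.
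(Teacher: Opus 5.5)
Your proposal follows the paper's proof step for step: exact (deadbeat) BOCF reconstruction identifies the closed loop with the state-feedback loop on \eqref{eq:xk1_OF}, Theorem~\ref{thm:stability} (respectively, Theorem~\ref{thm:relaxed_lyapunov}) then gives $z_k\to0$, and $\|C\|=1$ in \eqref{eq:yk_OF} transfers this to $y_k$. The paper does not attempt your explicit Step~1 check; it simply takes exactness and the SCDC structure from Section~\ref{sec:OF} and the hypothesis, and your fallback to that stance is the right move, because the check itself fails once $F_{\tau,k}$ and $G_{\tau,k}$ genuinely vary: the index shift is not an identity (substituting \eqref{eq:yyy} at $k+1$ into the first block row needs time-$(k+1)$ coefficients, while \eqref{eq:eta2} supplies time-$k$ ones), and the $np+nm$ lagged data entering $F_{\tau,k+1},G_{\tau,k+1}$ are not in general recoverable from the $np+m$ entries of $(x_k,u_k)$.
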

	\begin{proof}
		Since the BOCF state \eqref{eq:eta1}--\eqref{eq:eta2} is an exact function of the past $n$ outputs and inputs, the state used by the controller equals the state of \eqref{eq:xk1_OF} with zero reconstruction error.
		Hence, Theorem~\ref{thm:stability} (respectively, Theorem~\ref{thm:relaxed_lyapunov}) applied to \eqref{eq:xk1_OF} yields $\lim_{k\to\infty}z_k= 0$ for $z_k=\matl x_k^\rmT&u_k^\rmT\matr^\rmT$, which implies $\lim_{k\to\infty}x_k= 0$.
		Since, in addition, \eqref{eq:BC_BOCF} implies $\|C\|=1$, it follows from \eqref{eq:yk_OF} that $\|y_k\|\to0$ as $k\to \infty$.
	\end{proof}
	
	\begin{remark}\rm
		Because the BOCF reconstruction error is identically zero, the guarantees of Theorems~\ref{thm:feasibility}--\ref{thm:relaxed_lyapunov} transfer to output feedback without a separation argument; this is in contrast to observer-based nonlinear MPC, where estimation error generally degrades the feasibility and stability margins.
	\end{remark}
	
	\section{Computational Complexity Analysis}
	\label{sec:complexity}
	
	This section compares the per-step computational cost of iSCD-MPC with that of iLQR, SQP, and LPV-MPC.
	We first fix the cost model, so that the comparison itself can be stated as a single result.
	
	Throughout, $n$, $m$, and $\ell$ are the state dimension, the control dimension, and the horizon length, and $\varepsilon\in(0,\infty)$ is the tolerance to which the inner optimization is solved.
	Costs are counted in floating-point operations under the following conventions, which are standard for interior-point implementations \cite{rao1998application,nocedal2006numerical}.
	
	\begin{enumerate}[label=({C\arabic*}),itemindent=5pt]
		\item \label{C:dense}
		Solving one quadratic program in $N$ decision variables by an interior-point method costs $O(N^3)$ when the Karush-Kuhn-Tucker matrix is treated as dense, and $O(\ell\,(n+m)^3)$ when that matrix is block banded with $\ell$ diagonal blocks of order $n+m$ and is factored by the corresponding Riccati recursion \cite{rao1998application}.
		In both cases the number of interior-point iterations is bounded by a constant independent of $\ell$, $n$, and $m$, and is absorbed into the stated cost.
		\item \label{C:ipm}
		The number of outer iterations required to reach tolerance $\varepsilon$, namely the SCDC iterations \eqref{eq:rhokk} for iSCD-MPC and the Newton-type iterations for iLQR and SQP, is $O(\log(1/\varepsilon))$.
		\item \label{C:jac}
		Let $C_f$ denote the cost of one evaluation of $f$.
		Evaluating $J_f$ by finite differences requires one evaluation of $f$ per column, hence $n+m$ evaluations, and costs $O((n+m)C_f)$ per stage.
		For the SCDC form \eqref{eq:fSCDC}, forming $A(x,u)x$ and $B(x,u)u$ gives $C_f=O(n^2+nm)=O(n(n+m))$, so that a finite-difference Jacobian costs $O(n(n+m)^2)$ per stage.
	\end{enumerate}
	
	For all $k\in\BBN$, let $C_{\rm iSCD}$, $C_{\rm iLQR}$, $C_{\rm SQP}$, and $C_{\rm LPV}$ denote the number of operations performed by the corresponding method at step $k$, and let $C_{\rm Jac}$ denote the operations spent on Jacobian evaluation and line search by iLQR at step $k$.
	The methods are those of Section~\ref{sec:setup}: iSCD-MPC solves the sequence of quadratic programs \eqref{eq:QP_cost}--\eqref{eq:QP_box} with the block-banded structure of \eqref{eq:Hessian} retained; iLQR \cite{TodorovLi2005} performs a Jacobian linearization, a backward Riccati pass, and a line-searched forward rollout per iteration; dense SQP \cite{nocedal2006numerical,diehl2002real} treats the Karush-Kuhn-Tucker system in the $N=O(\ell(n+m))$ state and control variables as dense, without exploiting block-banded sparsity, or equivalently condenses the states out at cubic horizon cost, per outer iteration; and LPV-MPC \cite{cisneros2018lpv,morato2020model} freezes $(A_{k,j|i},B_{k,j|i})\equiv(\bar A_k,\bar B_k)$ for all $j\in\{1,\ldots,\ell-1\}$ and solves one quadratic program per step.
	
	\begin{proposition}\label{prop:complexity}\rm
		Assume that \ref{C:dense}--\ref{C:jac} hold and that, for all $k\in\BBN$, the number of iSCD-MPC iterations satisfies the bound \eqref{eq:iter_bound} of Proposition~\ref{prop:iteration_bound}.
		Then, for all $k\in\BBN$, the following statements hold:
		\begin{enumerate}[label=({\it \roman*}),itemindent=1pt]
			\item\label{cx:iscd} $C_{\rm iSCD} = O\big(\ell(n+m)^3\log(1/\varepsilon)\big)$.
			\item\label{cx:ilqr} $C_{\rm iLQR} = O\big(\ell(n+m)^3\log(1/\varepsilon)\big)+C_{\rm Jac}$.
			\item\label{cx:sqp} $C_{\rm SQP} = O\big(\ell^3(n+m)^3\log(1/\varepsilon)\big)$.
			\item\label{cx:lpv} $C_{\rm LPV} = O\big(\ell(n+m)^3\big)$.
		\end{enumerate}
	\end{proposition}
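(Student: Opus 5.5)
The plan is to count operations per step for each method as (number of outer iterations) times (cost per outer iteration). The cost per iteration is assembled from conventions \ref{C:dense}--\ref{C:jac}. Each of the four statements is handled separately, using the same two ingredients.

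For \ref{cx:iscd}, Proposition~\ref{prop:iteration_bound} gives the iteration count. Since $\gamma\in(0,1)$ is fixed, \eqref{eq:iter_bound} gives $\rho_k\le 3+\ln(\|U_{k|2}-U_{k|1}\|/\varepsilon)/\ln(1/\gamma)$. Hence $\rho_k=O(\log(1/\varepsilon))$, with the constants $\gamma$ and $\|U_{k|2}-U_{k|1}\|\le 2\delta_U$ absorbed; the latter holds because both iterates lie in $\SB$. This is consistent with \ref{C:ipm}.

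Each SCDC iteration costs the following. The rollout \eqref{eq:xkj+1i} and the evaluation of \eqref{eq:AB_SCDC} cost $\ell$ stages at $O(n(n+m))$ each, so $O(\ell n(n+m))$ in total. One QP with the block-banded structure retained costs $O(\ell(n+m)^3)$ by \ref{C:dense}. The QP term dominates, and no Jacobian term appears. Multiplying by $\rho_k$ gives \ref{cx:iscd}.

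For \ref{cx:ilqr}, each iLQR iteration consists of a backward Riccati pass over $\ell$ stages with $O((n+m)^3)$ work per stage, plus a forward rollout. This gives $O(\ell(n+m)^3)$ per iteration. By \ref{C:ipm} there are $O(\log(1/\varepsilon))$ iterations. Jacobian evaluation and the line search are collected separately into $C_{\rm Jac}$ by definition. By \ref{C:jac}, $C_{\rm Jac}=O(\ell n(n+m)^2\log(1/\varepsilon))$ times the line-search multiplicity. I would state it additively rather than absorb it, matching the statement.

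For \ref{cx:sqp}, the KKT system has $N=O(\ell(n+m))$ variables and is treated as dense. By \ref{C:dense}, each outer iteration therefore costs $O(N^3)=O(\ell^3(n+m)^3)$. This dominates the $O(\ell n(n+m)^2)$ Jacobian cost per iteration, so the total is $O(\ell^3(n+m)^3\log(1/\varepsilon))$ by \ref{C:ipm}. The same bound holds under condensing, since forming and factoring the condensed Hessian costs cubic in $\ell m$.

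For \ref{cx:lpv}, the method evaluates $(\bar A_k,\bar B_k)$ once at cost $O(n(n+m))$ and solves a single banded QP. This gives $O(\ell(n+m)^3)$ with no $\log(1/\varepsilon)$ factor.

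The main obstacle is \ref{cx:iscd}: justifying that $\rho_k=O(\log(1/\varepsilon))$ uniformly in $k$. This needs the hypotheses of Proposition~\ref{prop:contraction} to hold at every step, i.e., a $k$-independent $\gamma$ and $U_{k|1}\in\SB$. I would rely on the stated assumption that \eqref{eq:iter_bound} holds for all $k$. I would also note that $\ln\|U_{k|2}-U_{k|1}\|$ is bounded above by $\ln(2\delta_U)$, and that if it is small the max with $0$ only helps. Finally, the cap $\rho$ gives an alternative bound independent of $\varepsilon$.
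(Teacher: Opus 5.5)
Your proposal is correct and follows the paper's proof: per-step cost as (per-iteration cost)$\times$(iteration count), with \ref{C:dense} giving the banded $O(\ell(n+m)^3)$ solve for iSCD-MPC, iLQR, and LPV-MPC and the dense $O(\ell^3(n+m)^3)$ solve for SQP, the rollout and Jacobian costs of \ref{C:jac} either dominated or set aside in $C_{\rm Jac}$, and \ref{C:ipm} with \eqref{eq:iter_bound} supplying the $O(\log(1/\varepsilon))$ factor. Your explicit derivation of $\rho_k=O(\log(1/\varepsilon))$ from $\|U_{k|2}-U_{k|1}\|\le 2\delta_U$ is a reasonable elaboration of the paper's one-line citation, but your displayed bound should read $\rho_k\le\max\{2,\,3+\ln(\|U_{k|2}-U_{k|1}\|/\varepsilon)/\ln(1/\gamma)\}$: without the maximum it fails when $\|U_{k|2}-U_{k|1}\|<\gamma\varepsilon$, where in fact $\rho_k=2$.
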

	\begin{proof}
		To prove \ref{cx:iscd}, note that each iSCD-MPC iteration performs the rollout \eqref{eq:xkj+1i} and evaluates \eqref{eq:AB_SCDC} at each of the $\ell-1$ stages, which by \ref{C:jac} costs $O(\ell\,C_f)=O(\ell\,n(n+m))$, and then solves \eqref{eq:QP_cost}--\eqref{eq:QP_box}.
		The equality constraints \eqref{eq:QP_dyn} couple only consecutive stages, so the Karush-Kuhn-Tucker matrix is block banded with $\ell$ blocks of order $n+m$, and \ref{C:dense} gives $O(\ell(n+m)^3)$ for the solve.
		Since $n(n+m)\le(n+m)^3$, the solve dominates the coefficient evaluation, and each iteration costs $O(\ell(n+m)^3)$.
		By \eqref{eq:iter_bound} and \ref{C:ipm}, the number of iterations is $O(\log(1/\varepsilon))$, which confirms \ref{cx:iscd}.
		Statement \ref{cx:ilqr} follows from the same accounting, because the backward Riccati pass of iLQR factors the same block-banded structure at cost $O(\ell(n+m)^3)$ per iteration; the Jacobian evaluations and the line search contribute the separate term $C_{\rm Jac}$, which by \ref{C:jac} is $O(\ell(n+m)C_f)$ per iteration when finite differences are used.
		Summing over the $O(\log(1/\varepsilon))$ iterations of \ref{C:ipm} gives the per-step total $C_{\rm Jac}=O(\ell(n+m)C_f\log(1/\varepsilon))$, which for the SCDC form \eqref{eq:fSCDC} is $O(\ell\,n(n+m)^2\log(1/\varepsilon))$; this is the quantity denoted $C_{\rm Jac}$ in \ref{cx:ilqr} and in Table~\ref{tab:complexity}.
		To prove \ref{cx:sqp}, note that general-purpose dense SQP treats the Karush-Kuhn-Tucker matrix in the $N=O(\ell(n+m))$ state and control variables as dense without exploiting the block-banded sparsity, so that \ref{C:dense} gives $O(\ell^3(n+m)^3)$ for each subproblem; condensing the states out instead leaves a dense quadratic program in $O(\ell m)$ variables whose formation and dense solution scale as $O(\ell^3)$ as well.
		Multiplying by the $O(\log(1/\varepsilon))$ outer iterations of \ref{C:ipm} confirms \ref{cx:sqp}.
		The Jacobian and Hessian-approximation costs, $O(\ell(n+m)C_f)$ and $O(\ell^2(n+m)^2)$ per outer iteration, are dominated by the subproblem solve, since $C_f\le(n+m)^2$ for \eqref{eq:fSCDC} and $\ell\ge2$.
		Statement \ref{cx:lpv} follows from the accounting for \ref{cx:iscd} with the iteration count equal to one, since LPV-MPC freezes the coefficients and therefore solves a single quadratic program of the same block-banded structure per step.
	\end{proof}
	
	Two ratios follow immediately from Proposition~\ref{prop:complexity} and are the quantities measured in Section~\ref{sec:scaling}.
	Dividing \ref{cx:sqp} by \ref{cx:iscd} gives
	\begin{equation}
		\frac{C_{\rm SQP}}{C_{\rm iSCD}} = O(\ell^2),\label{eq:ratio_SQP}
	\end{equation}
	so that the per-step cost of iSCD-MPC scales two orders in $\ell$ below that of dense SQP, and dividing \ref{cx:iscd} by \ref{cx:lpv} gives
	\begin{equation}
		\frac{C_{\rm iSCD}}{C_{\rm LPV}} = O(\log(1/\varepsilon)),\label{eq:ratio_LPV}
	\end{equation}
	so that iSCD-MPC pays only a logarithmic factor for iterating the coefficients to tolerance $\varepsilon$.
	Table~\ref{tab:complexity} collects the four costs.
	
	\begin{table}[t]
		\centering
		\caption{Per-step computational cost of the four methods, from Proposition~\ref{prop:complexity}, where $C_{\rm Jac}$ denotes Jacobian-evaluation and line-search overhead and SQP refers to dense (condensed) implementations.}
		\label{tab:complexity}
		\small
		\begin{tabular}{lc}
			\toprule
			Method & Per-step cost\\
			\midrule
			iSCD-MPC & $O(\ell(n+m)^3\log(1/\varepsilon))$\\
			iLQR & $O(\ell(n+m)^3\log(1/\varepsilon))+C_{\rm Jac}$ \\
			SQP (dense) & $O(\ell^3(n+m)^3\log(1/\varepsilon))$\\
			LPV-MPC & $O(\ell(n+m)^3)$\\
			\bottomrule
		\end{tabular}
	\end{table}
	
	Four consequences follow.
	First, \ref{cx:iscd} and \ref{cx:ilqr} share a leading term, so iSCD-MPC attains the scaling of iLQR while evaluating no Jacobians.
	The size of that advantage depends on $C_f$, and it is worth being precise about the regime in which it is asymptotic rather than merely a constant factor.
	Comparing $C_{\rm Jac}=O(\ell(n+m)C_f\log(1/\varepsilon))$ with the leading term $O(\ell(n+m)^3\log(1/\varepsilon))$ shows that the Jacobian overhead dominates if and only if $C_f$ grows faster than $(n+m)^2$.
	For the SCDC form \eqref{eq:fSCDC} itself, $C_f=O(n(n+m))$, so the two are of the same order and the ratio is $n/(n+m)\le1$; on such systems the benefit of dispensing with $J_f$ is a constant factor together with the absence of truncation error, not a change of order.
	The overhead becomes asymptotically dominant when a single evaluation of $f$ is expensive relative to $(n+m)^2$, as occurs when $f$ is defined by an embedded simulation, a table lookup, or an implicit solve, and it becomes decisive when the derivatives of $f$ are unavailable in closed form, in which case iLQR and SQP cannot be applied without finite differencing at all.
	Second, \ref{cx:sqp} is a statement about dense implementations only.
	Structure-exploiting SQP based on sparse Riccati-factored solvers \cite{frison2020hpipm,verschueren2022acados} recovers the $O(\ell(n+m)^3)$ per-quadratic-program scaling, and relative to such implementations the difference is the elimination of Jacobian and sensitivity computation rather than the scaling itself; the comparison \eqref{eq:ratio_SQP}, and the measurements reported in Section~\ref{sec:examples}, refer to general-purpose dense implementations, as exemplified by MATLAB {\tt fmincon}.
	Third, \eqref{eq:ratio_SQP} and \eqref{eq:ratio_LPV} are statements about a fixed system as $\ell$ alone varies, and are tested as such in Section~\ref{sec:scaling}, not by comparing benchmarks of differing dimension against one another.
	Finally, we note that \ref{cx:iscd} presumes the banded structure is retained; condensing the states out of \eqref{eq:QP_cost}--\eqref{eq:QP_box}, as in the explicit map \eqref{eq:Hmap}, forfeits it and yields the cubic scaling of \ref{cx:sqp}, a distinction that Section~\ref{sec:scaling} measures directly.
	
	\section{Numerical Study}
	\label{sec:examples}
	
	This section presents a numerical study designed around the theoretical results of Sections~\ref{sec:theory} and \ref{sec:complexity}.
	The study comprises three benchmarks, each of which isolates a distinct facet of the analysis.
	Example~\ref{ex:quadrotor} considers stabilization of a planar quadrotor with thrust and torque saturation; this benchmark provides the head-to-head comparison of iSCD-MPC with iLQR, SQP, and LPV-MPC and empirically validates the contraction estimate (Proposition~\ref{prop:contraction}), the iteration bound (Proposition~\ref{prop:iteration_bound}), the cost convergence (Proposition~\ref{prop:cost_decrease}), and the boundedness of the adapted terminal penalty (Proposition~\ref{prop:Pk_bounded}).
	Example~\ref{ex:brockett} considers the nonholonomic integrator, which arises as the chained-form kinematics of a differential-drive mobile robot and whose linearization at every equilibrium violates the stabilizability requirement of frozen-coefficient control (i.e., Brockett's obstruction \cite{brockett1983asymptotic}); this benchmark demonstrates a structural failure of LPV-MPC that iSCD-MPC avoids, and validates the horizon monotonicity of the feasible set (Proposition~\ref{prop:doa_expansion}).
	Example~\ref{ex:triple_int} considers output-feedback control of a nonminimum-phase sampled-data plant with output-dependent actuator effectiveness and asymmetric control saturation, validating the BOCF architecture of Section~\ref{sec:OF} (Proposition~\ref{prop:bocf_stability}) and separating iSCD-MPC from LPV-MPC through a state-dependent input matrix.
	All simulation code is implemented in MATLAB and is available from the author.
	
	\subsection{Setup, Saturation Handling, and Comparison Protocol}
	\label{sec:setup}
	
	{\it Sampled-data simulation.}
	All benchmarks are simulated as sampled-data systems: MATLAB {\tt ode45} (relative and absolute tolerances $10^{-8}$) integrates the continuous-time dynamics under the zero-order-hold control $u(t)=u_k$ for $t\in[k\Ts,(k+1)\Ts)$, where $k\in\BBN$, $\Ts$ is the sampling time, and the SCDC model used for prediction is the Euler discretization of the continuous-time dynamics with step $\Ts$.
	For a nonlinear plant, no finite-order discretization reproduces the sampled-data flow exactly, so a mismatch between the prediction model and the simulated plant is intrinsic to sampled-data nonlinear control rather than a choice made here; the reported closed-loop performance therefore includes the effect of that mismatch.
	The QP \eqref{eq:QP_cost}--\eqref{eq:QP_box} is solved by MATLAB {\tt quadprog}.
	
	{\it Saturation as a control-dependent coefficient.}
	Each benchmark includes a componentwise magnitude saturation $\sigma\colon\BBR^m\to\BBR^m$, where, for all $s\in\{1,\ldots,m\}$, $\sigma(u)_{(s)}\triangleq \max\{\underline u_{(s)},\min\{u_{(s)},\overline u_{(s)}\}\}$ and $\underline u_{(s)}<0<\overline u_{(s)}$.
	Rather than enforce the saturation as a hard QP constraint, we exploit the SCDC structure \eqref{eq:fSCDC} by writing
	\begin{equation}
		\sigma(u) = D(u)\,u,\qquad D(u)\triangleq {\rm diag}\!\(\frac{\sigma(u)_{(1)}}{u_{(1)}},\ldots,\frac{\sigma(u)_{(m)}}{u_{(m)}}\), \label{eq:sat_SCDC}
	\end{equation}
	where, for all $s\in\{1,\ldots,m\}$, each diagonal entry is defined to be $1$ at $u_{(s)}=0$, and by absorbing $D(u)$ into the control-dependent input matrix, that is, replacing $B(x,u)$ by $B(x,\sigma(u))D(u)$.
	Since $\sigma$ is the identity in a neighborhood of the origin, the factorization \eqref{eq:sat_SCDC} leaves the linearization $(A_0,B_0)$ unchanged.
	The map $D$ is globally Lipschitz continuous but not differentiable on the saturation boundary; the differentiability requirement of \ref{A:lipschitz} can be met exactly by replacing $\sigma$ with a smooth surrogate (e.g., a scaled hyperbolic tangent), at the cost of conservatism near the limits.
	The formulation \eqref{eq:sat_SCDC} preserves the structure of \eqref{eq:Hessian} and guarantees that the applied control respects the actuator limits regardless of the QP solution. Furthermore, numerical simulations suggest that using \eqref{eq:sat_SCDC} is more reliable than direct enforcement of control bounds when the QP is warm started far from the constrained optimum.
	
	{\it Comparison methods.}
	iSCD-MPC is compared against three methods whose per-step complexities are derived in Section~\ref{sec:complexity}:
	\begin{enumerate}[label=({\it \roman*}),itemindent=1pt]
		\item {\it iLQR} \cite{TodorovLi2005,tomizukaiLQR}, with analytical Jacobians evaluated along the rolled-out trajectory, regularized backward pass, and backtracking line search in the forward pass;
		\item {\it SQP}\cite{nocedal2006numerical,diehl2002real}, implemented by MATLAB {\tt fmincon} using its sequential quadratic programming algorithm in the condensed control variables with analytical cost gradients, representing the dense implementation analyzed in \ref{cx:sqp} of Proposition~\ref{prop:complexity};
		\item {\it LPV-MPC} \cite{cisneros2018lpv,morato2020model}, which freezes $(\bar A_k,\bar B_k)=(A(x_{k,1},u_k),B(x_{k,1},u_k))$ across the horizon and solves a single QP per step.
	\end{enumerate}
	All methods use identical horizon $\ell$, weightings $Q$ and $R$, terminal penalty, sampling period $\Ts$, saturation, and warm-starting shift, so that performance differences are attributable to the algorithms rather than to tuning.
	
	{\it Performance metrics.}
	Three metrics are reported:
	the closed-loop cost
	\begin{equation}
		\SJ_N\triangleq \sum_{k=0}^{N-1}\(x_k^\rmT Q x_k+u_k^\rmT R u_k\),\label{eq:integrated_cost}
	\end{equation}
	accumulated over the $N$-step simulation interval;
	the average per-step computation time $\bar t_{\rm step}$ in milliseconds, which tests the analytical ratios \eqref{eq:ratio_SQP} and \eqref{eq:ratio_LPV};
	and, for iSCD-MPC, the average iteration count $\bar\rho_k$, which tests the bound \eqref{eq:iter_bound}.
	
	{\it Monte Carlo protocol.}
	Each metric is averaged over $M=20$ trials, in which for all $s\in\{1,\ldots,M\}$, the initial condition of trial $s$ is $x_{0,s}=x_0+\zeta_s$, where the components of $\zeta_s\in\BBR^n$ are drawn independently from the uniform distribution on $[-0.05\|x_0\|,0.05\|x_0\|]$.
	Sample means and sample standard deviations are reported.
	Computation times are measured on a 3.4~GHz Intel Core i7 with 64~GB RAM running MATLAB R2023b.
	All quadratic programs are solved by {\tt quadprog} with the interior-point-convex algorithm, optimality and step tolerances $10^{-10}$ and $10^{-12}$, and an iteration limit of 400; SQP is {\tt fmincon} with the {\tt sqp} algorithm and default tolerances.
	Except in Example~\ref{ex:brockett}, where the exact-penalty relaxation of the terminal equality constraint is used, the terminal penalty is $P$ given by \eqref{eq:DARE}, computed as {\tt idare}$(A_0,B_0,2Q,2R)$, and the terminal control law is $\kappa_\rmf(x)=K_\rmf x$ with $K_\rmf$ the corresponding gain.
	Continuous-time plants are integrated between samples by {\tt ode45}, whereas Example~\ref{ex:triple_int} is a discrete-time input-output system and is simulated by its defining recursion.
	Reported per-step times are medians over the simulation interval, which are insensitive to the first-call overhead of the solver.
	
	\subsection{Planar Quadrotor with Thrust and Torque Saturation}
	\label{sec:ex_quadrotor}
	
	\begin{example}
		\label{ex:quadrotor}
		{\it Planar quadrotor.}
		\rm
		Consider the planar quadrotor shown in Figure~\ref{fig:quad_schematic}, whose states are the horizontal position $p_\rmx$, the altitude $p_\rmz$, the pitch angle $\theta$, and the corresponding rates $\dot p_\rmx$, $\dot p_\rmz$, $\dot\theta$.
		The equations of motion are
		\begin{gather}
			m_{\rm q}\ddot p_\rmx = -T_\rmc\sin\theta, \qquad
			m_{\rm q}\ddot p_\rmz = T_\rmc\cos\theta - m_{\rm q}g, \label{eq:quad_trans}\\
			\Iq\ddot\theta = M_\rmp,\label{eq:quad_rot}
		\end{gather}
		where $m_{\rm q}$ is the mass, $\Iq$ is the pitch moment of inertia, $g$ is the gravitational acceleration, $T_\rmc\in[0,T_{\rm c,max}]$ is the total thrust, and $M_\rmp\in[-M_{\rm p,max},M_{\rm p,max}]$ is the pitch moment.
		The parameters $m_{\rm q}=0.033$ kg, $\Iq=1.96\times10^{-5}$ kg\,m$^2$, $T_{\rm c,max}=0.60$ N, and $M_{\rm p,max}=2\times10^{-3}$ N\,m correspond to a palm-sized quadrotor of the Crazyflie class \cite{giernacki2017crazyflie}, for which the hover thrust $m_{\rm q}g\approx0.32$ N is roughly half of $T_{\rm c,max}$.
		
		Define the state $x\triangleq \matl p_\rmx&p_\rmz&\theta&\dot p_\rmx&\dot p_\rmz&\dot\theta\matr^\rmT$ and the shifted control $u\triangleq\matl T_\rmc- m_{\rm q}g&M_\rmp\matr^\rmT$, so that the origin corresponds to hover and the thrust limits become the asymmetric bounds $u_{(1)}\in[-m_{\rm q}g,\,T_{\rm c,max}-m_{\rm q}g]$.
		Applying the Euler discretization with sampling period $\Ts$ and the saturation factorization \eqref{eq:sat_SCDC} to \eqref{eq:quad_trans} and \eqref{eq:quad_rot} yields the SCDC representation \eqref{eq:fSCDC} with
		\begin{align}
			A(x,u) &= I_6 + \Ts\matl 0_{3\times3} & I_3\\ \SG(x_{(3)}) & 0_{3\times3}\matr,\label{eq:Aquad}\\
			\SG(\theta)&\triangleq \matl 0&0&-g\,\dfrac{\sin\theta}{\theta}\\[4pt] 0&0&g\,\dfrac{\cos\theta-1}{\theta}\\[2pt] 0&0&0\matr,\nonumber\\
			B(x,u) &= \Ts\matl \multicolumn{2}{c}{0_{3\times2}}\\[3pt] -\dfrac{\sin x_{(3)}}{m_{\rm q}}&0\\[6pt] \dfrac{\cos x_{(3)}}{m_{\rm q}}&0\\[4pt] 0&\dfrac{1}{\Iq}\matr D(u),\label{eq:Bquad}
		\end{align}
		where $(\sin\theta)/\theta$ and $(\cos\theta-1)/\theta$ are extended continuously by their limits $1$ and $0$ at $\theta=0$, and $D$ is given by \eqref{eq:sat_SCDC}.
		The entries of \eqref{eq:Aquad} and \eqref{eq:Bquad} are bounded functions with bounded Lipschitz constants, so that \ref{A:lipschitz} and \ref{A:bounded} hold globally as discussed in Section~\ref{sec:setup}.
		The linearization at hover is $A_0=I_6+\Ts\matls 0_{3\times3}&I_3\\ \SG(0)&0_{3\times3}\matrs$ with $\SG(0)=\matls 0&0&-g\\0&0&0\\0&0&0\matrs$ and $B_0=\Ts\matls 0_{3\times2}\\ e_2/m_{\rm q}\ \ e_3/\Iq\matrs$, structured as in \eqref{eq:Bquad} with $D=I_2$, which is controllable, so that \ref{A:stabilizable} holds.

		\begin{figure}[t!]
			\centering
			\begin{tikzpicture}[scale=1.15, >=Stealth]
				
				% --- Inertial Reference Frame ---
				\draw[->, thick] (-2.6,-1.3) -- (-1.5,-1.3) node[below] { $p_\rmx$};
				\draw[->, thick] (-2.6,-1.3) -- (-2.6,-0.2) node[left] { $p_\rmz$};
				\fill (-2.6,-1.3) circle (1pt);
				
				% --- Gravity (Inertial Frame) ---
				\draw[->, thick] (0,-0.25) -- (0,-1.15) node[below] { $m_{\rm q}g$};
				
				% --- Inertial Vertical Reference Line ---
				\draw[dashed, thin, black!50] (0,0) -- (0,1.55);
				
				% --- Pitch Angle \theta (Inertial Vertical to Body Vertical) ---
				\draw[->, thick] (0,1.15) arc (90:112:1.15);
				\node at (101:1.35) { $\theta$};
				
				% --- Quadrotor Body Scope (Rotated by \theta = 22^\circ) ---
				\begin{scope}[rotate=22]
					
					% Main carbon-fiber airframe arm
					\draw[very thick, line cap=round] (-1.2,0) -- (1.2,0);
					
					% Fuselage / payload hub
					\draw[thick, fill=black!70, rounded corners=1pt] (-0.22,-0.14) -- (0.22,-0.14) -- (0.08,0.02) -- (-0.08,0.02) -- cycle;
					\fill (0,0) circle (1.5pt);
					
					% Motor masts
					\draw[thick] (-1.2,0) -- (-1.2,0.16);
					\draw[thick] (1.2,0) -- (1.2,0.16);
					
					% Propeller discs (shaded ellipses with center hub caps)
					\draw[thick, fill=black!10] (-1.2,0.18) ellipse (0.42 and 0.06);
					\fill (-1.2,0.18) circle (1pt);
					\draw[thick, fill=black!10] (1.2,0.18) ellipse (0.42 and 0.06);
					\fill (1.2,0.18) circle (1pt);
					
					% --- Thrust vector T_c (body +z axis) ---
					\draw[->, thick, blue!75!black] (0,0) -- (0,1.6) node[above left=-2pt] { $T_\rmc$};
					
					% --- Pitch moment M_p (concentric arc in right sector) ---
					\draw[->, thick, red!75!black] (20:0.65) arc (20:80:0.65) node[midway, above right=-2pt] { $M_\rmp$};
					
				\end{scope}
				
			\end{tikzpicture}
			\caption{Example~\ref{ex:quadrotor}. Planar quadrotor with total thrust $T_\rmc\in[0,T_{\rm c,max}]$ along the body axis and pitch torque $M_\rmp\in[-M_{\rm p,max},M_{\rm p,max}]$.}
			\label{fig:quad_schematic}
		\end{figure}
		The simulations use $\Ts=0.02$ s (i.e., 50-Hz update), $\ell=40$ (i.e., 0.8-s prediction window), $\rho=30$, $\varepsilon=10^{-4}$, $Q=\diag(20,20,4,0.5,0.5,0.02)$, $R=\diag(10,8000)$, and the initial condition $x_0=\matl 0.8&-0.5&0.70&0&0&0\matr^\rmT$, $u_0=0$, which corresponds to a $0.94$-m position offset with a $40^\circ$ initial pitch at rest.
		The terminal penalty is adapted online by \eqref{eq:Pk_update} with $P_0=P$ given by \eqref{eq:DARE}, thereby exercising Proposition~\ref{prop:Pk_bounded}, and the simulation interval is $5$ s.
		
		Figure~\ref{fig:quad_traj} shows the closed-loop trajectory and saturated control under iSCD-MPC: the quadrotor levels the initial $40^\circ$ pitch within approximately $0.3$ s using torque-limit and thrust-limit activity, and the position converges to hover in approximately $2.5$ s without constraint violation, illustrating Theorems~\ref{thm:feasibility} and \ref{thm:relaxed_lyapunov}.
		Figure~\ref{fig:contraction} reports $\|U_{k|i}-U_{k|i-1}\|$ versus $i$ at the cold start $k=0$, where $\|x_{k,1}\|=1.18$.
		The increments decay geometrically over the range in which Proposition~\ref{prop:contraction} applies, with a rate $\hat\gamma_0=0.354$ estimated by a least-squares fit to $\log\|U_{k|i}-U_{k|i-1}\|$; the fitted rate is shown as the dashed reference line.
		The decay follows the fitted rate over the entire range until the increments reach the double-precision floor near $10^{-13}$, beyond which they no longer decrease.
		Along the closed-loop trajectory, the warm start \eqref{eq:warm_start} places the initial iterate inside this fast-convergence neighborhood, so that the number of iterations required to meet the tolerance is small after the initial transient.
		Figure~\ref{fig:rho_history} reports the iteration count $\rho_k$ over the closed loop: $\rho_k$ equals $11$ at the first step, where the cold start \eqref{eq:u0_init} is used, and decreases within roughly $1.5$ s to a value of $2$; the mean is $\bar\rho_k=2.44$ over the whole interval and $2.34$ after the fifth step, and the cap $\rho=30$ is never reached, in agreement with the bound \eqref{eq:iter_bound} of Proposition~\ref{prop:iteration_bound}.
		Figure~\ref{fig:cost_convergence} shows $|\bar J_0(U_{0|i})-\bar J_0(U_{0|\rho_0})|$ versus $i$ at $k=0$.
		The magnitude decays at a geometric rate consistent with Proposition~\ref{prop:cost_decrease} over most of the range, with one pronounced dip near $i=7$ at which the iterate cost passes close to its limiting value before the decay resumes.
		Such a dip is admissible: Proposition~\ref{prop:cost_decrease} bounds $|\bar J_0(U_{0|i})-\bar J_0(U_{0,\star})|$ by a decaying envelope and does not assert that the magnitude decreases monotonically in $i$.
		Figure~\ref{fig:Pk_eigs} reports all six eigenvalues of $P_k$ versus the step index $k\in\{0,\ldots,N\}$; after a transient of approximately $50$ steps each eigenvalue is constant, and all of them remain within $[2.79\times10^{-2},\,9.68\times10^{2}]$, consistent with the lower bound $\underline p=\lambda_{\min}(Q)=2\times10^{-2}$ of Proposition~\ref{prop:Pk_bounded}.
		Table~\ref{tab:comparison} compares the four methods.
		On this benchmark the four methods reach the same closed-loop cost to within $0.4\%$: iSCD-MPC, iLQR, and SQP agree to four significant figures, and LPV-MPC is $0.4\%$ higher, since the quadrotor linearization at hover is stabilizable and the frozen coefficients remain accurate over the maneuver.
		The methods are separated instead by computation: iSCD-MPC is $85.1$ times faster than dense SQP per step and within a factor of $4.9$ of the single-QP LPV-MPC, while attaining the accuracy that LPV-MPC sacrifices on the harder benchmarks that follow.
		\exmark
	\end{example}
	
	\begin{figure}[t!]
		\centering
		
		{\centering \includegraphics[width=\columnwidth]{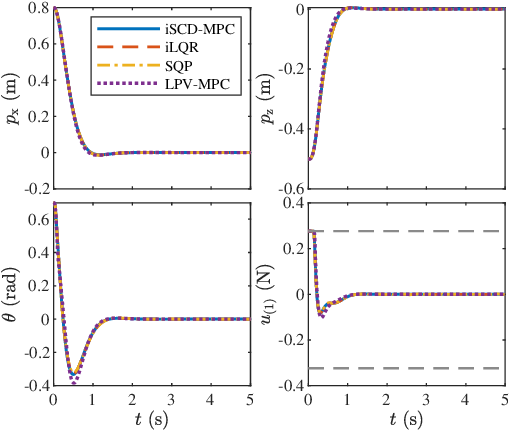}}
		\caption{Example~\ref{ex:quadrotor}. Closed-loop state trajectory and saturated controls $\sigma(u_k)$ for the planar quadrotor under iSCD-MPC with online terminal penalty adaptation \eqref{eq:Pk_update}.}
		\label{fig:quad_traj}
	\end{figure}
	
	\begin{figure}[t!]
		\centering
		
		{\centering \includegraphics[width=\columnwidth]{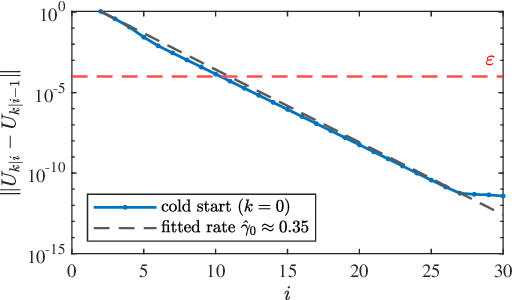}}
		\caption{Example~\ref{ex:quadrotor}. Empirical validation of Proposition~\ref{prop:contraction} at the cold start $k=0$: the increment $\|U_{k|i}-U_{k|i-1}\|$ decays geometrically at the fitted rate $\hat\gamma_0=0.354$ (dashed line), and flattens beyond $i\approx27$ where the increments reach the double-precision floor.}
		\label{fig:contraction}
	\end{figure}
	
	\begin{figure}[t!]
		\centering
		{\centering \includegraphics[width=\columnwidth]{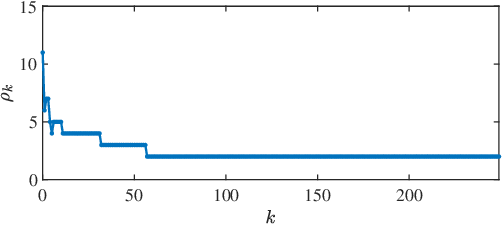}}
		\caption{Example~\ref{ex:quadrotor}. Empirical validation of Proposition~\ref{prop:iteration_bound}: the number of SCDC iterations $\rho_k$ versus the step index $k$. The count equals $11$ at the first step, where the cold start is used, and decreases to $2$ once the warm start \eqref{eq:warm_start} places the initial iterate near the fixed point; the mean is $\bar\rho_k=2.44$ over the interval and $2.34$ after the fifth step, and the cap $\rho=30$ is never reached.}
		\label{fig:rho_history}
	\end{figure}
	
	\begin{figure}[t!]
		\centering
		
		{\centering \includegraphics[width=\columnwidth]{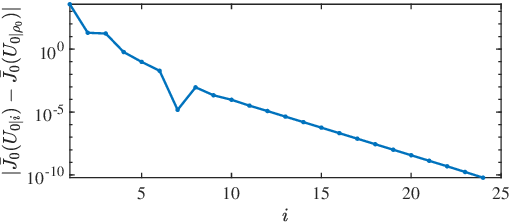}}
		\caption{Example~\ref{ex:quadrotor}. Empirical validation of Proposition~\ref{prop:cost_decrease}: magnitude of the rollout-cost error $|\bar J_0(U_{0|i})-\bar J_0(U_{0|\rho_0})|$ versus iteration $i$. The decay is geometric apart from the dip near $i=7$, at which the iterate cost passes close to its limiting value.}
		\label{fig:cost_convergence}
	\end{figure}
	
	\begin{figure}[t!]
		\centering
		
		{\centering \includegraphics[width=\columnwidth]{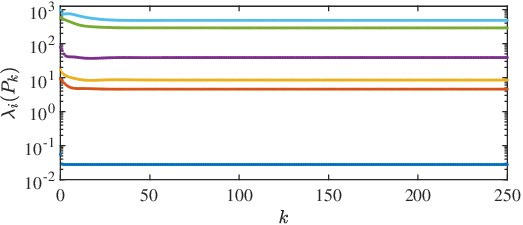}}
		\caption{Example~\ref{ex:quadrotor}. Empirical validation of Proposition~\ref{prop:Pk_bounded}: the eigenvalues of the adapted terminal penalty $P_k$ given by \eqref{eq:Pk_update} versus the step index $k$, which remain within $[\underline p,\bar p]$ and become constant after an initial transient.}
		\label{fig:Pk_eigs}
	\end{figure}
	
	\begin{table*}[t]
		\centering
		\caption{Benchmark comparison: closed-loop cost \eqref{eq:integrated_cost} normalized to iSCD-MPC (lower is better) and average per-step computation time $\bar t_{\rm step}$ in milliseconds, averaged over $M=20$ Monte Carlo trials (mean $\pm$ standard deviation). Failure to converge is denoted by ``---''; a normalized cost is not reported for a method that fails to converge, since the accumulated cost along a nonconvergent trajectory is not comparable with that along a convergent one.}
		\label{tab:comparison}
		\small
		\footnotesize
		\begin{tabular}{@{}lcccccccc@{}}
			\toprule
			& \multicolumn{4}{c}{\textit{Closed-loop cost (normalized to iSCD-MPC)}} & \multicolumn{4}{c}{\textit{$\bar t_{\rm step}$ (\rm{ms})}}\\
			\cmidrule(lr){2-5}\cmidrule(lr){6-9}
			Benchmark & iSCD-MPC & iLQR & SQP & LPV-MPC & iSCD-MPC & iLQR & SQP & LPV-MPC\\
			\midrule
			Planar quadrotor (Ex.~\ref{ex:quadrotor}, $\ell=40$) & $1.000$ & $1.000$ & $1.000$ & $1.004$ & $9.13$ & $25.57$ & $776.31$ & $1.88$\\
			Nonholonomic integrator (Ex.~\ref{ex:brockett}, $\ell=100$) & $1.000$ & $0.609$ & $0.608$ & --- & $44.11$ & $29.66$ & $682.69$ & $7.83$\\
			Output-dep.\ effectiveness (Ex.~\ref{ex:triple_int}, $\ell=40$) & $1.000$ & $1.029$ & $1.000$ & $2.264$ & $4.73$ & $49.53$ & $360.10$ & $2.20$\\
			\bottomrule
		\end{tabular}
	\end{table*}
	
	\begin{table}[t]
		\centering
		\caption{Validation of the analytical ratios \eqref{eq:ratio_SQP} and \eqref{eq:ratio_LPV}: empirical timing ratios versus predictions. The column $\bar t_{\rm iSCD}/\bar t_{\rm LPV}$ compares computation only and is therefore reported on every benchmark, including the nonholonomic integrator, on which LPV-MPC runs at the stated cost per step but does not converge. The iteration count $\bar\rho_k$ on the nonholonomic benchmark is larger than on the quadrotor because the driftless dynamics ($A=I_3$) make the reduced Hessian nearly singular, so more SCDC iterations are taken before the increment falls below the tolerance.}
		\label{tab:complexity_validation}
		\footnotesize
		\begin{tabular}{@{}lccc@{}}
			\toprule
			Benchmark & $\bar t_{\rm SQP}/\bar t_{\rm iSCD}$ & $\bar t_{\rm iSCD}/\bar t_{\rm LPV}$ & $\bar\rho_k$\\
			\midrule
			Planar quadrotor ($\ell=40$) & $85.1$ & $4.9$ & $2.44$\\
			Nonholonomic int.\ ($\ell=100$) & $15.5$ & $5.6$ & $6.08$\\
			Output-dep.\ effect.\ ($\ell=40$) & $76.2$ & $2.1$ & $2.93$\\
			\bottomrule
		\end{tabular}
	\end{table}
	
	\subsection{Nonholonomic Integrator: Failure of Frozen Coefficients}
	\label{sec:ex_brockett}
	
	\begin{example}
		\label{ex:brockett}
		{\it Nonholonomic integrator with control saturation.}
		\rm
		Consider the nonholonomic integrator
		\begin{equation}
			\dot x = \matl 1 & 0\\ 0 & 1\\ -x_{(2)} & x_{(1)}\matr \sigma(u),\label{eq:NH_dyn}
		\end{equation}
		with componentwise saturation $\sigma(u)_{(s)}=\max\{-1,\min\{u_{(s)},1\}\}$, $s\in\{1,2\}$.
		The system \eqref{eq:NH_dyn} is the normal form, under the chained-form transformation of \cite{murray1993nonholonomic}, of the kinematics of the differential-drive mobile robot shown in Figure~\ref{fig:robot_schematic}; consequently, parking the robot at a prescribed pose is equivalent to driving \eqref{eq:NH_dyn} to the origin.
		By Brockett's theorem \cite{brockett1983asymptotic}, no continuous time-invariant feedback stabilizes \eqref{eq:NH_dyn}, and the linearization at every point with $x_{(1)}=x_{(2)}=0$ is not stabilizable in the $x_{(3)}$ direction; stabilization of \eqref{eq:NH_dyn} is a standard stress test for nonlinear control methods \cite{bloch1992control,kolmanovsky1995developments,bloch1996stabilization,hespanha1999stabilization,modin2020makes}.

		\begin{figure}[t!]
			\centering
			\begin{tikzpicture}[scale=1.1, >=Stealth]
				
				% --- Coordinate Axes ---
				\draw[->, thick] (-0.4,0) -- (3.6,0) node[below] { $X$};
				\draw[->, thick] (0,-0.4) -- (0,2.6) node[left] { $Y$};
				
				% --- Robot Center & Projections (drawn below chassis) ---
				\coordinate (R) at (2.0, 1.3);
				\draw[dashed, thin, black!50] (R) -- (2.0, 0) node[below, black] { $X_{\rm r}$};
				\draw[dashed, thin, black!50] (R) -- (0, 1.3) node[left, black] { $Y_{\rm r}$};
				\draw[dashed, thin, black!50] (R) -- (3.3, 1.3);
				
				% --- Robot Scope ---
				\begin{scope}[shift={(R)}, rotate=30]
					% Main chassis (opaque white fill occludes underlying dashed lines)
					\draw[thick, fill=white, rounded corners=3pt] (-0.48,-0.32) rectangle (0.48,0.32);
					
					% Coaxial drive wheels (symmetric about the axle x = 0)
					\draw[thick, fill=black!75, rounded corners=1pt] (-0.22, 0.28) rectangle (0.22, 0.40);
					\draw[thick, fill=black!75, rounded corners=1pt] (-0.22,-0.40) rectangle (0.22,-0.28);
					
					% Center axle and longitudinal reference lines
					\draw[thin, black!25] (-0.35,0) -- (0.35,0);
					\draw[thin, black!25] (0,-0.25) -- (0,0.25);
					\fill[black] (0,0) circle (1.5pt);
					
					% Forward linear velocity v
					\draw[->, thick, blue!75!black] (0,0) -- (1.15,0) node[right=-1pt] { $v$};
					
					% Angular turning rate omega (concentric circular arc)
					\draw[->, thick, red!75!black] (25:0.62) arc (25:115:0.62) node[midway, above right=-2pt] { $\omega$};
				\end{scope}
				
				% --- Heading Angle \psi ---
				\draw[->, thick] (R) ++(0.85,0) arc (0:30:0.85);
				\node at ($(R) + (15:1.08)$) { $\psi$};
				
			\end{tikzpicture}
			\caption{Example~\ref{ex:brockett}. Differential-drive mobile robot with forward speed $v$ and turning rate $\omega$; the chained-form transformation of the pose $(X_{\rm r},Y_{\rm r},\psi)$ yields the nonholonomic integrator \eqref{eq:NH_dyn} \cite{murray1993nonholonomic}.}
			\label{fig:robot_schematic}
		\end{figure}
		
		The Euler discretization of \eqref{eq:NH_dyn} combined with \eqref{eq:sat_SCDC} yields the SCDC representation \eqref{eq:fSCDC} with
		\begin{equation*}
			A(x,u)=I_3,\qquad B(x,u) = \Ts\matl 1&0\\0&1\\-x_{(2)}&x_{(1)}\matr D(u).
		\end{equation*}
		The frozen pair $(I_3,B(x,u))$ is not stabilizable at any point with $x_{(1)}=x_{(2)}=0$, since the third row of $B$ vanishes there while $A=I_3$; hence, \ref{A:stabilizable} fails, and the terminal-set construction of Proposition~\ref{prop:terminal_set} is unavailable.
		Accordingly, following Remark~\ref{rem:terminal_equality}, we use the terminal equality constraint $x_{k,\ell}=0$, implemented through its exact-penalty relaxation as the terminal weighting $P=10^4 I_3$, which dominates the stage weightings by three orders of magnitude.
		Along a nonconstant predicted trajectory, however, the time-varying pair $\{(I_3,B_{k,j|i})\}_{j=1}^{\ell-1}$ generated by the rollout \eqref{eq:xkj+1i} is controllable in the $x_{(3)}$ direction, since the third row of $B_{k,j|i}$ is nonzero whenever $x_{k,j|i}$ leaves the set $x_{(1)}=x_{(2)}=0$.
		Freezing the pair at the current operating point removes this dependence on $j$ and therefore removes that controllability, which is the structural difference between the two methods on this benchmark.
		
		The simulations use $\Ts=0.05$ s, $\ell=100$ (i.e., 5-s prediction window), $\rho=30$, $\varepsilon=10^{-4}$, $Q=\diag(1,1,10)$, $R=0.1 I_2$, $u_0=10^{-3}\matl1&1\matr^\rmT$, and the initial condition $x_0=\matl 0.2&0.2&2\matr^\rmT$, for which the dominant error lies in the direction $x_{(3)}$ that is uncontrollable through the frozen coefficients.
		Figure~\ref{fig:NH_traj} shows the closed-loop trajectories: iSCD-MPC drives the state to the origin in approximately $5$ s with saturated bang-like controls, whereas LPV-MPC drives $x_{(1)}$ and $x_{(2)}$ to zero while stalling at $x_{(3)}\approx0.28$, a structural failure predicted by the loss of stabilizability of the frozen pair; iLQR and SQP also converge, at the per-step costs reported in Table~\ref{tab:comparison}.
		On this benchmark, iSCD-MPC does not attain the lowest closed-loop cost: Table~\ref{tab:comparison} shows that iLQR and SQP reach a trajectory whose cost is approximately $39\%$ lower.
		This gap is a consequence of the approximation quantified in Proposition~\ref{prop:kkt_gap} rather than of a failure of the algorithm.
		Since ${\rm OCP}_k$ is nonconvex here, its stationary points are not unique, and the fixed point of the SCDC iteration is a stationary point of the frozen-coefficient problem whose first-order residual, by \eqref{eq:KKTgap}, is $O(\|x_{k,1}\|^2)$; far from the origin, where the parking maneuver is executed, this residual is not negligible, and the derivative-based methods locate a lower-cost stationary point.
		The advantage demonstrated by this benchmark is therefore structural rather than quantitative: iSCD-MPC recovers controllability of $x_{(3)}$ that frozen-coefficient LPV-MPC irrecoverably discards, and it does so without evaluating a single Jacobian, at the price of a suboptimality gap relative to derivative-based methods.
		We turn finally to Proposition~\ref{prop:doa_expansion}.
		Reporting the convergence region at a few fixed horizons is uninformative on this benchmark, because the terminal equality constraint is reached from every gridpoint of any reasonable range once $\ell\ge8$, so that the regions coincide.
		We therefore define instead, for each initial condition, the smallest horizon that stabilizes it, by
		\begin{equation}
			\ell_{\min}(z_0)\triangleq \min\{\ell\in\SL\colon z_0\in\SF_\ell\},
			\label{eq:ellmin}
		\end{equation}
		where $\SL\triangleq\{2,3,4,5,6,8\}$ is the set of tested horizons and membership in $\SF_\ell$ is declared numerically by $\|x_N\|<10^{-2}$ with $N=400$.
		The map \eqref{eq:ellmin} carries strictly more information than the binary regions, since Proposition~\ref{prop:doa_expansion} implies that $\SF_\ell$ is exactly the sublevel set $\{z_0\colon\ell_{\min}(z_0)\le\ell\}$; the monotonicity $\SF_{\ell_1}\subseteq\SF_{\ell_2}$ is therefore visible as the nesting of the level sets of $\ell_{\min}$, and the level sets themselves quantify how much horizon each initial condition requires.
		Figure~\ref{fig:NH_DOA} displays $\ell_{\min}$ over the grid $x_{(1)}(0)=x_{(2)}(0)\in[-3,3]$ and $x_{(3)}(0)\in[-4,4]$, showing the nested level sets $\SF_2$, $\SF_3$, and $\SF_4$.
		Over the $21\times21$ grid, the level sets contain $33$, $343$, $427$, and $441$ gridpoints at $\ell=2$, $3$, $4$, and $5$, respectively, and no further growth occurs beyond $\ell=5$; the nesting is therefore strict over $\ell\in\{2,3,4,5\}$ and the whole grid lies in $\SF_5$.
		The required horizon is largest near $x_{(1)}(0)=x_{(2)}(0)=0$, which is precisely the set on which the third row of $B(x,u)$ vanishes and along which the rollout needs the most steps to recover controllability of $x_{(3)}$.
		This is the same mechanism that defeats LPV-MPC, now resolved in the horizon rather than in time: the shorter the horizon, the less the rollout can vary the coefficients, and the smaller the set of initial conditions from which the terminal constraint is reachable.
		\exmark
	\end{example}
	
	\begin{figure}[t!]
		\centering
		
		{\centering \includegraphics[width=\columnwidth]{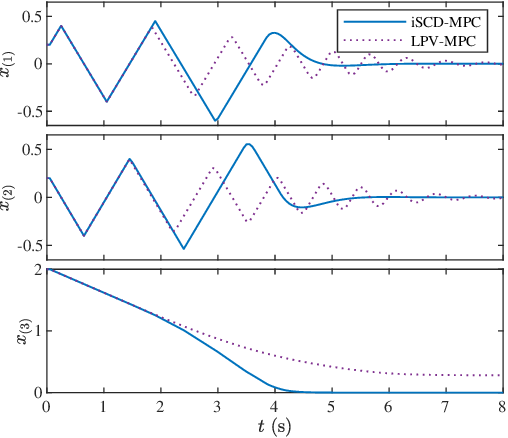}}
		\caption{Example~\ref{ex:brockett}. State trajectories and saturated controls for the nonholonomic integrator: iSCD-MPC (solid) converges to the origin, whereas LPV-MPC (dashed) stalls at $x_{(3)}\approx0.28$ due to the loss of stabilizability of the frozen coefficients.}
		\label{fig:NH_traj}
	\end{figure}
	
	\begin{figure}[t!]
		\centering
		
		\includegraphics[width=\columnwidth]{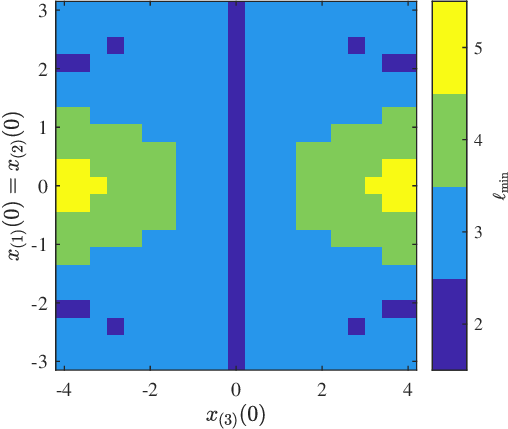}
		\caption{Example~\ref{ex:brockett}. Empirical validation of Proposition~\ref{prop:doa_expansion}: the minimum stabilizing horizon $\ell_{\min}$ of \eqref{eq:ellmin} over the $(x_{(1)}(0),x_{(3)}(0))$ plane, shown on a discrete color scale with one color per attained value of $\ell_{\min}$, so that the color boundaries are exactly the boundaries of the nested level sets $\SF_2$, $\SF_3$, and $\SF_4$. The feasible set at horizon $\ell$ is the sublevel set $\{\ell_{\min}\le\ell\}$, so the nesting of the level sets is the monotonicity of Proposition~\ref{prop:doa_expansion}. The required horizon is largest near $x_{(1)}(0)=x_{(2)}(0)=0$, where the frozen input matrix loses the $x_{(3)}$ direction.}
		\label{fig:NH_DOA}
	\end{figure}
	
	\subsection{Output Feedback of a Nonminimum-Phase Plant with Output-Dependent Actuator Effectiveness}
	\label{sec:ex_triple}
	
	\begin{example}
		\label{ex:triple_int}
		{\it Output-feedback control of a nonminimum-phase plant with output-dependent actuator effectiveness and asymmetric saturation.}
		\rm
		Consider the continuous-time triple integrator
		\begin{equation}
			\dot x = \matl 0&1&0\\0&0&1\\0&0&0\matr x+\matl 0\\0\\1\matr\sigma(u),\qquad y = \matl 1&0&0\matr x,\label{eq:triple_dyn}
		\end{equation}
		with asymmetric saturation levels $u_{\min}=-1$ and $u_{\max}=2$, whose unsaturated transfer function is $G_\rmc(s)=1/s^3$.
		The chain of integrators with bounded controls is a canonical benchmark for saturated stabilization \cite{teel1992global,kamaldar2021dynamic}.
		With zero-order-hold input and sampling period $\Ts$, the discrete-time transfer function is
		\begin{equation}
			G(\bfq) = \frac{\Ts^3}{6}\,\frac{\bfq^2+4\bfq+1}{(\bfq-1)^3},\label{eq:Gqq}
		\end{equation}
		whose numerator is independent of $\Ts$ up to the scalar gain $\Ts^3/6$, so that the sampling zeros are the same for every sampling period.
		Thus \eqref{eq:Gqq} has a pole of multiplicity three on the unit circle and sampling zeros at $\bfq=-2\pm\sqrt 3$, one of which, $-2-\sqrt3\approx-3.73$, lies outside the unit circle, so that \eqref{eq:Gqq} is nonminimum phase \cite{aastrom2013computer}.
		Since the state of \eqref{eq:triple_dyn} is not measured, we apply the output-feedback architecture of Section~\ref{sec:OF}.
		Expanding \eqref{eq:Gqq} and applying \eqref{eq:sat_SCDC} yields \eqref{eq:yyy} with $n=3$, $p=m=1$, the constant coefficients
		\begin{equation}
			F_{1,k}=-3,\qquad F_{2,k}=3,\qquad F_{3,k}=-1,\label{eq:Fcoef}
		\end{equation}
		and the control-dependent coefficients
		\begin{equation}
			G_{\tau,k}=\frac{\Ts^3}{6}\,\frac{\nu_\tau\,\sigma(u_{k-\tau})}{u_{k-\tau}},\quad \tau\in\{1,2,3\},\ k\in\BBN,\label{eq:Gcoef}
		\end{equation}
		where $(\nu_1,\nu_2,\nu_3)=(1,4,1)$ are the coefficients of the numerator of \eqref{eq:Gqq}.
		The BOCF state \eqref{eq:eta1}--\eqref{eq:eta2} is computed exactly from $(y_{k-1},y_{k-2},y_{k-3},u_{k-1},u_{k-2},u_{k-3})$, providing deadbeat reconstruction, and Proposition~\ref{prop:bocf_stability} guarantees that $y_k\to 0$ as $k\to\infty$.
		
		The actuator is driven through a valve whose underlap reduces its effectiveness near null, so that the incremental gain from the commanded input to the delivered input depends on the measured output.
		We model this by the effectiveness function $\chi\colon\BBR\to(0,1]$ defined by
		\begin{equation}
			\chi(y)\triangleq \chi_\infty + (1-\chi_\infty)\frac{(y/y_{\rm s})^2}{1+(y/y_{\rm s})^2},
			\label{eq:chi_def}
		\end{equation}
		where $\chi_\infty\in(0,1]$ is the effectiveness at the origin and $y_{\rm s}\in(0,\infty)$ is the output scale over which full effectiveness is recovered, and by replacing \eqref{eq:Gcoef} with the state- and control-dependent coefficients
		\begin{equation}
			G_{\tau,k}=\frac{\Ts^3}{6}\,\nu_\tau\,\chi(y_{k-1})\,\frac{\sigma(u_{k-\tau})}{u_{k-\tau}},\quad \tau\in\{1,2,3\},\ k\in\BBN.
			\label{eq:Gcoef_eff}
		\end{equation}
		Three properties of \eqref{eq:chi_def}--\eqref{eq:Gcoef_eff} are worth recording.
		First, $\chi$ is smooth and takes values in $[\chi_\infty,1]$, so that $B$ is bounded and globally Lipschitz and \ref{A:lipschitz} and \ref{A:bounded} hold on all of $\BBR^n\times\BBR^m$.
		Second, since $\chi$ depends on $y_{k-1}$ alone, \eqref{eq:Gcoef_eff} is of the form \eqref{eq:Gk_def}, and $G_{\tau,k+1}$ is an explicit function of $y_k=Cx_k$; hence $B(x_k,u_k)$ in \eqref{eq:BC_BOCF} is an explicit function of the BOCF state and the control, as \eqref{eq:fSCDC} requires.
		Third, the coefficients \eqref{eq:Fcoef} are unchanged, so $A$ is constant, the denominator of \eqref{eq:Gqq} is unchanged, and the sampling zeros remain at $\bfq=-2\pm\sqrt3$; the plant is therefore nonminimum phase for every $\chi_\infty$, and $\chi\equiv1$ recovers the triple integrator \eqref{eq:triple_dyn} exactly.
		Since $A$ is constant and $B$ varies with the output, this benchmark isolates the effect of refreezing the coefficients along the predicted trajectory: LPV-MPC commits to the effectiveness at the current output over the entire horizon, whereas iSCD-MPC updates it along the rollout.
		
		The simulations use $y(0)=300$, $\Ts=0.5$ s, $\ell=40$ (i.e., a $20$-s prediction window), $\rho=30$, $\varepsilon=10^{-3}$, $Q=10^{3}I_3$, $R=1$, $\chi_\infty=0.15$, $y_{\rm s}=100$, and a simulation interval of $150$ s.
		The output is held at $y(0)$ with $u=0$ before the first step, so that the initial BOCF state \eqref{eq:eta1}--\eqref{eq:eta2} is $\matl y(0)&-2y(0)&y(0)\matr^\rmT$.
		Unlike Examples~\ref{ex:quadrotor} and \ref{ex:brockett}, which are sampled-data benchmarks, the plant here is the discrete-time input-output system \eqref{eq:yyy} itself, which is the class for which Section~\ref{sec:OF} is stated; the BOCF reconstruction \eqref{eq:eta1}--\eqref{eq:eta2} is therefore exact and Proposition~\ref{prop:bocf_stability} applies without a reconstruction-error term.
		The interval is longer than in the other examples because $\chi_\infty=0.15$ leaves only $15\%$ of the nominal actuator effectiveness near the setpoint, so the final approach is necessarily slow for every method.
		The sampling period and horizon require comment.
		A prediction window of roughly $20$ s is necessary on this benchmark, because the control authority is bounded and the plant is a triple integrator: with a window of a few seconds the controller cannot see far enough ahead to begin braking, and the closed loop overshoots and diverges.
		Realizing that window at $\Ts=0.1$ s requires $\ell=200$, for which the prediction of a triple integrator spans roughly seven orders of magnitude in $\|x_{k,j}\|$ over $k\in\BBN$ and $j\in\{1,\ldots,\ell\}$ and the reduced Hessian \eqref{eq:Hessian} has condition number of order $10^{13}$; the resulting QP is not solvable in double precision, and both dense and sparse solvers fail on it.
		Since the numerator of \eqref{eq:Gqq} is independent of $\Ts$, the same $20$-s window is obtained at $\Ts=0.5$ s with $\ell=40$ on a plant with identical sampling zeros, for which the QP is well conditioned.
		Since the BOCF realization \eqref{eq:xk1_OF} is of the SCDC form \eqref{eq:fSCDC}, all four methods of Section~\ref{sec:setup} are applied to the same realization and the same reconstructed state \eqref{eq:eta1}--\eqref{eq:eta2}, so that the comparison isolates the optimization method rather than the state-reconstruction architecture.
		Figure~\ref{fig:triple_traj} shows the closed-loop output and control.
		iSCD-MPC drives the output into a $2\%$ band in $43.0$ s and SQP does so in $42.5$ s, at essentially identical closed-loop cost; iLQR is slower, at $58.0$ s and $2.9\%$ higher cost.
		LPV-MPC does not settle within the $150$-s interval and incurs a cost $126\%$ higher than iSCD-MPC.
		The reason is visible in the figure: the effectiveness frozen at the current output is not representative of the effectiveness encountered over a $20$-s window along which $y_k$ traverses the range on which $\chi$ varies, and the resulting mismatch sustains an oscillation that the trajectory-dependent coefficients of iSCD-MPC avoid.
		The comparison with SQP separates accuracy from cost on this benchmark: the two methods reach the same closed-loop cost to four significant figures, and the measured per-step time of iSCD-MPC is smaller by a factor of $76.2$.
		All methods respect the asymmetric saturation throughout, and the transient undershoot in $y_k$ is consistent with the fundamental limitation imposed by the exterior sampling zero of \eqref{eq:Gqq}.
		The closed-loop costs and per-step computation times are reported in Tables~\ref{tab:comparison} and \ref{tab:complexity_validation}; this benchmark carries the longest horizon of the three, so it provides the sharpest test of the horizon scaling \eqref{eq:ratio_SQP}.
		Setting $\chi_\infty=1$ in \eqref{eq:chi_def} reduces the plant to the triple integrator, for which $A$ and $B$ are both constant on the feasible set of \eqref{eq:QP_box}, ${\rm OCP}_k$ is a convex QP with a unique minimizer, and all four methods return the same control sequence.
		That degenerate case is the reason the nonlinearity \eqref{eq:chi_def} is introduced at all: without it the benchmark exercises the output-feedback architecture but says nothing about the iteration, since a single QP already delivers the exact minimizer.
		\exmark
	\end{example}
	
	\begin{figure}[t!]
		\centering
		
		{\centering \includegraphics[width=\columnwidth]{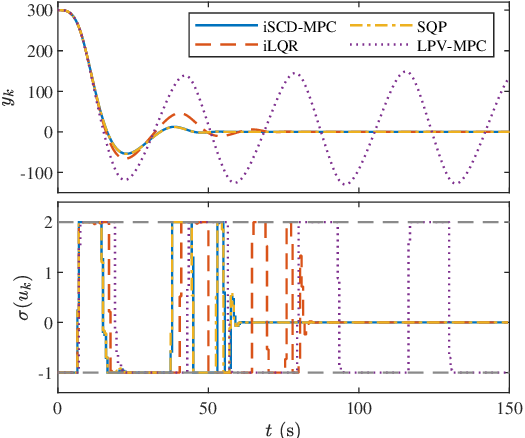}}
		\caption{Example~\ref{ex:triple_int}. Output-feedback stabilization of the nonminimum-phase plant with output-dependent actuator effectiveness \eqref{eq:chi_def} under BOCF reconstruction \eqref{eq:eta1}--\eqref{eq:eta2}, illustrating Proposition~\ref{prop:bocf_stability}. All four methods are applied to the same BOCF realization. LPV-MPC commits to the effectiveness at the current output over the entire horizon, whereas iSCD-MPC updates it along the rollout.}
		\label{fig:triple_traj}
	\end{figure}
	
	\subsection{Horizon Scaling}
	\label{sec:scaling}
	
	The three benchmarks differ in $n$, $m$, and in the nonlinearity of $f$, so the per-benchmark timings of Tables~\ref{tab:comparison} and \ref{tab:complexity_validation} cannot isolate the dependence on $\ell$ that \eqref{eq:ratio_SQP} and \eqref{eq:ratio_LPV} predict.
	This subsection fixes the quadrotor of Example~\ref{ex:quadrotor} and varies only the horizon, taking $\ell\in\{10,20,40,80,160\}$ and reporting the median per-step computation time over the first $15$ control steps.
	Assuming $\bar t_{\rm step}$ is proportional to $\ell^{\,p}$, the exponent $p$ is estimated by least squares on $\log\bar t_{\rm step}$ against $\log\ell$.
	
	One implementation detail governs what this experiment can show.
	Statement \ref{cx:iscd} of Proposition~\ref{prop:complexity} rests on the block-banded structure of the prediction, which survives only if the predicted states are retained as decision variables and the sparsity is passed to the solver.
	Condensing the states out, as in \eqref{eq:Hmap}, produces a dense reduced Hessian of order $m(\ell-1)$ and a per-step cost cubic in $\ell$, so a condensed implementation cannot attain \ref{cx:iscd} however the coefficients are frozen.
	We therefore time both variants: the condensed formulation used in Examples~\ref{ex:quadrotor}--\ref{ex:triple_int}, for which we expect $p\approx3$, and the structured formulation, for which \ref{cx:iscd} predicts $p\approx1$.
	Figure~\ref{fig:horizon_scaling} reports the measurements.
	The fitted exponents are $p_{\rm iSCD,str}=0.83$, $p_{\rm iSCD,cond}=1.90$, $p_{\rm iLQR}=0.93$, $p_{\rm SQP}=1.04$, and $p_{\rm LPV}=1.74$, and they must be read together with the regime in which each cost model applies.
	
	The structured implementation is the one to which \ref{cx:iscd} applies, and its exponent $0.83$ agrees with the predicted value of unity; the local exponents over successive doublings are $0.64$, $0.64$, $1.01$, and $1.04$, so the agreement is uniform over the range rather than an artifact of the fit.
	The exponent $0.93$ for iLQR likewise matches the leading term of \ref{cx:ilqr}, as it must, since the backward Riccati pass factors the same block-banded structure.
	These exponents also confirm that the iteration count does not grow with the horizon.
	Since \ref{cx:iscd} is the product of the per-iteration cost with $\rho_k$, an exponent near unity is attainable only if $\bar\rho_k$ is bounded independently of $\ell$; the measured mean iteration counts are flat across $\ell\in\{10,20,40,80,160\}$, so the horizon enters the per-step cost through the linear algebra alone.
	This is consistent with Proposition~\ref{prop:iteration_bound}, in which $\ell$ appears only through $\|U_{k|2}-U_{k|1}\|$, and with the fact that the warm start \eqref{eq:warm_start} appends the terminal control law, which is near-optimal on the added stages once the predicted trajectory has entered $\SX_\rmf$.
	The condensed implementation departs from \ref{cx:iscd} once the dense factorization dominates: its local exponents are $0.67$, $1.20$, and $1.60$ over the first three doublings and $4.65$ over the last, and its per-step time rises from $29.08$ ms at $\ell=80$ to $730.39$ ms at $\ell=160$, against $71.36$ ms for the structured variant at the same horizon.
	The two implementations differ only in whether the block-banded structure of \eqref{eq:Hessian} is retained, so this tenfold separation measures directly the distinction drawn at the end of Section~\ref{sec:complexity}.
	
	Two of the measurements do not test what \eqref{eq:ratio_SQP} and \eqref{eq:ratio_LPV} assert, and we record why rather than read them as refutations.
	The SQP timings are not monotone in $\ell$, falling from $1443.40$ ms at $\ell=40$ to $446.09$ ms at $\ell=80$ before rising again, which gives the local exponents $2.46$, $2.26$, $-1.69$, and $1.89$.
	A negative exponent cannot be a per-iteration cost, and its appearance shows that the measured quantity is the product of the per-iteration cost with an outer-iteration count that varies with $\ell$; convention \ref{C:ipm}, under which that count is independent of $\ell$, does not hold for {\tt fmincon}, whose termination is governed by the conditioning of the subproblem.
	Isolating the $O(\ell^2)$ ratio of \eqref{eq:ratio_SQP} would require an implementation performing a fixed number of outer iterations, such as the real-time iteration scheme \cite{diehl2002real}.
	The LPV-MPC point at $\ell=160$ is likewise not a cost measurement, since the single condensed quadratic program reached the solver iteration limit at that horizon, so that the reported $97.65$ ms records a solver failure.
	Over $\ell\in\{10,20,40,80\}$, where the LPV-MPC solve is uncontaminated, the fitted exponent is $1.13$, consistent with \ref{cx:lpv}, and the ratio $\bar t_{\rm iSCD}/\bar t_{\rm LPV}$ takes the values $11.78$, $11.04$, $8.92$, and $5.26$, decreasing by a factor of $2.2$ over the range.
	Equation \eqref{eq:ratio_LPV} predicts a ratio that does not grow with $\ell$, with which these values are consistent; they do not, however, establish that the ratio is constant, and the decrease is larger than the run-to-run scatter discussed in Section~\ref{sec:discussion}.
	
	\begin{figure}[t!]
		\centering
		{\centering \includegraphics[width=\columnwidth]{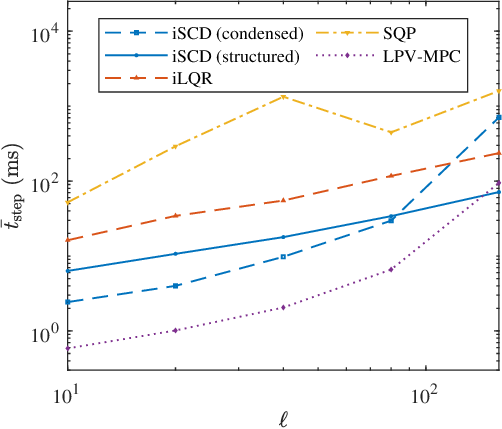}}
		\caption{Horizon scaling on the quadrotor of Example~\ref{ex:quadrotor}: median per-step computation time versus $\ell$ on logarithmic axes. The condensed and structured iSCD-MPC implementations differ only in whether the block-banded structure of the prediction is exploited; statement \ref{cx:iscd} of Proposition~\ref{prop:complexity} applies to the structured variant.}
		\label{fig:horizon_scaling}
	\end{figure}
	
	\subsection{Discussion}
	\label{sec:discussion}
	
	The three benchmarks collectively exercise every result of Sections~\ref{sec:theory} and \ref{sec:complexity}.
	The quadrotor benchmark validates the contraction estimate, the iteration bound, the geometric cost convergence, and the boundedness of the adapted terminal penalty, and quantifies the closed-loop-cost and computation-time comparison with iLQR, SQP, and LPV-MPC.
	The nonholonomic-integrator benchmark isolates the structural advantage of trajectory-dependent coefficients: LPV-MPC fails in the direction rendered uncontrollable by the frozen coefficients, whereas iSCD-MPC converges, and the empirical convergence regions are consistent with the horizon monotonicity of Proposition~\ref{prop:doa_expansion}.
	The output-feedback benchmark validates the BOCF architecture on a nonminimum-phase sampled-data plant with asymmetric saturation and output-dependent actuator effectiveness; carrying the longest horizon of the three, it provides the sharpest test of the horizon scaling \eqref{eq:ratio_SQP}, and it separates iSCD-MPC from LPV-MPC through the state dependence of the input matrix rather than of the dynamics matrix.
	The timings in Tables~\ref{tab:comparison} and \ref{tab:complexity_validation} show iSCD-MPC to be one to two orders of magnitude faster per step than dense SQP on every benchmark, but they should not be read across benchmarks as a test of the horizon scaling \eqref{eq:ratio_SQP}: the measured ratio $\bar t_{\rm SQP}/\bar t_{\rm iSCD}$ is $85.1$ at $\ell=40$ on the quadrotor and $15.5$ at $\ell=100$ on the nonholonomic integrator, whereas \eqref{eq:ratio_SQP} would predict growth by a factor of $(100/40)^2$.
	The comparison is confounded because $n$, $m$, $f$, and the number of outer iterations taken by {\tt fmincon} all change from one benchmark to the next, while \eqref{eq:ratio_SQP} holds for a fixed system as $\ell$ alone varies.
	Section~\ref{sec:scaling} therefore performs the controlled experiment.
	The overhead relative to LPV-MPC is of the order of the average iteration count, as \eqref{eq:ratio_LPV} predicts, but the two are not in a fixed relation: $\bar t_{\rm iSCD}/\bar t_{\rm LPV}=4.9$ against $\bar\rho_k=2.44$ on the quadrotor and $5.6$ against $6.08$ on the nonholonomic integrator.
	Two effects work in opposite directions.
	Each iSCD-MPC iteration performs work that \eqref{eq:ratio_LPV} does not count, namely the rollout \eqref{eq:xkj+1i}, the re-evaluation of the SCDC matrices \eqref{eq:AB_SCDC}, and the rebuilding of \eqref{eq:Phidef}--\eqref{eq:Hessian}, which inflates the ratio; conversely, the QP solved at iteration $i\ge3$ is warm started by an iterate close to its solution and terminates in fewer interior-point steps than the single cold LPV-MPC solve, which deflates it.
	Absolute per-step times also vary appreciably between runs on a general-purpose operating system, so we regard the agreement between $\bar t_{\rm iSCD}/\bar t_{\rm LPV}$ and $\bar\rho_k$ as order-of-magnitude rather than exact.
	The discrepancy is expected: \eqref{eq:ratio_LPV} counts QP solves alone, whereas each iSCD-MPC iteration also rolls out the nonlinear prediction \eqref{eq:xkj+1i}, re-evaluates the SCDC matrices \eqref{eq:AB_SCDC}, and rebuilds the block matrices \eqref{eq:Phidef}--\eqref{eq:Hessian}, none of which LPV-MPC repeats.
	The ratio \eqref{eq:ratio_LPV} is therefore a lower bound on the measured overhead.
	Summarizing the four benchmarks, iSCD-MPC attains closed-loop costs within $0.4\%$ of iLQR and SQP on the quadrotor and equal to SQP on the output-feedback plant, is $39\%$ above them on the nonholonomic integrator, and is lower than LPV-MPC on every benchmark on which LPV-MPC converges.
	Its measured per-step time lies between that of LPV-MPC and that of iLQR on all three benchmarks.
	These outcomes are consistent with the analysis: the optimality gap of Proposition~\ref{prop:kkt_gap} accounts for the shortfall relative to derivative-based methods far from the origin, and the absence of Jacobian evaluations accounts for the reduction in per-step cost relative to iLQR and SQP.
	
	\section{Conclusions}
	\label{sec:conclusions}
	
	This paper introduced iterative state- and control-dependent model predictive control (iSCD-MPC). By refreezing the pseudo-linear coefficient matrices along the previously predicted trajectory, the algorithm solves a sequence of constrained linear-quadratic programs at each control update without evaluating plant Jacobians.
	
	We proved the iteration contracts locally to a unique fixed point, bounded the required iteration count explicitly, and established the geometric convergence of the predicted cost. We quantified the first-order optimality gap of the fixed point, demonstrating that the residual vanishes quadratically near the origin. The proposed online terminal-penalty adaptation remains uniformly bounded, providing the mathematical foundation to guarantee recursive feasibility and asymptotic stability under both optimal and early-terminated suboptimal control sequences. We extended these guarantees to encompass terminal equality constraints, bounded disturbances, and output feedback implemented through the block-observable canonical form.
	
	The operation count places the per-step cost of iSCD-MPC at $O(\ell)$ in the horizon length, which is the scaling of the iterative linear quadratic regulator and is two orders in $\ell$ below that of dense sequential quadratic programming, and it does so without evaluating derivatives of the dynamics.
	The numerical study confirms the $O(\ell)$ scaling for the structured implementation and identifies the conditions under which the remaining predicted ratios are not observed.
	Refreezing the coefficients along the predicted trajectory also allows the horizon to be used on systems for which the frozen pair loses stabilizability at the operating point, as the nonholonomic integrator illustrates.
	
	Future research will expand the framework to address stochastic environments and broader classes of nonlinear systems. To handle measurement maps lacking exact deadbeat reconstruction, we will integrate the controller with state- and control-dependent moving-horizon estimation \cite{kamaldarSCDMHE}. We will also develop tube- and scenario-based formulations to robustify the algorithm against stochastic disturbances \cite{mayne2005robust,mayne2011tube,rakovic2005invariant}. On the theoretical front, quantifying the domain of attraction as a function of the horizon length remains an open priority \cite{damm2014exponential}. Finally, treating the nonunique SCDC factorization as an offline design variable will allow us to minimize the Lipschitz constants that govern both the contraction rate \eqref{eq:xk1_small} and the optimality gap \eqref{eq:KKTgap}.
	
	\bibliographystyle{unsrt}
	\bibliography{Ref}

\begin{thebibliography}{10}

\bibitem{keerthi1988optimal}
S.~S. Keerthi and E.~G. Gilbert.
\newblock Optimal infinite-horizon feedback laws for a general class of
  constrained discrete-time systems: Stability and moving-horizon
  approximations.
\newblock {\em J. Optim. Theory Appl.}, 57(2):265--293, 1988.

\bibitem{kwon2006receding}
W.~Kwon and S.~Han.
\newblock {\em Receding Horizon Control: Model Predictive Control for State
  Models}.
\newblock Springer, 2006.

\bibitem{camacho2013model}
E.~F. Camacho and C.~Bordons.
\newblock {\em Model Predictive Control}.
\newblock Springer, 2nd edition, 2007.

\bibitem{mayne2000constrained}
David~Q. Mayne, James~B. Rawlings, Christopher~V. Rao, and Pierre O.~M.
  Scokaert.
\newblock Constrained model predictive control: Stability and optimality.
\newblock {\em Automatica}, 36(6):789--814, 2000.

\bibitem{rawlings2017model}
James~B. Rawlings, David~Q. Mayne, and Moritz Diehl.
\newblock {\em Model Predictive Control: Theory, Computation, and Design}.
\newblock Nob Hill Publishing, Madison, WI, 2nd edition, 2017.

\bibitem{eren2017model}
U.~Eren, A.~Prach, B.~B. Ko{\c{c}}er, S.~V. Rakovi{\'c}, E.~Kayacan, and
  B.~A{\c{c}}{\i}kme{\c{s}}e.
\newblock Model predictive control in aerospace systems: Current state and
  opportunities.
\newblock {\em J. Guid. Contr. Dyn.}, 40(7):1541--1566, 2017.

\bibitem{cairano2007model}
S~Di Cairano, Alberto Bemporad, Ilya~V Kolmanovsky, and Davor Hrovat.
\newblock Model predictive control of magnetically actuated mass spring dampers
  for automotive applications.
\newblock {\em Int. J. Contr.}, 80(11):1701--1716, 2007.

\bibitem{qin2003survey}
S.~Joe Qin and Thomas~A. Badgwell.
\newblock A survey of industrial model predictive control technology.
\newblock {\em Contr. Eng. Pract.}, 11(7):733--764, 2003.

\bibitem{kuhne2004model}
F.~K{\"u}hne, W.~F. Lages, and J.~M.~G. da~Silva~Jr.
\newblock Model predictive control of a mobile robot using linearization.
\newblock In {\em Proc. Mechatronics Robot.}, pages 525--530, 2004.

\bibitem{mayne2014model}
David~Q. Mayne.
\newblock Model predictive control: Recent developments and future promise.
\newblock {\em Automatica}, 50(12):2967--2986, 2014.

\bibitem{allgower2012nonlinear}
Frank Allg{\"o}wer and Alex Zheng.
\newblock {\em Nonlinear Model Predictive Control}.
\newblock Birkh{\"a}user, Basel, 2012.

\bibitem{nocedal2006numerical}
Jorge Nocedal and Stephen~J. Wright.
\newblock {\em Numerical Optimization}.
\newblock Springer, New York, 2nd edition, 2006.

\bibitem{diehl2002real}
Moritz Diehl, Hans~Georg Bock, Johannes~P. Schl{\"o}der, Rolf Findeisen, Zoltan
  Nagy, and Frank Allg{\"o}wer.
\newblock Real-time optimization and nonlinear model predictive control of
  processes governed by differential-algebraic equations.
\newblock {\em J. Process Contr.}, 12(4):577--585, 2002.

\bibitem{wachter2006implementation}
Andreas W{\"a}chter and Lorenz~T. Biegler.
\newblock On the implementation of an interior-point filter line-search
  algorithm for large-scale nonlinear programming.
\newblock {\em Math. Program.}, 106(1):25--57, 2006.

\bibitem{diehl2005nominal}
Moritz Diehl, Rolf Findeisen, Frank Allg{\"o}wer, Hans~Georg Bock, and
  Johannes~P. Schl{\"o}der.
\newblock Nominal stability of real-time iteration scheme for nonlinear model
  predictive control.
\newblock {\em IEE Proc. Contr. Theory Appl.}, 152(3):296--308, 2005.

\bibitem{frison2020hpipm}
G.~Frison and M.~Diehl.
\newblock {HPIPM}: A high-performance quadratic programming framework for model
  predictive control.
\newblock {\em IFAC-PapersOnLine}, 53(2):6563--6569, 2020.

\bibitem{verschueren2022acados}
R.~Verschueren, G.~Frison, D.~Kouzoupis, J.~Frey, N.~{van Duijkeren},
  A.~Zanelli, B.~Novoselnik, T.~Albin, R.~Quirynen, and M.~Diehl.
\newblock acados---a modular open-source framework for fast embedded optimal
  control.
\newblock {\em Math. Program. Comput.}, 14(1):147--183, 2022.

\bibitem{ohtsuka2004continuation}
Toshiyuki Ohtsuka.
\newblock A continuation/{GMRES} method for fast computation of nonlinear
  receding horizon control.
\newblock {\em Automatica}, 40(4):563--574, 2004.

\bibitem{jacobson1970differential}
David~H. Jacobson and David~Q. Mayne.
\newblock {\em Differential Dynamic Programming}.
\newblock Elsevier, New York, 1970.

\bibitem{LiTodorov2004}
W.~Li and E.~Todorov.
\newblock Iterative linear quadratic regulator design for nonlinear biological
  movement systems.
\newblock In {\em ICINCO}, pages 222--229, 2004.

\bibitem{TodorovLi2005}
E.~Todorov and W.~Li.
\newblock A generalized iterative {LQG} method for locally-optimal feedback
  control of constrained nonlinear stochastic systems.
\newblock In {\em Proc. Amer. Contr. Conf.}, pages 300--306, 2005.

\bibitem{tomizukaiLQR}
J.~Chen, W.~Zhan, and M.~Tomizuka.
\newblock Constrained iterative {LQR} for on-road autonomous driving motion
  planning.
\newblock In {\em IEEE Int. Conf. Intell. Trans. Sys.}, pages 1--7, 2017.

\bibitem{cisneros2018lpv}
Pablo S.~G. Cisneros and Herbert Werner.
\newblock A dissipativity formulation for stability analysis of nonlinear and
  parameter dependent {MPC}.
\newblock {\em Automatica}, 101:246--253, 2018.

\bibitem{morato2020model}
Marcelo~M. Morato, Julio~E. Normey-Rico, and Olivier Sename.
\newblock Model predictive control design for linear parameter varying systems:
  A survey.
\newblock {\em Ann. Rev. Contr.}, 49:64--80, 2020.

\bibitem{cloutier1996nonlinear}
James~R. Cloutier, Christopher~N. D'Souza, and Curtis~P. Mracek.
\newblock Nonlinear regulation and nonlinear ${H_\infty}$ control via the
  state-dependent {Riccati} equation technique: Part 1, theory.
\newblock In {\em Proc. Int. Conf. Nonlinear Probl. Aviation Aerospace}, pages
  117--130, 1996.

\bibitem{tsiotras1996counterexample}
P.~Tsiotras, M.~Corless, and M.~Rotea.
\newblock Counterexample to a recent result on the stability of nonlinear
  systems.
\newblock {\em IMA J. Math. Contr. Info.}, 13(2):129--130, 1996.

\bibitem{mracek1998control}
C.~P. Mracek and J.~R. Cloutier.
\newblock Control designs for the nonlinear benchmark problem via the
  state-dependent {Riccati} equation method.
\newblock {\em Int. J. Robust Nonlinear Contr.}, 8(4-5):401--433, 1998.

\bibitem{findeisen2003state}
R.~Findeisen, L.~Imsland, F.~Allgower, and B.~A. Foss.
\newblock State and output feedback nonlinear model predictive control: An
  overview.
\newblock {\em Eur. J. Contr.}, 9(2-3):190--206, 2003.

\bibitem{erdem2004design}
E.~B. Erdem and A.~G. Alleyne.
\newblock Design of a class of nonlinear controllers via state dependent
  {Riccati} equations.
\newblock {\em IEEE Trans. Contr. Sys. Tech.}, 12(1):133--137, 2004.

\bibitem{ccimen2010systematic}
T.~{\c{C}}imen.
\newblock Systematic and effective design of nonlinear feedback controllers via
  the state-dependent {Riccati} equation ({SDRE}) method.
\newblock {\em Ann. Rev. Contr.}, 34(1):32--51, 2010.

\bibitem{weiss2012forward}
A.~Weiss, I.~Kolmanovsky, M.~Baldwin, R.~S. Erwin, and D.~S. Bernstein.
\newblock Forward-integration {Riccati}-based feedback control for spacecraft
  rendezvous maneuvers on elliptic orbits.
\newblock In {\em Proc. Conf. Dec. Contr.}, pages 1752--1757, 2012.

\bibitem{prach2018output}
A.~Prach, O.~Tekinalp, and D.~S. Bernstein.
\newblock Output-feedback control of linear time-varying and nonlinear systems
  using the forward propagating {Riccati} equation.
\newblock {\em J. Vib. Contr.}, 24(7):1239--1263, 2018.

\bibitem{chang2013constrained}
L.~Chang and J.~Bentsman.
\newblock Constrained discrete-time state-dependent {Riccati} equation
  technique: A model predictive control approach.
\newblock In {\em Proc. Conf. Dec. Contr.}, pages 5125--5130, 2013.

\bibitem{ru2017nonlinear}
P.~Ru and K.~Subbarao.
\newblock Nonlinear model predictive control for unmanned aerial vehicles.
\newblock {\em Aerospace}, 4(2):31, 2017.

\bibitem{kamaldarSCDMHE}
Mohammadreza Kamaldar.
\newblock Nonlinear moving-horizon estimation using state- and
  control-dependent models.
\newblock In {\em Proc. IEEE Conf. Decis. Control}, 2026.

\bibitem{brockett1983asymptotic}
R.~W. Brockett.
\newblock Asymptotic stability and feedback stabilization.
\newblock In R.~W. Brockett, R.~S. Millman, and H.~J. Sussmann, editors, {\em
  Differential Geometric Control Theory}, pages 181--191. Birkh\"{a}user, 1983.

\bibitem{Rao2003}
Christopher~V. Rao, James~B. Rawlings, and David~Q. Mayne.
\newblock Constrained state estimation for nonlinear discrete-time systems:
  stability and moving horizon approximations.
\newblock {\em IEEE Trans. Autom. Contr.}, 48(2):246--258, 2003.

\bibitem{Alessandri2008}
Angelo Alessandri, Marco Baglietto, and Giorgio Battistelli.
\newblock Moving-horizon state estimation for nonlinear discrete-time systems:
  new stability results and approximation schemes.
\newblock {\em Automatica}, 44(7):1753--1765, 2008.

\bibitem{scokaert1999suboptimal}
P.~O.~M. Scokaert, D.~Q. Mayne, and J.~B. Rawlings.
\newblock Suboptimal model predictive control (feasibility implies stability).
\newblock {\em IEEE Trans. Autom. Contr.}, 44(3):648--654, 1999.

\bibitem{polderman1989state}
J.~W. Polderman.
\newblock A state space approach to the problem of adaptive pole assignment.
\newblock {\em Math. Contr. Sig. Sys.}, 2:71--94, 1989.

\bibitem{islamPCAC}
T.~W. Nguyen, S.~A.~U. Islam, D.~S. Bernstein, and I.~V. Kolmanovsky.
\newblock Predictive cost adaptive control: A numerical investigation of
  persistency, consistency, and exigency.
\newblock {\em IEEE Contr. Sys. Mag.}, 41:64--96, 2021.

\bibitem{kamaldar2023arxiv}
Mohammadreza Kamaldar and Dennis~S. Bernstein.
\newblock Output-feedback nonlinear model predictive control with iterative
  state- and control-dependent coefficients.
\newblock arXiv:2309.11589, 2023.

\bibitem{alhazmi2025nonlinear}
Rami Alhazmi, Syed Aseem~Ul Islam, and Dennis~S. Bernstein.
\newblock Nonlinear model predictive guidance for {6DOF} pursuer/evader
  engagements.
\newblock In {\em Proc. AIAA SCITECH Forum}, page 0102, 2025.

\bibitem{alhazmi2025root}
Rami~Abdulelah Alhazmi, Syed Aseem~Ul Islam, and Dennis~S. Bernstein.
\newblock Application of root-finding methods to iterative model predictive
  control of pseudo-linear systems.
\newblock In {\em Proc. Amer. Contr. Conf.}, 2025.

\bibitem{rao1998application}
Christopher~V. Rao, Stephen~J. Wright, and James~B. Rawlings.
\newblock Application of interior-point methods to model predictive control.
\newblock {\em J. Optim. Theory Appl.}, 99(3):723--757, 1998.

\bibitem{chen1998quasi}
H.~Chen and F.~Allg{\"o}wer.
\newblock A quasi-infinite horizon nonlinear model predictive control scheme
  with guaranteed stability.
\newblock {\em Automatica}, 34(10):1205--1217, 1998.

\bibitem{Anderson1979}
Brian D.~O. Anderson and John~B. Moore.
\newblock {\em Optimal Filtering}.
\newblock Prentice-Hall, Englewood Cliffs, NJ, 1979.

\bibitem{khalil2002nonlinear}
Hassan~K. Khalil.
\newblock {\em Nonlinear Systems}.
\newblock Prentice Hall, Upper Saddle River, NJ, 3rd edition, 2002.

\bibitem{anderson1981detectability}
B.~D.~O. Anderson and J.~B. Moore.
\newblock Detectability and stabilizability of time-varying discrete-time
  linear systems.
\newblock {\em SIAM J. Contr. Optim.}, 19(1):20--32, 1981.

\bibitem{bernstein2018scalar}
Dennis~S. Bernstein.
\newblock {\em Scalar, Vector, and Matrix Mathematics: Theory, Facts, and
  Formulas}.
\newblock Princeton University Press, 2018.

\bibitem{mayne2005robust}
David~Q. Mayne, Mar{\'\i}a~M. Seron, and Sa{\v{s}}a~V. Rakovi{\'c}.
\newblock Robust model predictive control of constrained linear systems with
  bounded disturbances.
\newblock {\em Automatica}, 41(2):219--224, 2005.

\bibitem{mayne2011tube}
D.~Q. Mayne, E.~C. Kerrigan, E.~J. {van Wyk}, and P.~Falugi.
\newblock Tube-based robust nonlinear model predictive control.
\newblock {\em Int. J. Robust Nonlinear Contr.}, 21(11):1341--1353, 2011.

\bibitem{rakovic2005invariant}
Sa{\v{s}}a~V. Rakovi{\'c}, Eric~C. Kerrigan, Konstantinos~I. Kouramas, and
  David~Q. Mayne.
\newblock Invariant approximations of the minimal robust positively invariant
  set.
\newblock {\em IEEE Trans. Autom. Contr.}, 50(3):406--410, 2005.

\bibitem{giernacki2017crazyflie}
W.~Giernacki, M.~Skwierczy{\'n}ski, W.~Witwicki, P.~Wro{\'n}ski, and
  P.~Kozierski.
\newblock Crazyflie 2.0 quadrotor as a platform for research and education in
  robotics and control engineering.
\newblock In {\em Proc. Int. Conf. Methods Models Autom. Robot.}, pages 37--42,
  2017.

\bibitem{murray1993nonholonomic}
Richard~M Murray and S~Shankar Sastry.
\newblock Nonholonomic motion planning: Steering using sinusoids.
\newblock {\em IEEE Trans. Autom. Contr.}, 38(5):700--716, 1993.

\bibitem{bloch1992control}
AM~Bloch, M~Reyhanoglu, and NH~McClamroch.
\newblock Control and stabilization of nonholonomic dynamic systems.
\newblock {\em IEEE Trans. Autom. Contr.}, 37(11):1746--1757, 1992.

\bibitem{kolmanovsky1995developments}
Ilya Kolmanovsky and N~Harris McClamroch.
\newblock Developments in nonholonomic control problems.
\newblock {\em IEEE Contr. Sys. Mag.}, 15(6):20--36, 1995.

\bibitem{bloch1996stabilization}
Anthony Bloch and Sergey Drakunov.
\newblock Stabilization and tracking in the nonholonomic integrator via sliding
  modes.
\newblock {\em Sys. Contr. Lett.}, 29(2):91--99, 1996.

\bibitem{hespanha1999stabilization}
Joao~P Hespanha and A~Stephen Morse.
\newblock Stabilization of nonholonomic integrators via logic-based switching.
\newblock {\em Automatica}, 35(3):385--393, 1999.

\bibitem{modin2020makes}
K~Modin and O~Verdier.
\newblock What makes nonholonomic integrators work?
\newblock {\em Numer. Math.}, 145(2):405--435, 2020.

\bibitem{teel1992global}
Andrew~R Teel.
\newblock Global stabilization and restricted tracking for multiple integrators
  with bounded controls.
\newblock {\em Sys. Contr. Lett.}, 18(3):165--171, 1992.

\bibitem{kamaldar2021dynamic}
Mohammadreza Kamaldar and Dennis~S Bernstein.
\newblock Dynamic output-feedback control of a chain of discrete-time
  integrators with arbitrary zeros and asymmetric input saturation.
\newblock {\em Automatica}, 125:109387, 2021.

\bibitem{aastrom2013computer}
Karl~J {\AA}str{\"o}m and Bj{\"o}rn Wittenmark.
\newblock {\em Computer-Controlled Systems: Theory and Design}.
\newblock Courier Corporation, 2013.

\bibitem{damm2014exponential}
Tobias Damm, Lars Grune, Marleen Stieler, and Karl Worthmann.
\newblock An exponential turnpike theorem for dissipative discrete time optimal
  control problems.
\newblock {\em SIAM J. Contr. Optim.}, 52(3):1935--1957, 2014.

\end{thebibliography}
	
\end{document}